\documentclass[a4paper,10pt,reqno]{amsart}
\usepackage{amsmath,amsfonts,amsthm,amssymb,color}
\usepackage[T1]{fontenc}
\usepackage{pdfsync}
\usepackage{pstricks}
\usepackage{lmodern}
\usepackage[normalem]{ulem}
\usepackage{mathrsfs}
\usepackage[extra]{tipa}
\usepackage{graphicx}
\usepackage{csquotes}


\topmargin -0.4in  \headsep 0.4in  \textheight 8.8in
  \oddsidemargin 0.02in  \evensidemargin 0.15in
\textwidth 6.3in

\numberwithin{equation}{section}

\newcommand{\fouri}[1]{\mathcal{F}(#1)}

\newcommand{\vertiii}[1]{{\left\vert\kern-0.25ex\left\vert\kern-0.25ex\left\vert #1 
    \right\vert\kern-0.25ex\right\vert\kern-0.25ex\right\vert}}


\newcommand{\R}{\mathbb R}

\newcommand{\Z}{\mathbb Z}


\newcommand{\be}{\beta}

\newcommand{\1}{{\bf 1}}

\newcommand{\cb}{\mathcal B}
\newcommand{\cac}{\mathcal C}
\newcommand{\cd}{\mathcal D}

\newcommand{\cf}{\mathcal F}
\newcommand{\ch}{\mathcal H}
\newcommand{\ci}{\mathcal I}

\newcommand{\cl}{\mathcal L}

\newcommand{\cq}{\mathcal Q}
\newcommand{\cs}{\mathcal S}

\newcommand{\struc}{\mathscr T}
\newcommand{\scal}{\mathfrak{s}}
\newcommand{\compac}{\mathfrak{K}}

\newcommand{\al}{\alpha}

\newcommand{\gga}{\Gamma}

\newcommand{\ep}{\varepsilon}

\newcommand{\la}{\lambda}

\newcommand{\vp}{\varphi}


\newcommand{\lln}{\left|}
\newcommand{\rrn}{\right|}


\newtheorem{theorem}{Theorem}[section]

\newtheorem{corollary}[theorem]{Corollary}

\newtheorem{definition}[theorem]{Definition}

\newtheorem{lemma}[theorem]{Lemma}

\newtheorem{proposition}[theorem]{Proposition}

\theoremstyle{remark}
\newtheorem{remark}[theorem]{Remark}
\theoremstyle{remark}


\newcommand{\bean}{\begin{eqnarray*}}
\newcommand{\eean}{\end{eqnarray*}}
\newcommand{\ben}{\begin{enumerate}}
\newcommand{\een}{\end{enumerate}}
\newcommand{\beq}{\begin{equation}}
\newcommand{\eeq}{\end{equation}}

\begin{document}

\date{\today}



\begin{center}
{\large\textbf{
Construction and Skorohod representation of a fractional $K$-rough path
}}\\~\\
Aur\'elien Deya\footnote{Institut \'Elie Cartan, Universit\' e de Lorraine, BP 70239, 54506 Vandoeuvre-l\`es-Nancy, France. Email: {\tt aurelien.deya@univ-lorraine.fr}}
\end{center}

\bigskip

{\small \noindent {\bf Abstract:} We go ahead with the study initiated in \cite{deya} about a heat-equation model with non-linear perturbation driven by a space-time fractional noise. Using general results from Hairer's theory of regularity structures, the analysis reduces to the construction of a so-called $K$-rough path (above the noise), a notion we introduce here as a compromise between regularity structures formalism and rough paths theory. The exhibition of such a $K$-rough path at order three allows us to cover the whole roughness domain that extends up to the standard space-time white noise situation. We also provide a representation of this abstract $K$-rough path in terms of Skorohod stochastic integrals.

\bigskip

\noindent {\bf Keywords}: Stochastic PDEs; Fractional noise; Rough paths theory; Regularity structures theory; Malliavin calculus.

\bigskip

\noindent
{\bf 2000 Mathematics Subject Classification:} 60H15, 60G22, 60H07.}

\small

\section{Introduction}

The aim of this paper is to go a few steps further into the analysis of the SPDE model introduced in \cite{deya}, namely the equation
\begin{equation}\label{equation-base}
(\partial_t Y)(t,x)=(\partial^2_x Y)(t,x)+F(x,Y(t,x)) \, (\partial_t\partial_x B)(t,x) \quad , \quad Y(0,x)=\Psi(x) \ , \ t\in [0,T] \ , \ x\in \R \ ,
\end{equation}
where $F:\R \times \R \to \R$ is a quite general vector fields, $\Psi$ is a continuous function, and $\partial_t\partial_x B$ stands for a space-time fractional noise. To be more specific, $B$ is here a fractional sheet with Hurst indexes $H_1,H_2$, that is a centered Gaussian field with covariance function given by the formula
$$\mathbb{E}\big[ B(s,x) B(t,y)\big]=R_{H_1}(s,t) R_{H_2}(x,y) \ , \ \text{where} \ R_H(s,t):=\frac12 \{|s|^{2H}+|t|^{2H}- |t-s|^{2H}\} \ .$$

\smallskip

At this point, let us recall that the whole difficulty raised by this equation (at least when $H_1\neq \frac12$) lies in the fact that $B$ is not a martingale process, which rules out the possibility to study this model within the classical SPDE framework of \cite{daprato-zabczyk} or \cite{walsh}. It is then natural to turn to pathwise methods, and in fact, this equation provides us with an interesting example to test the flexibility of the theory of regularity structures - RS in the sequel - recently introduced by M. Hairer in \cite{hai-14}. The machinery has already proved to be a very powerful tool to study stochastic parabolic dynamics, as a flourishing literature can easily testify. To mention but a few applications, we can quote for instance \cite{chandra-shen,berglund-kuehn,hairer-labbe,hairer-pardoux,hairer-weber}.

\smallskip

In this context, our objective behind the study of (\ref{equation-base}) is actually manifold:

\smallskip

$\bullet$  RS theory is essentially built upon a sophisticated extension of concepts from Lyons' rough paths (RP) theory. However, based on the introductory paper \cite{hai-14}, the fundamental analogies between RP and RS theories may not be obvious to a non-initiated reader. Therefore, and in the continuity of \cite{deya}, we here propose to somehow go one step back into the formulation, by highlighting the role of an object whose definition and properties look very much like those a classical rough path: the so-called $K$-rough path (see Definition \ref{defi:k-rough-path}). Of course, as the RS-expert reader will soon realize, restricting the analysis to this sole concept of a $K$-rough path reduces the possible scope of application of RS results, in comparison with the general abstract formalism settled in \cite{hai-14}. In brief, $K$-rough paths are specifically designed for the dynamics of (\ref{equation-base}), and their definition must be reshaped when turning to other models such as KPZ or $\Phi^4_3$ equations. This being said, we think, or at least we hope, that this more straightforward presentation may help the reader to catch the very \enquote{rough-path} essence of RS theory, on a non-trivial SPDE example.

\smallskip

$\bullet$  Still focusing on the notion of a $K$-rough path, the analysis will give us the opportunity to recall how the technical tools used to study rough paths (Hölder topologies, Garsia-Rodemich-Rumsey lemma,...) extend to the parabolic framework. We will also see that the renormalization procedures, one of the main achievements of RS theory, can be very conveniently expressed in this setting.

\smallskip

$\bullet$ In \cite{deya}, the above ideas were implemented for a second-order analysis, which in fact corresponds to the situation where $2H_1+H_2 >\frac53$. We will here go one step further and consider the study of the model up to third order, which covers the case $2H_1+H_2>\frac32$. This extension will therefore give us the opportunity to go deeper into the exploration of the concept of a $K$-rough path. What also makes this work important to us is that it makes the link with the classical space-time white noise situation, for which $H_1=H_2=\frac12$ (and accordingly $2H_1+H_2=\frac32$). In brief, thanks to the subsequent results, we are now able to cover the whole domain that extends up to - but does not include - the standard space-time white noise (see the comments of Figure \ref{figure-h1-h2} for more details). It is worth mentioning that in the (very) particular martingale situation where $H_1=H_2=\frac12$, the RS machinery has been implemented by M. Hairer and E. Pardoux in \cite{hairer-pardoux}, leading the authors to a Wong-Zakaï-type property similar to our forthcoming Corollary \ref{coro:equation}.

\smallskip

$\bullet$ The exhibition of a third-order $K$-rough path above the fractional noise turns out to be a quite technical task. The construction differs from those in the white-noise situation, due to some fractional kernel to be dragged throughout the computations. Our analysis relies on Fourier techniques inherited from the harmonizable (or Fourier) representation of the fractional sheet, that is the formula
\begin{equation}\label{representation-sheet}
B(t,x)=c_{H_1,H_2}\int_{\R^2} \fouri{W}(d\xi,d\eta) \, \frac{e^{\imath t\xi}-1}{|\xi|^{H_1+\frac12}} \frac{e^{\imath x\eta}-1}{|\eta|^{H_2+\frac12}} \ , 
\end{equation}
for some appropriate constant $c_{H_1,H_2} >0$ and where $\fouri{W}$ stands for the Fourier transform of a space-time white noise in $\R^2$. Our main result, namely the existence of a $K$-rough path above $\partial_t\partial_x B$, can then be seen as a parabolic version of the results of Coutin and Qian about fractional rough paths (see \cite{coutin-qian}). Just as in the latter reference, we will also be able to provide a decomposition of our fractional $K$-rough path in terms of Skorohod integrals, to be compared with the formulas in \cite[Theorem 4]{coutin-qian}. We consider these chaos-decomposition formulas another substantial improvement with respect to the study in \cite{deya}: we are here able to describe our abstract $K$-rough path in terms of some \enquote{pre-existing} stochastic tools. In the - very - particular situation of a white-in-time noise, the decomposition reduces to the sole \enquote{Itô} $K$-rough path, an identification that can then be transposed on the level of the equation itself (see the last statement in Corollary \ref{coro:equation}).

\

One of the main ideas in both RP and RS theories can be summed up as follows - in a loose manner, of course: in order to interpret and solve the noisy differential equation under consideration (standard differential equations for RP theory, parabolic equations for RS theory), and therefore give a sense to the implicitly-defined solution, we only need to study a finite number of objects that are explicitly defined in terms of the noise only. In other words, all the successive operations involved in the equation, i.e., composition with a smooth vector fields, multiplication with the noise, integration and even the fixed-point argument, nicely combine around these few explicit objects, called the rough path in RP theory and the $K$-rough path in our setting.
\smallskip

In this paper about Equation (\ref{equation-base}), and for the sake of conciseness, we will not come back to the description of the sophisticated machinery that associates a $K$-rough path (or a \enquote{model} along Hairer's terminology) with a solution of the equation. The details of this sophisticated deterministic procedure can be found in \cite[Sections 4 to 7]{hai-14}, as well as in \cite[Section 2]{deya} for a shorter version applying specifically to the dynamics of (\ref{equation-base}). 

\smallskip

Thus, in what follows, we will only stick to the problem of constructing a $K$-rough path above the fractional noise. A natural way to initiate this construction is to start from the so-called canonical $K$-rough path associated with a given smooth approximation $B^n$ of the rough fractional noise $B$. This object has to be seen as the parabolic counterpart of the iterated integrals - or \enquote{canonical rough path} - of RP theory: just as its one-parameter model, the canonical $K$-rough is indeed derived from a Taylor expansion of the classical equation associated with the smooth path $B^n$. A specific description of this object, that we will denote by $\mathbf{B}^n$ in the sequel, will be given in Definition \ref{defi:canonical}.

\smallskip

Showing the convergence of $\mathbf{B}^n$ would then immediately provide us with a $K$-rough path above $\partial_t\partial_x B$, as desired - again, the situation can for instance be compared with the RP example treated in \cite{coutin-qian}. Unfortunately, as soon as $2H_1+H_2 \leq 2$, such a convergence happens to fail, which forces us to turn to renormalization tricks and exploit the flexibility of the definition of a $K$-rough path, as illustrated by Lemma \ref{lem:transfo-m}. This divergence phenomenon and the need for an appropriate renormalization were already observed at second order in \cite{deya}, and can be compared with the well-known divergence properties of KPZ or parabolic Anderson models. Couterbalancing the explosion will prove to be an intricate task at third order, with correction terms inspired by the chaos expansion of the canonical $K$-rough path.

\smallskip

Throughout the study,  we will consider the approximation $B^n$ of $B$ given by the formula: 
\begin{equation}\label{approx-noise}
B^n(s,x)=c_{H_1,H_2}\int_{D_n} \fouri{W}(d\xi,d\eta) \, \frac{e^{\imath s\xi}-1}{|\xi|^{H_1+\frac12}} \frac{e^{\imath x\eta}-1}{|\eta|^{H_2+\frac12}} \ , 
\end{equation}
where $D_n:=[-2^{2n},2^{2n}] \times [-2^n,2^n]$, and $c_{H_1,H_2},\fouri{W}$ are defined just as in (\ref{representation-sheet}). Thanks to the isometry properties satisfied by $W$ (or $\mathcal{F}(W)$), such an approximation readily yields explicit and manageable formulas (in terms of the fractional kernel) when computating related moments. Another advantage of the representation is that the Malliavin calculus with respect to $B^n$ (one of the keys of our analysis) can easily be connected with the standard Malliavin calculus for $W$, or $\mathcal{F}(W)$, as we will see it in Section \ref{sec:mall-cal}. This being said, we are pretty sure that the consideration of a mollifying procedure (just as in \cite{hai-14}) would lead to very similar constructions and results. Consider indeed a mollifying sequence $\rho_n(s,x):=2^{3n} \rho(2^{2n}s,2^n x)$ on $\R^2$ and denote temporarily
$$L_{(s,x)}(\xi,\eta):=c_{H_1,H_2} \, \frac{e^{\imath s\xi}-1}{|\xi|^{H_1+\frac12}} \frac{e^{\imath x\eta}-1}{|\eta|^{H_2+\frac12}} \ .$$
Then using the representation (\ref{representation-sheet}) of the fractional sheet and applying Fubbini theorem - at least formally - allow us to write
$$(\rho_n \ast B)(s,x)= \int_{\R^2} \fouri{W}(d\xi,d\eta) \, (\rho_n \ast L_{(s,x)})(\xi,\eta) \ ,$$
which points out the strong similarity with the above approximation 
$$B^n(s,x):=\int_{\R^2} \fouri{W}(d\xi,d\eta) \, (\1_{D_n} \cdot L_{(s,x)})(\xi,\eta)\ .$$
In the same vein, there is no doubt to us that the subsequent constructions could easily be extended to a wider class of fractional noises, provided one can exhibit appropriate bounds on the Fourier transform of their covariance function (this will indeed be the quantity at the core of the computations regarding the noise part). For the sake of conciseness, we have stuck to the prototype fractional-sheet example though.

\

The paper is organized as follows. In Section \ref{sec:main-results}, we introduce the notion of a $K$-rough path, which corresponds to the central object of our analysis and - hopefully - offers a clear link between the formalisms of RP and RS theories. We shall also state a few basic properties satisfied by $K$-rough paths, together with analytical tools suited to these objects. This will put us in a position to state our main result, namely the existence of a $K$-rough path above the fractional sheet, as well as its important consequences on Equation (\ref{equation-base}). Section \ref{sec:mall-cal} and \ref{sec:proof-main-results} are then devoted to the details of this construction. Section \ref{sec:mall-cal} actually consists in a Malliavin-chaos expansion of the components of the (renormalized) canonical $K$-rough path associated with the smooth approximation $B^n$, while Section \ref{sec:proof-main-results} focuses on the extension of these formulas above the rough process $B$, by means of technical moments controls. Finally, in Appendix \ref{append}, we have collected a few useful (deterministic) estimates related to the interactions between $K$-rough paths and the heat kernel, at first and second orders.

\section{$K$-rough paths and main results}\label{sec:main-results}

The general machinery of RS theory, as introduced in \cite{hai-14}, leans on a combination of a high number of sophisticated objects, gathered under the names of models and regularity structures. However, when specializing the analysis to Equation (\ref{equation-base}) and focusing on the essential information within those models/regularity structures, a relatively simple object naturally arises. We will call a $K$-rough path for its similarity with a classical RP. Just as with classical RP, the definition of a $K$-rough path highly depends on the roughness of the driver - here, the almost sure roughness of $\dot{B}:=\partial_t \partial_x B$ -, seen as a distribution. In order to quantify this roughness, we will use the (local) Besov-type topologies introduced in \cite{hai-14}, as detailled below. 

\smallskip

\textbf{Notations.} We will denote by $\cd'(\R^2)$ the set of general distributions and by $\cd_m'(\R^2)$ ($m \geq 0$) the dual of the space $\cac^m(\R^2)$ of $m$-times differentiable and compactly-supported test-functions. Let us also consider the usual parabolic scaling $\scal:=(2,1)$ and the related balls 
$$\cb_{\scal}(x,R):=\{y=(y_0,y_1)\in \R^2: \, \sqrt{|y_0-x_0|+|y_1-x_1|^2}\leq R\} \ ,$$
for every $x=(x_0,x_1)\in \R^2$ and $R>0$. Given $\vp:\R^2 \to \R$, $x=(x_0,x_1)\in \R^2$ and $\ell \geq 0$, we will denote by $\vp_x^\ell$ the $x$-centered and $2^{-\ell}$-scaled version of $\vp$, that is
$$\vp_x^\ell(y):=2^{3\ell} \vp(2^{2\ell}(y_0-x_0),2^\ell(y_1-x_1)) \ , \ \text{for all} \ y=(y_0,y_1)\in \R^2 \ .$$
Finally, we will denote by $\cb$ the set of smooth functions on $\R^2$ with compact support included in $\cb_{\scal}(x,R)$ and derivatives uniformly bounded by $1$ up to order $4$.

\begin{definition}\label{def:besov-space}
For every $\al <0$ and every set $\compac\subset \R^2$, we say that a distribution $X\in \cd'(\R^2)$ belongs to $\cac^{\al}(\compac)$ if it belongs to $\cd'_{4}(\R^2)$ and if the quantity
$$\lVert X\rVert_{\al;\compac}:=\sup_{x\in \compac,\vp\in \cb,\ell \geq 0}\, 2^{\al \ell}|\langle X,\vp_x^\ell \rangle |$$
is finite. In the sequel, we denote by $\cac^\al_c(\R^2)$ the set of distributions $X\in \cd'(\R^2)$ such that $X\in \cac^\al(\compac)$ for every compact set $\compac$.
\end{definition}

\smallskip

\begin{definition}\label{def:besov-space-2}
For every $\al <0$ and every set $\compac\subset \R^2$, we say that a map $X:\R^2 \to  \cd'(\R^2)$ belongs to $\pmb{\cac}^\al(\compac)$ if for every $x\in \R^2$, $X_x$ belongs to $\cd'_{4}(\R^2)$  and if the quantity
$$\lVert X\rVert_{\al;\compac}:=\sup_{x\in \compac,\vp\in \cb,\ell \geq 0}\, 2^{\al \ell}|\langle X_x,\vp_x^\ell \rangle |$$
is finite. We denote by  $\pmb{\cac}^\al_c(\R^2)$ the set of maps $X:\R^2 \to  \cd'(\R^2)$ such that $X\in \pmb{\cac}^\al(\compac)$ for every compact set $\compac$.
\end{definition}

\smallskip

\begin{proposition}[\cite{deya}]\label{prop:regu-brow-sh}
Let $B$ denote a fractional sheet of Hurst index $(H_1,H_2)$, defined on some complete probability space $(\Omega,\mathfrak{F},\mathbb{P})$, and consider its derivative $\dot{B}:=\partial_t\partial_x B$, understood in the sense of distributions. Then, almost surely, $\dot{B}$ belongs to $\cac_c^\al(\R^2)$, for every $\al<-3+2H_1+H_2$.
\end{proposition}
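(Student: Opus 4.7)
The plan is to derive an $L^2$ bound on $\langle\dot B,\vp_x^\ell\rangle$ directly from the harmonizable representation \eqref{representation-sheet}, upgrade it to $L^p$ via Gaussian hypercontractivity in the first Wiener chaos of $W$, and then conclude by a Kolmogorov-type criterion tailored to the Besov-type topology defining $\cac^\al(\compac)$.

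First, formally differentiating \eqref{representation-sheet} under the stochastic integral yields, for any smooth compactly-supported test function $\vp$,
$$\langle\dot B,\vp\rangle \;=\; -\,c_{H_1,H_2}\int_{\R^2}\fouri{W}(d\xi,d\eta)\,\frac{\xi\,\eta}{|\xi|^{H_1+\frac12}\,|\eta|^{H_2+\frac12}}\,\widehat{\vp}(-\xi,-\eta),$$
a well-defined element of the first chaos of $W$ thanks to the decay of $\widehat{\vp}$. The isometry of $\fouri{W}$ then gives
$$\esp\big[|\langle\dot B,\vp\rangle|^2\big]\;=\;c_{H_1,H_2}^2\int_{\R^2}|\xi|^{1-2H_1}|\eta|^{1-2H_2}|\widehat{\vp}(\xi,\eta)|^2\,d\xi\,d\eta.$$
Specializing to $\vp=\vp_x^\ell$ and using the Fourier scaling $\widehat{\vp_x^\ell}(\xi,\eta)=e^{-\imath(x_0\xi+x_1\eta)}\,\widehat{\vp}(2^{-2\ell}\xi,2^{-\ell}\eta)$, the change of variables $(\xi,\eta)\mapsto(2^{2\ell}\xi,2^\ell\eta)$ produces the expected scaling
$$\esp\big[|\langle\dot B,\vp_x^\ell\rangle|^2\big]\;\lesssim\;2^{2\ell(3-2H_1-H_2)}\int_{\R^2}|\xi|^{1-2H_1}|\eta|^{1-2H_2}|\widehat{\vp}(\xi,\eta)|^2\,d\xi\,d\eta.$$
The remaining integral is uniformly bounded over $\vp\in\cb$: near the origin, local integrability follows from $H_1,H_2\in(0,1)$, while at infinity the fact that $\vp$ has bounded derivatives up to order four on a fixed support yields enough polynomial decay of $\widehat{\vp}$.

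Since $\langle\dot B,\vp_x^\ell\rangle$ lives in the first Wiener chaos of $W$, Gaussian hypercontractivity upgrades this to
$$\esp\big[|\langle\dot B,\vp_x^\ell\rangle|^p\big]^{1/p}\;\lesssim_{p}\;2^{\ell(3-2H_1-H_2)},\qquad p\geq 2,$$
with constants independent of $x$ in any fixed compact $\compac$ and of $\vp\in\cb$. To convert these moment estimates into the almost-sure membership $\dot B\in\cac^\al(\compac)$, I would invoke a Kolmogorov-type criterion suited to this Besov-type topology (in the spirit of \cite[Thm.~10.7]{hai-14}): the continuous supremum over $\vp\in\cb$ is reduced, via a wavelet decomposition of test functions at each dyadic scale $\ell$, to an analogous supremum over a countable family of rescaled wavelets, to which the $L^p$ bounds above combine with a Borel--Cantelli argument at the price of an arbitrarily small loss $\ep>0$ in the exponent. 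Choosing $p$ large enough and $\ep$ so that $\al<-3+2H_1+H_2-\ep$ then gives the claim.

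The main obstacle is not the $L^2$ computation, which reduces to Fourier scaling, but this last upgrading step: the continuous parameter space $\cb$ must first be discretized through an appropriate countable wavelet-type basis before a Borel--Cantelli argument can produce a uniform-in-$(x,\vp)$ almost-sure bound. The local nature of the statement -- $\cac^\al$ rather than a global space -- is what ultimately makes this reduction manageable.
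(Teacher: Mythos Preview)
The paper does not actually prove this proposition: it is stated with a citation to \cite{deya} and no argument is given here. Your proposal is correct and is precisely the standard route---a Fourier-side second-moment computation via the harmonizable representation, hypercontractivity in the first chaos, and then the wavelet/Kolmogorov criterion of \cite[Thm.~10.7]{hai-14} (which in this paper appears in the guise of Lemma~\ref{GRR-gene}). This is exactly how such regularity statements are obtained in \cite{deya} and throughout the regularity-structures literature, so there is nothing to compare: your sketch matches the expected proof.
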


\smallskip

As we already mentionned it in the introduction, the definition of the canonical $K$-rough above a smooth approximation $B^n$ (and by extension the definition of a $K$-rough path above a rough function $B$) is derived from the space-time expansion of Equation (\ref{equation-base}), that can also be written as
\begin{equation}\label{mild-formul}
Y(s,x)=(G(s,.)\ast \Psi)(x)+\int_0^sdt\int_{\R}dy\,  G(s-t,x-y) F(y,Y(t,y))X(t,y) \ ,
\end{equation}   
where $G$ stands for the usual heat kernel on $\R$ and we have set $X:=\partial_t\partial_x B^n$. Therefore, it is not a surprise that the description of the key elements of the dynamics, which together will form the canonical $K$-rough path, should appeal to the heat kernel. As we are only dealing with local behaviours, it actually suffices to focus on the kernel around its singularity, that is around $0$, which gives birth to the following definition, more suited to the topology under consideration:

\begin{definition}\label{defi:loc-heat-ker}
We call a \emph{localized heat kernel} any function $K:\R^2 \backslash \{(0,0)\}\to \R$ satisfying the following conditions:

\smallskip

\noindent
$(i)$ $K(x)=0$ for every $x=(x_0,x_1) \in \R^2$ such that $x_0\leq 0$.

\smallskip

\noindent
$(ii)$ There exists a smooth function $K_0:\R^2 \to \R$ with support in $[-1,1]^2$ such that for every non-zero $x=(x_0,x_1) \in \R^2$, one has 
\begin{equation}\label{decompo-k}
K(x)=\sum_{\ell\geq 0} 2^{\ell} K_0(2^{2\ell}x_0,2^\ell x_1) \ .
\end{equation}
\end{definition}

Again: a localized heat kernel $K$ is nothing but a local representation, around $0$, of the heat kernel $G$ (see \cite[Lemma 5.5]{hai-14} for more details). The following bound on the Fourier transform (denoted by $\mathcal{F}$) of such a localized heat kernel will be extensively used in the computations of Section \ref{sec:proof-main-results}. Its proof is an immediate consequence of the decomposition (\ref{decompo-k}), as the reader can easily check it:

\begin{lemma}\label{lem:estim-k-hat}
Let $K$ be a localized heat kernel, in the sense of Definition \ref{defi:loc-heat-ker}. Then for all $a,b\in [0,1)$ satisfying $a+b<1$, there exists a constant $c_{K,a,b}$ such that for all non-zero $\xi,\eta\in \R$,
$$\big| \fouri{K}(\xi,\eta)\big| \leq \frac{c_{K,a,b}}{|\xi|^{a} |\eta|^{2b}} \ .$$
\end{lemma}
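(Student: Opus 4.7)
The plan is to decompose $\fouri{K}$ dyadically using the representation (\ref{decompo-k}), exploit the Schwartz-class property of $\fouri{K_0}$, and then use the condition $a+b<1$ to sum the resulting geometric series. The only subtlety is to keep track of the parabolic scaling $(2,1)$, which is what produces the exponent $2b$ on $|\eta|$ rather than $b$.

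Concretely, I would first treat the single dyadic block $K_\ell(x_0,x_1):=2^\ell K_0(2^{2\ell}x_0,2^\ell x_1)$ and compute its Fourier transform by the change of variables $u=2^{2\ell}x_0$, $v=2^\ell x_1$, which yields
\begin{equation*}
\fouri{K_\ell}(\xi,\eta)=2^{-2\ell}\,\fouri{K_0}\bigl(2^{-2\ell}\xi,2^{-\ell}\eta\bigr).
\end{equation*}
Since $K_0$ is smooth and compactly supported, $\fouri{K_0}$ is a Schwartz function, and in particular (using that $a,2b\geq 0$ so the powers are harmless near the origin) the quantity
\begin{equation*}
M:=\sup_{(u,v)\in\R^2}|u|^a|v|^{2b}\,|\fouri{K_0}(u,v)|
\end{equation*}
is finite.

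Next, I would multiply by $|\xi|^a|\eta|^{2b}$ and rewrite these factors in terms of the rescaled variables $u=2^{-2\ell}\xi$, $v=2^{-\ell}\eta$, picking up $|\xi|^a=2^{2a\ell}|u|^a$ and $|\eta|^{2b}=2^{2b\ell}|v|^{2b}$. This gives the identity
\begin{equation*}
|\xi|^a|\eta|^{2b}\,|\fouri{K_\ell}(\xi,\eta)|=2^{-2(1-a-b)\ell}\,|u|^a|v|^{2b}\,|\fouri{K_0}(u,v)|\leq M\,2^{-2(1-a-b)\ell}.
\end{equation*}

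Finally, summing over $\ell\geq 0$ and using $1-a-b>0$ gives the geometric bound
\begin{equation*}
|\xi|^a|\eta|^{2b}\,|\fouri{K}(\xi,\eta)|\leq M\sum_{\ell\geq 0}2^{-2(1-a-b)\ell}=:c_{K,a,b},
\end{equation*}
which is exactly the desired estimate. The main (and really the only) point to be careful about is the parabolic scaling in (\ref{decompo-k}): it is responsible both for the prefactor $2^{-2\ell}$ in $\fouri{K_\ell}$ and for the asymmetric exponents on $|\xi|$ and $|\eta|$, and one must balance them so that the exponent on $2^{-\ell}$ in the summand reads $2(1-a-b)$, which is strictly positive exactly under the assumption $a+b<1$.
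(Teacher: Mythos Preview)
Your proof is correct and follows exactly the route the paper indicates: the paper merely states that the bound ``is an immediate consequence of the decomposition (\ref{decompo-k}), as the reader can easily check it,'' and your argument is precisely that check, carried out cleanly with the parabolic scaling accounted for.
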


Finally, space-time expansion of (\ref{mild-formul}) will naturally lead us to consider space-time expansions of the heat kernel (or its localized version). Let us label those quantities for further use: given a localized heat kernel $K$ and for all $x=(x_0,x_1),y=(y_0,y_1),z=(z_0,z_1)\in \R^2$, we set
\begin{equation}\label{defi-k-un}
K^{(1)}_{x,y}(z):=K(y-z)-K(x-z) \ ,
\end{equation}
\begin{equation}\label{defi-k-deux}
K^{(2)}_{x,y}(z):=K(y-z)-K(x-z)-(y_1-x_1) (D^{(0,1)}K)(x-z) \ .
\end{equation}

\subsection{$K$-rough paths}\label{subsec:k-rp}
We are now ready to introduce our central notion of a $K$-rough path above a given deterministic distribution $X\in \cac_c^\al(\R^2)$, at least for $\al<0$ large enough. This definition already appeared in \cite{deya} for $\al\in (-\frac43,0)$, which morally corresponds to an expansion of the equation up to second order - with the consideration of some $K$-Lévy area term. We would like to go one step further here and handle the situation where $\al\in (-\frac32,\frac43]$, which forces us to introduce third-order elements in the analysis.

\smallskip

Thus, from now on and for the rest of Section \ref{subsec:k-rp}, we fix such a coefficient 
$$\boxed{-\frac32<\al \leq -\frac43 \ .}$$
Let us start with the description of the canonical $K$-rough path in this case:

\begin{definition}[Canonical $K$-rough path]\label{defi:canonical}
Given a localized heat kernel $K$ and a continuous function $X:\R^2\to \R$, we call the \emph{canonical} $K$-rough path (of order $\al$) above $X$ the $4$-uplet
$$\mathbf{X}=(\mathbf{X}^{\mathbf{1}},\mathbf{X}^{\mathbf{2}},\mathbf{X}^{\mathbf{3.1}},\mathbf{X}^{\mathbf{3.2}})$$
defined along the following iterative formulas: for every $x=(x_0,x_1),y=(y_0,y_1)\in \R^2$,
\begin{equation}\label{defi-can-rp-1}
\mathbf{X}^{\mathbf{1}}:=X  \quad , \quad \mathbf{X}^{\mathbf{2}}_x(y):= \langle \mathbf{X}^{\mathbf{1}}, K^{(1)}_{x,y} \rangle \, X(y) \quad ,
\end{equation}
\begin{equation}\label{defi-can-rp-2}
\mathbf{X}^{\mathbf{3.1}}_x(y):=\langle \mathbf{X}^{\mathbf{2}}_x, K^{(2)}_{x,y} \rangle\, X(y) \quad , \quad \mathbf{X}^{\mathbf{3.2}}_x(y):=\langle \mathbf{X}^{\mathbf{1}}, K^{(1)}_{x,y} \rangle^2\, X(y)  \ .
\end{equation}
\end{definition}
Let us insist one more time on the fact that this definition is motivated by the space-time Taylor expansion of the standard PDE (\ref{mild-formul}). Then, just as in RP theory, it turns out that this notion of a canonical $K$-rough path - and the whole integration machinery built upon it - can be lifted at some more abstract level above a rougher distribution $X$, which gives rise to the following general definition:

\begin{definition}[$K$-rough path]\label{defi:k-rough-path}
Given a localized heat kernel $K$ and a distribution $X\in \cac_c^\al(\R^2)$, we call a $K$-rough path (of order $\al$) above $X$ any $4$-uplet 
$$\mathbf{X}=(\mathbf{X}^{\mathbf{1}},\mathbf{X}^{\mathbf{2}},\mathbf{X}^{\mathbf{3.1}},\mathbf{X}^{\mathbf{3.2}})\in \cac_c^\al(\R^2)\times  \pmb{\cac}_c^{2\al+2}(\R^2) \times \pmb{\cac}_c^{3\al+4}(\R^2) \times \pmb{\cac}_c^{3\al+4}(\R^2)$$
such that $\mathbf{X}^{\mathbf{1}} =X$ and the following \enquote{$K$-Chen} relations hold: for every $x=(x_0,x_1),y=(y_0,y_1)\in \R^2$,
\begin{equation}\label{k-chen-relation}
\mathbf{X}_{x}^{\mathbf{2}}-\mathbf{X}_{y}^{\mathbf{2}}= \mathbf{X}^{\mathbf{1}}\big( K^{(1)}_{x,y}\big) \cdot \mathbf{X}^{\mathbf{1}} \quad , \quad\mathbf{X}_{x}^{\mathbf{3.2}}-\mathbf{X}_{y}^{\mathbf{3.2}}= \mathbf{X}^{\mathbf{1}}\big( K^{(1)}_{x,y}\big) \cdot \big\{ \mathbf{X}_{x}^{\mathbf{2}}+\mathbf{X}_{y}^{\mathbf{2}} \big\}
\end{equation}
and
\begin{multline}\label{k-chen-relation-third-order}
\mathbf{X}_{x}^{\mathbf{3.1}}-\mathbf{X}_{y}^{\mathbf{3.1}}=\\ \mathbf{X}^{\mathbf{2}}_{x}\big( K^{(2)}_{x,y}\big) \cdot \mathbf{X}^{\mathbf{1}}+\mathbf{X}^{\mathbf{1}}\big( K^{(1)}_{x,y}\big) \cdot \mathbf{X}^{\mathbf{2}}_y+\big\{\mathbf{X}^{\mathbf{2}}_{x}\big( (D^{(0,1)}K)(x-.)\big)- \mathbf{X}^{\mathbf{2}}_{y}\big( (D^{(0,1)}K)(y-.)\big) \big\} \cdot (y_1-.) \cdot \mathbf{X}^{\mathbf{1}} \ .
\end{multline}
For a fixed localized heat kernel $K$ and given two $K$-rough paths $\mathbf{X},\mathbf{Y}$ (above possibly different distributions $X,Y$), we denote, for every compact set $\compac \subset \R^2$,
\begin{equation}\label{norm-rp}
 \lVert \mathbf{X};\mathbf{Y}\rVert_{\al;\compac}:= \lVert \mathbf{X}^{\mathbf{1}}-\mathbf{Y}^{\mathbf{1}}\rVert_{\al;\compac}+\lVert \mathbf{X}^{\mathbf{2}}-\mathbf{Y}^{\mathbf{2}}\rVert_{2\al+2;\compac}+\lVert \mathbf{X}^{\mathbf{3.1}}-\mathbf{Y}^{\mathbf{3.1}}\rVert_{3\al+4;\compac}+\lVert \mathbf{X}^{\mathbf{3.2}}-\mathbf{Y}^{\mathbf{3.2}}\rVert_{3\al+4;\compac}\ ,
\end{equation}
and $\lVert \mathbf{X}\rVert_{\al;\compac}:=\lVert \mathbf{X};0\rVert_{\al;\compac}$. In the sequel, we will denote by $\mathcal{E}_{K,\al}$ the set of $K$-rough paths of order $\al$.
\end{definition}

\smallskip

\begin{remark}
As its name suggests it, the canonical $K$-rough path above a continuous function $X$ is a particular example of $K$-rough path. Relations (\ref{k-chen-relation}) and (\ref{k-chen-relation-third-order}) can indeed be easily checked from the explicit formulas in (\ref{defi-can-rp-1})-(\ref{defi-can-rp-2}). 
\end{remark}

\begin{remark}
With expansion (\ref{decompo-k}) in mind and for every $n\geq 0$, let us denote by $K^n$ the smooth compactly-supported function obtained as the finite sum $K^n(s,x):=\sum_{0\leq \ell\leq n} 2^{3\ell} K_0(2^{2\ell}s,2^\ell x)$. Then, in (\ref{k-chen-relation}) and (\ref{k-chen-relation-third-order}), and although $K$ itself is not a smooth test-function (due to its singularity at $0$), the coefficient $\mathbf{X}^{\mathbf{i}}_{x}\big( K^{(i)}_{x,y}\big)$ ($i\in \{1,2\}$), resp. $\mathbf{X}^{\mathbf{2}}_{x}\big( (D^{(0,1)}K)(x-.)\big)$, is well defined as the limit of the sequence $\mathbf{X}^{\mathbf{i}}_{x}\big( K^{(i),n}_{x,y}\big)$ ($i\in \{1,2\}$), resp. $\mathbf{X}^{\mathbf{2}}_{x}\big( (D^{(0,1)}K^n)(x-.)\big)$, where $K^{(i),n}$ is derived from $K^{(i)}$ by replacing each occurence of $K$ with $K^n$. The proof of this assertion can be easily deduced from the properties in Appendix \ref{append}, which also contains a few regularity results related to these quantities. 
\end{remark}

\begin{remark}
Relations (\ref{k-chen-relation}) and (\ref{k-chen-relation-third-order}) can legitimately be considered as a parabolic analog of the classical Chen's relations for a third-order rough path $\mathbf{x}=(\mathbf{x}^{\mathbf{1}}=\delta x,\mathbf{x}^{\mathbf{2}},\mathbf{x}^{\mathbf{3}})$ (see \cite{lyons-book} for a detailled definition). To emphasize this analogy, introduce the dual path $\tilde{\mathbf{x}}=(\tilde{\mathbf{x}}^{\mathbf{1}},\tilde{\mathbf{x}}^{\mathbf{2}},\tilde{\mathbf{x}}^{\mathbf{3}})$ defined for every test-function $\vp$ as $\tilde{\mathbf{x}}^{\mathbf{i}}_t(\vp):=\int \vp(s) \, d_s(\mathbf{x}^{\mathbf{i}}_{s,t})$. With these notations, the classical Chen's relations can also be written as
$$\tilde{\mathbf{x}}^{\mathbf{2}}_s-\tilde{\mathbf{x}}^{\mathbf{2}}_t=\tilde{\mathbf{x}}^{\mathbf{1}}(\1_{[s,t]})\, \tilde{\mathbf{x}}^{\mathbf{1}} \quad , \quad \tilde{\mathbf{x}}^{\mathbf{3}}_s-\tilde{\mathbf{x}}^{\mathbf{3}}_t=\tilde{\mathbf{x}}^{\mathbf{2}}_s(\1_{[s,t]})\, \tilde{\mathbf{x}}^{\mathbf{1}}+\tilde{\mathbf{x}}^{\mathbf{1}}(\1_{[s,t]})\, \tilde{\mathbf{x}}^{\mathbf{2}}_t\ ,$$
which makes the similarity with (\ref{k-chen-relation})-(\ref{k-chen-relation-third-order}) obvious. Obseerve however that the higher complexity of this two-parameter setting forces us to consider more sophisticated structures exhibiting two third-order components (instead of one for RP theory). Also, the implicit presence of the (regularizing) heat kernel in the definition of a $K$-rough path echoes in a natural way on the choice of the roughness assumptions, that is on the choice of the successive combinations $\al$, $2\al+2$, $3\al+4$, as Formulas (\ref{defi-can-rp-1})-(\ref{defi-can-rp-2}) and the regularity properties of Appendix \ref{append} should convince the reader.
\end{remark}

\begin{remark}\label{rk:d-al}
The set $\mathcal{E}_{K,\al}$ can easily be turned into a complete metric space by considering the distance
$$d_\al(\mathbf{X},\mathbf{Y}):=\sum_{k\geq 0} 2^{-k} \frac{\Vert \mathbf{X};\mathbf{Y}\rVert_{\al;R_k}}{1+\lVert \mathbf{X};\mathbf{Y}\rVert_{\al;R_k}} \ ,$$
where $\lVert \mathbf{X};\mathbf{Y}\rVert_{\al;\compac}$ is defined by (\ref{norm-rp}) and we have set $R_k:=[-k,k]^2$.
The completeness of $(\mathcal{E}_{K,\al},d_\al)$ can indeed be shown along the arguments of the proof of \cite[Proposition 3.1]{deya}.
\end{remark}

\smallskip

The following basic property, the proof of which is immediate, illustrates the flexibility of the definition of a $K$-rough path. It shows in particular that, provided it exists, a $K$-rough path above a given distribution $X\in \cac_c^\al(\R^2)$ is not unique at all. Here again, the idea is strongly reminiscent of the well-known non-uniqueness property of classical rough paths. In the sequel, we shall rely on this \enquote{renormalization} trick to overcome the diverging issue raised by the canonical $K$-rough path above the approximation $B^n$.  

\begin{lemma}[Renormalization]\label{lem:transfo-m}
Fix a localized heat kernel $K$ and a path $X\in \cac_c^\al(\R^2)$. Given a $K$-rough path $\mathbf{X}$ above $X$ and a constant $c\in \R$, consider the path 
$$\widehat{\mathbf{X}}=\text{Renorm}\,(\mathbf{X},c):=(\widehat{\mathbf{X}}^{\mathbf{1}},\widehat{\mathbf{X}}^{\mathbf{2}},\widehat{\mathbf{X}}^{\mathbf{3.1}},\widehat{\mathbf{X}}^{\mathbf{3.2}})$$
defined along the following formulas:
$$\widehat{\mathbf{X}}^{\mathbf{1}}:=\mathbf{X}^{\mathbf{1}} \quad , \quad \widehat{\mathbf{X}}^{\mathbf{2}}_x:=\mathbf{X}^{\mathbf{2}}_x-c \, \mathbf{Id} \ ,$$
$$\widehat{\mathbf{X}}^{\mathbf{3.1}}_x:=\mathbf{X}^{\mathbf{3.1}}_x-c \, \mathbf{X}^{\mathbf{1}}\big( K^{(1)}_{x,.}\big)   \quad , \quad \widehat{\mathbf{X}}^{\mathbf{3.2}}_x:=\mathbf{X}^{\mathbf{3.2}}_x-2c \, \mathbf{X}^{\mathbf{1}}\big( K^{(1)}_{x,.}\big) \ .$$
Then $\widehat{\mathbf{X}}$ is a $K$-rough path above $X$ as well.
\end{lemma}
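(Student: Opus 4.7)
The plan is to check, for the renormalized $4$-tuple $\widehat{\mathbf{X}}$, the two conditions of Definition \ref{defi:k-rough-path}: the regularity memberships and the $K$-Chen relations (\ref{k-chen-relation})-(\ref{k-chen-relation-third-order}). Throughout, I interpret $\mathbf{Id}$ as the constant distribution $1$, so that $\langle \mathbf{Id},\vp\rangle=\int\vp$. Since $\widehat{\mathbf{X}}^{\mathbf{1}}=\mathbf{X}^{\mathbf{1}}$, only the higher-order corrections require work, and they are built from the two basic pieces $\mathbf{Id}$ and $y\mapsto\mathbf{X}^{\mathbf{1}}(K^{(1)}_{x,y})$.

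For the regularity memberships, I first observe that $|\langle\mathbf{Id},\vp_x^\ell\rangle|=|\int\vp|$ is uniformly bounded over $x\in\compac,\vp\in\cb,\ell\geq 0$, hence $\mathbf{Id}\in\pmb{\cac}^{2\al+2}_c(\R^2)$ (the prefactor $2^{(2\al+2)\ell}$ is nonincreasing since $2\al+2<0$). For the corrections appearing in $\widehat{\mathbf{X}}^{\mathbf{3.1}}$ and $\widehat{\mathbf{X}}^{\mathbf{3.2}}$, the deterministic estimates gathered in Appendix \ref{append} furnish a local bound $|\mathbf{X}^{\mathbf{1}}(K^{(1)}_{x,y})|\lesssim \|y-x\|_\scal^{\al+2}$; since $\vp_x^\ell$ has $L^1$-norm of order $1$ and support included in $\{\|y-x\|_\scal\lesssim 2^{-\ell}\}$, this produces an upper bound of order $2^{-(\al+2)\ell}$, and multiplication by $2^{(3\al+4)\ell}$ leaves $2^{(2\al+2)\ell}$, bounded uniformly in $\ell$.

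For the Chen relations, the key algebraic input is the trivial identity $K^{(1)}_{x,z}-K^{(1)}_{y,z}=K^{(1)}_{x,y}$ viewed as functions of the remaining variable, which implies that $\mathbf{X}^{\mathbf{1}}(K^{(1)}_{x,\cdot})-\mathbf{X}^{\mathbf{1}}(K^{(1)}_{y,\cdot})$ equals the constant $\mathbf{X}^{\mathbf{1}}(K^{(1)}_{x,y})$, i.e.\ $\mathbf{X}^{\mathbf{1}}(K^{(1)}_{x,y})\,\mathbf{Id}$ as a distribution. The first relation in (\ref{k-chen-relation}) is immediate since the correction $-c\,\mathbf{Id}$ is independent of the base point. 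For the second one, a direct substitution reveals that both sides pick up the same extra term $-2c\,\mathbf{X}^{\mathbf{1}}(K^{(1)}_{x,y})\,\mathbf{Id}$ when one passes from $\mathbf{X}$ to $\widehat{\mathbf{X}}$, so that relation reduces to the known one.

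The bulk of the work lies in (\ref{k-chen-relation-third-order}). Substituting $\widehat{\mathbf{X}}^{\mathbf{2}}=\mathbf{X}^{\mathbf{2}}-c\,\mathbf{Id}$ into the three summands on the right-hand side produces three correction terms, proportional respectively to $\mathbf{Id}(K^{(2)}_{x,y})\,\mathbf{X}^{\mathbf{1}}$, $\mathbf{X}^{\mathbf{1}}(K^{(1)}_{x,y})\,\mathbf{Id}$, and $\{\mathbf{Id}((D^{(0,1)}K)(x-\cdot))-\mathbf{Id}((D^{(0,1)}K)(y-\cdot))\}\,(y_1-\cdot)\,\mathbf{X}^{\mathbf{1}}$. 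The compact support of $K$ kills the first and the third: translation invariance of Lebesgue measure gives $\int K^{(1)}_{x,y}=\int K^{(2)}_{x,y}=0$, and an integration by parts in $z_1$ gives $\int(D^{(0,1)}K)(x-\cdot)=0$. Only $-c\,\mathbf{X}^{\mathbf{1}}(K^{(1)}_{x,y})\,\mathbf{Id}$ survives on the right, and this matches exactly the correction acquired by the left-hand side through the key identity. The main obstacle is purely notational -- keeping straight which occurrence of $\mathbf{Id}$ is tested against which kernel -- and everything works because of those three vanishing integrals.
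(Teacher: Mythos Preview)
Your proof is correct and is exactly the direct verification the paper has in mind; the paper itself does not supply a proof, merely noting that the statement is immediate. Your identification of the three vanishing integrals $\int K^{(2)}_{x,y}=0$, $\int (D^{(0,1)}K)(x-\cdot)=0$ (both consequences of the compact support of $K_0$ and translation invariance) together with the cocycle identity $K^{(1)}_{x,\cdot}-K^{(1)}_{y,\cdot}=K^{(1)}_{x,y}$ is precisely the algebra needed, and your regularity check via the Appendix~\ref{append} estimate and the sign of $2\al+2$ is the intended one.
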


\smallskip

In RP theory, the so-called Garsia-Rodemich-Rumsey Lemma and its extensions (see e.g. \cite[Section 6]{gubi-controlling}) provide a very efficient tool to study the roughness of the processes under consideration, and therefore represent one of the keys of the RP analysis. Using sophisticated wavelets arguments, M. Hairer succeeded in the exhibition of similar tools in the multiparameter setting. The following statement offers a possible simple way to account for these results. In particular, it should be clear to the reader that the central condition in this statement is directly related to (not to say that it perfectly fits) the structure of the $K$-Chen relations from Definition \ref{defi:k-rough-path} (compare (\ref{condit-grr}) with (\ref{k-chen-relation})-(\ref{k-chen-relation-third-order})). Let us also recall that the notation $\cb$ has been introduced at the beginning of this section.

\begin{lemma}[Multiparameter G-R-R Lemma] \label{GRR-gene}
Fix $\la\in [0,-\al)$. Let $X: \R^2 \to \cd'_2(\R^2)$ be a map with increments of the form
\begin{equation}\label{condit-grr}
X_x-X_y=\sum_{i=1,\ldots,r} \theta^i(x,y) \cdot X^{\sharp,i}_y
\end{equation}
where $X^{\sharp,i}\in \pmb{\cac}^{\al_i}_c(\R^2)$ ($\al_i\in [\al,\al+\la] $) and $\theta^i:\R^2\times \R^2\to \R$ is such that for every compact set $\compac \subset \R^2$ and every $x,y\in \compac$, one has
$$|\theta^i(x,y)| \leq C_{\theta^i;\compac} \, \lVert x-y\rVert_\scal^{\al+\la-\al_i} \ ,$$
for some constant $C_{\theta^i;\compac} \geq 0$. Then there exists a finite set $\cb_0 \subset \cb$ such that
\begin{equation}\label{bound-GRR-gene}
\lVert X\rVert_{\al+\la;R_k}\lesssim \sup_{\psi\in \cb_0} \sup_{\ell\geq 0} \sup_{x\in \Lambda_\scal^\ell \cap R_{k+1}} 2^{\ell(\al+\la)} |\langle X_x, \psi_x^\ell \rangle |+\sum_{i=1,\ldots,r} C_{\theta^i;R_{k+1}}\lVert X^{\sharp,i}\rVert_{\al_i;R_{k+1}} \ ,
\end{equation}
where we have set $\Lambda_\scal^\ell:=\{(2^{-2\ell} k_1,2^{-\ell}k_2), \ k_1,k_2\in \Z\}$ and $R_k:=[-k,k]^2$.
\end{lemma}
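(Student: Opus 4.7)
The plan is to follow the scheme of Hairer's reconstruction theorem \cite[Theorem 3.10]{hai-14}, recast for the form of the ``consistency'' condition (\ref{condit-grr}). The idea is to introduce a compactly supported multiresolution wavelet basis adapted to the parabolic scaling $\scal$, expand any test function $\vp_x^{\ell_0}$ at an arbitrary scale $\ell_0 \geq 0$ in this basis, and reduce the bound on $\langle X_x, \vp_x^{\ell_0} \rangle$ to a controlled combination of two types of terms: (i) pairings of the form $\langle X_y, \psi_y^\ell \rangle$ with $y$ on the dyadic grid $\Lambda_\scal^\ell$, which are precisely the quantities appearing in the supremum on the right-hand side of (\ref{bound-GRR-gene}), and (ii) correction terms stemming from (\ref{condit-grr}) that will be handled using the assumptions on the $\theta^i$ and on $X^{\sharp,i}$.

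Concretely, fix a system of Daubechies-type mother wavelets $\Psi$ on $\R^2$ adapted to $\scal$, of sufficiently high smoothness and with sufficiently many vanishing moments; after absorbing a fixed multiplicative constant, $\Psi$ together with the associated scaling function $\phi$ yields the finite set $\cb_0 \subset \cb$ of the statement. Each $\psi_y^\ell$ is supported in a parabolic ball $\cb_\scal(y, C 2^{-\ell})$, and a standard coefficient estimate controls $|\langle \vp_x^{\ell_0}, \psi_y^\ell \rangle|$ by a quantity decaying geometrically in $|\ell - \ell_0|$ (through smoothness of $\phi, \psi$ when $\ell \leq \ell_0$ and through vanishing moments when $\ell \geq \ell_0$). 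The main identity is then: for any wavelet centered at a grid point $y$, apply (\ref{condit-grr}) with $x$ replaced by $y$ to obtain
\[
\langle X_x, \psi_y^\ell \rangle \;=\; \langle X_y, \psi_y^\ell \rangle + \sum_{i=1}^{r} \theta^i(x,y)\,\langle X^{\sharp,i}_y, \psi_y^\ell \rangle .
\]
The first summand is exactly of the form being maximized in (\ref{bound-GRR-gene}). In the second, the support condition forces $\lVert x-y\rVert_\scal \lesssim 2^{-\ell}$, so the Hölder bound on $\theta^i$ and the definition of $\pmb{\cac}^{\al_i}_c$ give
\[
\bigl|\theta^i(x,y)\,\langle X^{\sharp,i}_y, \psi_y^\ell\rangle\bigr| \;\lesssim\; C_{\theta^i;R_{k+1}}\, 2^{-\ell(\al+\la-\al_i)} \cdot 2^{-\ell \al_i}\,\lVert X^{\sharp,i}\rVert_{\al_i;R_{k+1}} \;=\; C_{\theta^i;R_{k+1}}\,2^{-\ell(\al+\la)}\,\lVert X^{\sharp,i}\rVert_{\al_i;R_{k+1}} .
\]

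It remains to sum these wavelet-level estimates over scales and grid points. At each scale $\ell$, at most $O(2^{3\max(\ell-\ell_0,0)})$ wavelets $\psi_y^\ell$ intersect the support of $\vp_x^{\ell_0}$ and contribute nonzero coefficients, each bounded by a multiple of $2^{-(p+3/2)|\ell-\ell_0|}$ (in $L^2$-normalized form) where $p$ is the number of vanishing moments. Combining with the $2^{-\ell(\al+\la)}$ factor from the previous step, one obtains a geometric series in $|\ell - \ell_0|$ which converges provided $p > 3/2 - \al - \la$ (freely chosen at the wavelet-design stage), and whose total is controlled by $2^{-\ell_0(\al+\la)}$ times the right-hand side of (\ref{bound-GRR-gene}). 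Multiplying by $2^{\ell_0(\al+\la)}$ and taking the supremum over $x\in R_k$, $\vp \in \cb$, $\ell_0 \geq 0$ then yields the claim. The main obstacle is the wavelet setup together with the scale-by-scale bookkeeping in this last step; once this analytic framework is fixed, the heart of the argument is the short identity above, which translates the hypothesis (\ref{condit-grr}) into a per-wavelet estimate.
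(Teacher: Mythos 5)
Your proposal is correct and follows essentially the same route as the paper, whose proof is a one-line reference to the wavelet-based argument of \cite[Lemma 3.2]{deya}, itself modelled on Hairer's reconstruction-theorem machinery: a parabolic multiresolution expansion, the substitution $\langle X_x,\psi_y^\ell\rangle=\langle X_y,\psi_y^\ell\rangle+\sum_i\theta^i(x,y)\langle X^{\sharp,i}_y,\psi_y^\ell\rangle$ at grid points, and geometric summation over scales. The only cosmetic imprecision is the symmetric decay rate $2^{-(p+3/2)|\ell-\ell_0|}$ claimed for the coefficients, which for coarse scales $\ell<\ell_0$ does not come from vanishing moments; there the trivial bound already suffices since $\al+\la<0$, so the argument is unaffected.
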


\begin{proof}
It is a straightforward generalization of the arguments in the proof of \cite[Lemma 3.2]{deya}.
\end{proof}

\smallskip

As we evoke it from the beginning, what makes this $K$-rough-path structure so interesting (beyond its clear analogy with the classical rough-path structure) is the fact that it can be readily injected into the machinery of \cite{hai-14} so as to deduce numerous striking results about the equation driven by $X$, that is
\begin{equation}\label{equation-base-x}
(\partial_t Y)(t,x)=(\partial^2_x Y)(t,x)+F(x,Y(t,x)) \, X(t,x) \quad , \quad Y(0,x)=\Psi(x) \ .
\end{equation}
The following general statement sums up these important consequences. At this point, let us recall that we have fixed a parameter $\al\in (-\frac32,-\frac43]$ for the whole Section \ref{subsec:k-rp}, and that the map $\text{Renorm}$ has been defined through Lemma \ref{lem:transfo-m}, while the distance $d_\al$ has been introduced in Remark \ref{rk:d-al}.

\begin{proposition}[Solution map]\label{prop:link-model}
Fix an arbitrary time horizon $T>0$ as well as a vector field $F\in \cac^\infty(\R^2;\R)$ such that $\lVert D^{(k_1,k_2)}F\rVert_{L^\infty(\R^2)} < \infty$ for all $k_1,k_2\geq 0$ and $F(x,y)=0$ for all $(x,y)\in (\R \backslash \compac_F) \times \R$,
for some compact set $\compac_F\subset \R$. Then there exists a localized heat kernel $K$ and a \enquote{solution} map
$$\Phi^K_{T,F} : \mathcal{E}_{K,\al} \times L^\infty(\R) \longrightarrow\ (0,T] \times L^\infty([0,T] \times \R)$$
such that the following properties are satisfied:

\smallskip

\noindent
$(i)$ If $X:\R^2 \to \R$ is a continuous function and $\Psi\in L^\infty(\R)$, then denoting by $\mathbf{X}$ the canonical $K$-rough path above $X$, one has $\Phi^K_{T,F}(\mathbf{X},\Psi)=(T,Y)$, where $Y$ is the classical solution on $[0,T]$ of Equation (\ref{equation-base-x}).


\noindent
$(ii)$ If $X:\R^2 \to \R$ is a continuous function, $\Psi\in L^\infty(\R)$ and $c\in \R$, then denoting by $\mathbf{X}$ the canonical $K$-rough path above $X$, one has $\Phi^K_{T,F}(\text{Renorm}(\mathbf{X},c),\Psi)=(T,\widehat{Y})$, where $\widehat{Y}$ is the classical solution on $[0,T]$ of the equation 
$$
\left\{\begin{array}{ccl}
\partial_t \widehat{Y} (t,x) &= &\partial^2_x \widehat{Y}(t,x)+F(x,\widehat{Y}(t,x)) \,X(t,x)-c\, F(x,\widehat{Y}(t,x))\, \partial_2F(x,\widehat{Y}(t,x))\ ,\\
Y(0,x) &= &\Psi(x) \ .
\end{array}\right.
$$

\smallskip

\noindent
$(iii)$ Let $(\mathbf{X},\Psi)\in \mathcal{E}_{K,\al} \times L^\infty(\R)$ and $(\mathbf{X}^n,\Psi^n)$ be a sequence in $\mathcal{E}_{K,\al} \times L^\infty(\R)$ such that
$$d_\al\big(\mathbf{X}^n,\mathbf{X}\big) \to 0 \quad , \quad \lVert \Psi^n-\Psi\rVert_{L^\infty(\R)} \to 0 \quad \text{and} \quad \Phi^K_{T,F}(\mathbf{X}^n)=(T^n,Y^n) \ ,$$
for some sequence $(T^n,Y^n) \in  (0,T] \times L^\infty([0,T] \times \R)$. Then $Y^n \to Y$ in $L^\infty([0,T_1]\times \R)$ for every $T_1<(T_0 \wedge \inf_{n} T^n)$, where $(T_0,Y):=\Phi^K_{T,F}(\mathbf{X},\Psi)$.

\smallskip

\noindent
$(iv)$ If $\Phi^K_{T,F}(\mathbf{X})=(T_0,Y)$ with $T_0<T$, then one has $\lim\limits_{\substack{t \to T_0 \\ t<T_0}}\, \lVert Y_t\rVert_{L^\infty(\R)} =\infty$ and $Y_t=0$ for $t\geq T_0$.

\smallskip

Based on the above properties and setting $(T_0,Y):=\Phi^K_{T,F}(\mathbf{X},\Psi)$, we call $Y$ the (maximal) solution on $[0,T_0]$, in the sense of the $K$-rough paths, of the equation
$$
\partial_t Y=\partial^2_x Y+F(.,Y)\, \mathbf{X}\quad, \quad Y(0,x) = \Psi(x) \ .
$$
\end{proposition}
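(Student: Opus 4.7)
The plan is to translate the $K$-rough path data into the abstract regularity-structure formalism of \cite{hai-14} and apply the general fixed-point and reconstruction machinery developed there, following (and extending to third order) the procedure set out in \cite[Section 2]{deya}. First, I would fix a decomposition $G = K + R$ of the heat kernel, with $K$ a localized heat kernel in the sense of Definition \ref{defi:loc-heat-ker} and $R$ smooth, as in \cite[Lemma 5.5]{hai-14}. I would then build a regularity structure $(\mathcal{A}, \mathcal{T}, \mathcal{G})$ containing an abstract driver symbol $\Xi$ of homogeneity $\al$, an integration symbol $\mathcal{I}$ of order $2$, and the four basis elements $\Xi,\, \mathcal{I}(\Xi)\Xi,\, \mathcal{I}(\mathcal{I}(\Xi)\Xi)\Xi,\, \mathcal{I}(\Xi)^2 \Xi$ of respective homogeneities $\al,\, 2\al+2,\, 3\al+4,\, 3\al+4$, together with polynomial symbols needed to accommodate Taylor expansions up to the relevant level. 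The restriction $-\tfrac32 < \al \leq -\tfrac43$ ensures that this finite list is exactly what is needed for the right-hand side of (\ref{equation-base-x}) to be locally expandable, and the structure group $\mathcal{G}$ is the usual one generated by translations of these symbols.

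Next, I would check that a $K$-rough path $\mathbf{X} \in \mathcal{E}_{K,\al}$ is precisely the data of a Hairer model $(\Pi, \Gamma)$ on this structure: the analytic bounds of Definitions \ref{def:besov-space}--\ref{def:besov-space-2} reproduce Hairer's $\alpha$-bounds, while the $K$-Chen relations (\ref{k-chen-relation})--(\ref{k-chen-relation-third-order}) are exactly the algebraic identities $\Pi_x = \Pi_y \Gamma_{yx}$ applied to the four basis symbols. With this identification in hand, I would apply the abstract fixed-point theorem \cite[Theorem 7.8]{hai-14} in the weighted modelled-distribution space $\mathcal{D}^{\gamma,\eta}$ for $\gamma$ slightly larger than $-3\al-4$ and $\eta$ encoding the $L^\infty$-regularity of $\Psi$ at $t=0$. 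This produces a maximal horizon $T_0 \in (0,T]$ and a modelled distribution $U$ whose reconstruction $\mathcal{R}U$ defines $Y := \Phi^K_{T,F}(\mathbf{X}, \Psi)$; the hypothesis that $F$ vanishes outside $\compac_F \times \R$ is used here to reduce all global questions to compactly-supported $\mathbf{X}$-norms, and items (iii) (continuity) and (iv) (blow-up) follow directly from the corresponding continuity and blow-up properties of this abstract solution map.

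The substantive content lies in items (i) and (ii). For (i), when $X$ is continuous and $\mathbf{X}$ is its canonical $K$-rough path (Definition \ref{defi:canonical}), the abstract modelled distribution $U$ is, by construction, the exact space-time Taylor jet up to order $3\al+4$ of the genuine solution $Y$ of (\ref{mild-formul}); the reconstruction theorem then forces $\mathcal{R}U = Y$, which can equivalently be verified by checking that the classical solution itself satisfies the abstract fixed-point equation and then invoking uniqueness. For (ii), I would interpret $\mathrm{Renorm}(\cdot, c)$ as the action on the model of the specific element $M_c$ of the renormalization group generated by $c \cdot \mathcal{I}(\Xi)\Xi \mapsto c$; a direct verification shows that this translation produces exactly the shifts of Lemma \ref{lem:transfo-m}, with the factors $1, 1, 2$ enforced by compatibility with the structure-group action on the third-order symbols. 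It then remains to compute, as in \cite[Section 9]{hai-14}, the effect of $M_c$ on the abstract right-hand side $F(\cdot, U)\, \Xi$: the contraction of $c \cdot \mathrm{Id}$ with the coefficient of $\mathcal{I}(\Xi)\Xi$ in $F(\cdot,U)$, which is precisely $F(\cdot, Y)\, \partial_2 F(\cdot, Y)$, yields the advertised Wong--Zakai-type correction $-c\, F(\cdot, Y)\, \partial_2 F(\cdot, Y)$, while the shifts in the third-order slots cancel against the structure-group action and produce no extra contribution.

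The main obstacle is the renormalization identification in (ii). The chief difficulty is not the model-level verification, which is a direct if tedious algebraic check, but rather the combinatorial accounting that shows that exactly one correction term appears at the level of the equation and that the third-order shifts in Lemma \ref{lem:transfo-m} are consistent both with the $K$-Chen relations (\ref{k-chen-relation})--(\ref{k-chen-relation-third-order}) and with the abstract fixed-point problem; it is here that the specific numerical coefficients $(c, c, 2c)$ in Lemma \ref{lem:transfo-m} prove to be the unique choice making $\widehat{\mathbf{X}}$ both a $K$-rough path and a translate of $\mathbf{X}$ by an element of the renormalization group.
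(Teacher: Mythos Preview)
Your proposal follows essentially the same route as the paper: identify a $K$-rough path with a model on an explicit regularity structure built from the symbols $\Xi$, $\Xi\ci(\Xi)$, $\Xi\ci(\Xi\ci(\Xi))$, $\Xi\ci(\Xi)^2$, and then invoke the general solution-map/renormalization machinery of \cite{hai-14}. The one point you underspecify is that the structure must also contain the intermediate symbol $\ci(\Xi\ci(\Xi))$ of homogeneity $2\al+4$ (this is not a polynomial symbol), and the paper's proof makes explicit the nontrivial formula for $\gga^{\mathbf{X}}_{xy}$ on this element, with coefficients assembled from $\mathbf{X}^{\mathbf{1}}(K^{(1)}_{x,y})$, $\mathbf{X}^{\mathbf{2}}_x(K^{(2)}_{x,y})$ and $\mathbf{X}^{\mathbf{2}}_x((D^{(0,1)}K)(x-\cdot))$; without this symbol in the structure, the action of $\gga^{\mathbf{X}}_{xy}$ on $\Xi\ci(\Xi\ci(\Xi))$ cannot close, and the verification of $\Pi^{\mathbf{X}}_y=\Pi^{\mathbf{X}}_x\circ\gga^{\mathbf{X}}_{xy}$ at third order (equivalently, the $K$-Chen relation (\ref{k-chen-relation-third-order})) does not go through.
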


\begin{proof}
The four properties $(i)$-$(iv)$ are derived from a careful examination of the analysis carried out in \cite{hai-14} (see also \cite{deya} for a detailled version of this analysis in the special case given by the dynamics of Equation (\ref{equation-base})). The only ingredient to specify here is how the $K$-rough-path structure can be related to the regularity structure/model terminology used in \cite{hai-14}. A part of this connection has already been exhibited in \cite[Section 2.1]{deya} for $\al \in (-\frac43,-1]$. Taking up the notations $(A,\mathscr{T})$ of the latter reference and following the general procedure described in \cite[Section 8]{hai-14}, the regularity structure $(\bar{A},\bar{\mathscr{T}})$ to be considered in our situation is given by
$$\bar{A}:=A \cup \{3\al+4,2\al+4\} \quad , \quad \bar{\mathscr{T}}:=\mathscr{T} \oplus \mathscr{T}_{3\al+4} \oplus \mathscr{T}_{2\al+4}$$
$$\mathscr{T}_{2\al+4}:=\text{Span}\{\ci(\Xi \ci(\Xi))\} \quad , \quad \mathscr{T}_{3\al+4}:=\text{Span}\{\Xi\ci(\Xi \ci(\Xi)),\Xi\ci(\Xi)^2\} \ .$$
Then, given a $K$-rough path $\mathbf{X}$ above a distribution $X\in \cac^\al_c(\R^2)$, we define the two maps
$$\gga^{\mathbf{X}}: \R^2 \times \R^2 \to \cl(\bar{\mathscr{T}}) \quad \text{and} \quad \Pi^{\mathbf{X}}: \R^2 \to \cl(\bar{\mathscr{T}},\cs'(\R^2)) $$
as follows. For any $\beta\in A$ and $\tau\in \mathscr{T}_\be$, we define both $\gga^{\mathbf{X}}_{xy}(\tau)$ and $\Pi^{\mathbf{X}}_x(\tau)$ ($x,y\in \R^2$) just as in \cite[Section 2.1]{deya}. Besides, we set, for all $x=(x_0,x_1),y=(y_0,y_1)\in \R^2$,
$$\Pi^{\mathbf{X}}_x(\Xi \ci(\Xi \ci(\Xi))):=\mathbf{X}^{\mathbf{3.1}}_x \quad , \quad \Pi^{\mathbf{X}}_x(\Xi \ci(\Xi)^2):=\mathbf{X}^{\mathbf{3.2}}_x \quad , \quad \Pi^{\mathbf{X}}_x(\ci(\Xi \ci(\Xi)))(y):=\mathbf{X}^{\mathbf{2}}_x(K^{(2)}_{x,y}) \ ,$$
and
\begin{equation}\label{new-gga}
\gga^{\mathbf{X}}_{xy}(\ci(\Xi \ci(\Xi))):=\ci(\Xi \ci(\Xi))-a_{xy} \, Y_1-b_{xy} \, \ci(\Xi)-c_{xy} \, \1 \ ,
\end{equation}
with
$$a_{xy}:=\mathbf{X}^{\mathbf{2}}_y((D^{(0,1)}K)(y-.))-\mathbf{X}^{\mathbf{2}}_x((D^{(0,1)}K)(x-.)) \quad , \quad b_{xy}:=\mathbf{X}^{\mathbf{1}}(K^{(1)}_{xy})$$
$$c_{xy}:=\mathbf{X}^{\mathbf{2}}_x(K^{(2)}_{x,y})-\mathbf{X}^{\mathbf{1}}(K^{(1)}_{x,y})\, b_{xy}-(y_1-x_1) \, a_{xy} \ .$$
In (\ref{new-gga}), the notation $Y_1$ refers to the abstract symbol for the first-order monomial in the $x_1$-variable (for instance, $\Pi^{\mathbf{X}}_x(Y_1)(y)=y_1-x_1$). Finally, set 
$$\gga^{\mathbf{X}}_{xy}(\Xi \ci(\Xi \ci(\Xi))):=\gga^{\mathbf{X}}_{xy}(\Xi)\star \gga^{\mathbf{X}}_{xy}(\Xi \ci(\Xi \ci(\Xi))) \quad \text{and} \quad \gga^{\mathbf{X}}_{xy}(\Xi \ci(\Xi)^2):=\gga^{\mathbf{X}}_{xy}(\Xi)\star \gga^{\mathbf{X}}_{xy}(\ci(\Xi))\star \gga^{\mathbf{X}}_{xy}(\ci(\Xi)) \ ,$$
where the notation $\star$ refers to the product of the regularity structure (see \cite[Section 2.1]{deya}). 

\smallskip

Using the properties contained in the very definition of a $K$-rough path, one can now check that for all $x,y\in \R^2$, the identity $\Pi^{\mathbf{X}}_y=\Pi^{\mathbf{X}}_x \circ \gga^{\mathbf{X}}_{xy}$ holds true, while for every compact set $\compac\subset \R^2$, it holds, with the notations of \cite[Section 2.3]{hai-14}, 
$$\lVert (\Pi^{\mathbf{X}},\gga^{\mathbf{X}})\rVert_{\al;\compac}\lesssim \lVert \mathbf{X}\rVert_{\al;\bar{\compac}} \quad , \quad \lVert (\Pi^{\mathbf{X}},\gga^{\mathbf{X}});(\Pi^{\mathbf{Y}},\gga^{\mathbf{Y}})\rVert_{\al;\compac}\lesssim \lVert \mathbf{X};\mathbf{Y}\rVert_{\al;\bar{\compac}} \ ,$$
for some compact set $\bar{\compac} \supset \compac$. 

\smallskip

This leads us to the desired conclusion: the above-described pair $(\Pi^{\mathbf{X}},\gga^{\mathbf{X}})$ does define a \emph{model} for the regularity structure $\bar{\struc}$, in the sense of \cite[Definition 2.17]{hai-14}. The construction has actually been designed in such a way that if $X$ is a smooth path and $\mathbf{X}$ stands for the canonical $(\al,K)$-rough path above $X$ (along Definition \ref{defi:canonical}), then the associated model $(\Pi^{\mathbf{X}},\gga^{\mathbf{X}})$ coincides with the \emph{canonical model} described in \cite[Section 8.2]{hai-14}.

\smallskip

Based on these observations, the properties $(i)$-$(iv)$ of our statement are now mere consequences of general results from \cite{hai-14} (see also \cite[Section 2]{deya} for a shorter presentation applying specifically to Equation (\ref{equation-base-x})). For the sake of conciseness, we will not return to the details of this sophisticated analysis.  

\end{proof}

\subsection{Main results}\label{subsec:main-results}
Now endowed with the above preliminary - deterministic - material, we can turn to the detailled presentation of our main result, namely the exhibition of a $K$-rough path above the stochastic fractional sheet $B$. This construction will be based on a chaos decomposition of the canonical $K$-rough path $B^n$, a strategy that will naturally lead us to the consideration of trace-type terms. The procedure will more specifically involve a series of operators $L,\Theta^{(i)},L^{i,X}$ that we propose to introduce right now. Let us recall that, throughout the paper, we denote by $\mathcal{F}$ the usual Fourier transformation, that is
$$\fouri{\vp}(\xi,\eta):=\int_{\R^2} dx_0 dx_1 \, e^{-\imath \xi x_0} e^{-\imath \eta x_1} \vp(x_0,x_1) \ .$$

\smallskip

First, for every $(H_1,H_2)$ such that $2H_1+H_2 >\frac32$, every $x=(x_0,x_1)\in \R^2$ and every smooth compactly-supported function $\psi$ on $\R^2$, we set
\begin{equation}
L_{x}(\psi):=-c_{H_1,H_2}^2 \iint_{\R^2}d\xi d\eta \, \frac{e^{\imath \xi x_0} e^{\imath \eta x_1}}{|\xi|^{2H_1-1} |\eta|^{2H_2-1}}\mathcal{F}(K)(\xi,\eta)\, \mathcal{F}(\psi)(\xi,\eta) \ .
\end{equation}
where $c_{H_1,H_2}$ refers to the constant in the representation (\ref{representation-sheet}) of the fractional sheet. Then, for every $X\in \cac^\al_c(\R^2)$ (with $\al\in (-\frac32,-\frac43]$) and every $x=(x_0,x_1),y=(y_0,y_1)\in \R^2$, we define successively
$$\Theta^{(1)}_{x}(\psi,X)(y):=\psi(y) \, X\big( K^{(1)}_{x,y}\big) \quad , \quad \Theta^{(2)}_{x}(\psi,X)(y):=\psi(y) \, X\big( K^{(1)}_{x,y}\big)\, (y_1-x_1)$$
$$\Theta^{(3)}_{x}(\psi,X)(y):=\iint_{\R^2} dz \, \psi(y+z) \, X\big(K^{(1)}_{y+z,z}\big) \, K^{(2)}_{x,y+z}(z) \ ,$$
and finally
\begin{equation*}
L^{1,X}_{x}(\psi):= -c_{H_1,H_2}^2 \iint_{\R^2}d\xi d\eta\, \frac{e^{\imath \xi x_0} e^{\imath \eta x_1}}{|\xi|^{2H_1-1} |\eta|^{2H_2-1}}\mathcal{F}(K)(\xi,\eta)\,\mathcal{F}\big(\Theta^{(1)}_{x}(\psi,X)\big)(\xi,\eta)  \ , 
\end{equation*}
\begin{equation*}
L^{2,X}_{x}(\psi):=- c_{H_1,H_2}^2 \iint_{\R^2}d\xi d\eta\, \frac{e^{\imath \xi x_0} e^{\imath \eta x_1}}{|\xi|^{2H_1-1} |\eta|^{2H_2-1}}\mathcal{F}(D^{(0,1)}K)(\xi,\eta)\,\mathcal{F}\big(\Theta^{(2)}_{x}(\psi,X)\big)(\xi,\eta)  \ , 
\end{equation*}
\begin{equation*}
L^{3,X}_{x}(\psi):=c_{H_1,H_2}^2 \iint_{\R^2}\frac{d\xi d\eta}{|\xi|^{2H_1-1} |\eta|^{2H_2-1}}\mathcal{F}\big(\Theta^{(3)}_{x}(\psi,X)\big)(\xi,\eta)  \ . 
\end{equation*}

\smallskip

\begin{remark}
Checking the well-posedness of the above operators is one of the objectives behind the computations of Section \ref{sec:proof-main-results} and Appendix \ref{append}. Therefore, we refer the reader to these sections for a detailled analysis of $L$, $\Theta^{(i)}$ and $L^{i,X}$. In fact, the treatment of these trace operators (which are specific to the fractional situation, as emphasized by Proposition \ref{prop:ito}) will prove to be one of the most technical parts of our study.  
\end{remark}

\smallskip

\begin{theorem}[Existence of a fractional $K$-rough path]\label{theo:main}
 Fix $(H_1,H_2) \in (0,1)^2$ and $\al<0$ such that 
$$\frac32 <2H_1+H_2 \leq 2 \quad , \quad -\frac32<\al<\min(-\frac43,-3+2H_1+H_2) \ ,$$
and let $B$ be a $(H_1,H_2)$-fractional sheet defined on some complete probability space $(\Omega,\mathfrak{F},\mathbb{P})$, with representation (\ref{representation-sheet}). Let $B^n$ be the approximation of $B$ given by (\ref{approx-noise}) and, for a fixed localized heat kernel $K$, denote by $\mathbf{B}^n$ the canonical $K$-rough path (of order $\al$) above $\dot{B}^n:=\partial_t\partial_x B^n$, in the sense of Definition \ref{defi:canonical}. Then there exists a $K$-rough path $\widehat{\mathbf{B}}$ above $\dot{B}:=\partial_t\partial_x B$ such that if we set $\widehat{\mathbf{B}}^n:=\text{Renorm}\, (\mathbf{B}^n,c^n_{H_1,H_2})$ with
\begin{equation}\label{defi-seq-reno}
c^n_{H_1,H_2}:=c_{H_1,H_2}^2\iint_{\cd_n}\frac{d\xi \, d\eta  }{|\xi|^{2H_1-1}|\eta|^{2H_2-1}}\fouri{K}(\xi,\eta) \ ,
\end{equation}
one has almost surely
$$d_\al \big(\widehat{\mathbf{B}}^n,\widehat{\mathbf{B}} \big) \stackrel{n\to\infty}{\longrightarrow} 0 \quad .$$
Moreover, the following identification formulas hold true for the components of $\widehat{\mathbf{B}}$: for every $x\in \R^2$ and every smooth compactly-supported function $\psi$, one has almost surely
\begin{equation}\label{def:hat-b-order-1}
\widehat{\mathbf{B}}^{\mathbf{1}}(\psi)=\dot{B}(\psi) \ ,
\end{equation}
\begin{equation}\label{def:hat-b-order-2}
\widehat{\mathbf{B}}^{\mathbf{2}}_{x}(\psi)
=\delta^{B}\big( \psi \cdot \widehat{\mathbf{B}}^{\mathbf{1}}\big(K^{(1)}_{x,.}\big) \big)+L_{x}(\psi) \ ,
\end{equation}
\begin{equation}\label{def:hat-b-order-3}
\widehat{\mathbf{B}}^{\mathbf{3.1}}_{x}(\psi)
=\delta^{B}\big( \psi \cdot \widehat{\mathbf{B}}^{\mathbf{2}}_{x}\big(K^{(2)}_{x,.}\big) \big)+\big[ L^{1,\dot{B}}_{x}(\psi)+L^{2,\dot{B}}_{x}(\psi)+L^{3,\dot{B}}_{x}(\psi)\big] \ ,
\end{equation}
\begin{equation}\label{def:hat-b-order-3-1}
\widehat{\mathbf{B}}^{\mathbf{3.2}}_{x}(\psi)
=\delta^{B}\big( \psi \cdot \big(\widehat{\mathbf{B}}^{\mathbf{1}}\big(K^{(1)}_{x,.}\big)\big)^2 \big)+2\, L^{1,\dot{B}}_{x}(\psi) \ ,
\end{equation}
where the notation $\delta^B$ refers to the Skorohod integral with respect to $B$ (see Section \ref{sec:mall-cal}).
\end{theorem}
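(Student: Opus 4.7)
The plan is to work at the level of the smooth approximation $B^n$, where the canonical $K$-rough path $\mathbf{B}^n$ is a polynomial functional in the first Wiener chaos associated with $\dot{B}^n$, and to show that each component of $\widehat{\mathbf{B}}^n:=\text{Renorm}(\mathbf{B}^n,c^n_{H_1,H_2})$ admits a Wiener chaos decomposition whose terms individually converge, yielding in the limit the expressions (\ref{def:hat-b-order-1})--(\ref{def:hat-b-order-3-1}). The renormalization constant $c^n_{H_1,H_2}$ defined in (\ref{defi-seq-reno}) is precisely designed to absorb the divergent zeroth-chaos contribution appearing in the decomposition of $\mathbf{B}^{n,\mathbf{2}}_x(\psi)$; according to Lemma \ref{lem:transfo-m}, this subtraction automatically adjusts the first-chaos trace terms of the third-order components as well.

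The first step, to be carried out via the Malliavin calculus developed in Section \ref{sec:mall-cal}, consists in applying the product formula for multiple Wiener integrals to the iterated pairings in (\ref{defi-can-rp-1})--(\ref{defi-can-rp-2}). For $\mathbf{B}^{n,\mathbf{2}}_x(\psi)$ this produces a second-chaos Skorohod integral $\delta^{B}(\psi\cdot \dot{B}^n(K^{(1)}_{x,.}))$ plus the deterministic trace $\int\psi(y)\,\mathbb{E}[\dot{B}^n(K^{(1)}_{x,y})\dot{B}^n(y)]\,dy$, which, using the harmonizable representation (\ref{approx-noise}) and the isometry of $\mathcal{F}(W)$, evaluates to $c^n_{H_1,H_2}\psi(x)+L^{n}_x(\psi)$ with $L^{n}_x(\psi)\to L_x(\psi)$. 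The third-order components are treated analogously: each product of three first-chaos factors expands into a top third-chaos Skorohod integral plus a sum of first-chaos traces, one for each way of contracting two of the three $\dot{B}^n$ factors. A careful bookkeeping shows that these contractions give rise precisely to the kernels $\Theta^{(1)},\Theta^{(2)},\Theta^{(3)}$, and therefore, after incorporating the covariance weight coming from (\ref{approx-noise}), to $L^{1,\dot{B}^n},L^{2,\dot{B}^n},L^{3,\dot{B}^n}$. The renormalization subtraction from Lemma \ref{lem:transfo-m} removes exactly the ill-behaved parts of these traces.

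The second step is to pass to the limit. For each chaos component and each test function $\varphi_x^\ell$, I would establish moment bounds of the form
\[
\mathbb{E}\bigl[\bigl|\langle \widehat{\mathbf{B}}^{n,\mathbf{i}}_x-\widehat{\mathbf{B}}^{m,\mathbf{i}}_x,\varphi_x^\ell\rangle\bigr|^p\bigr]\lesssim \omega_{n,m}\, 2^{-p\beta_i \ell},
\]
with $\beta_1=\al$, $\beta_2=2\al+2$, $\beta_{3.\cdot}=3\al+4$ and $\omega_{n,m}\to 0$. Thanks to hypercontractivity on each fixed Wiener chaos, these estimates reduce to second-moment computations performed in the Fourier domain, where the fractional weight $|\xi|^{-(2H_1-1)}|\eta|^{-(2H_2-1)}$ combines with the kernel bound of Lemma \ref{lem:estim-k-hat} to yield integrable singularities as long as $2H_1+H_2>\tfrac32$. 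To convert these pointwise estimates into control on the norms $\lVert\cdot\rVert_{\beta_i;\compac}$ appearing in the distance $d_\al$, I would invoke the Multiparameter Garsia--Rodemich--Rumsey Lemma \ref{GRR-gene}, whose increment hypothesis (\ref{condit-grr}) is automatically met by the map-valued components through the $K$-Chen relations (\ref{k-chen-relation})--(\ref{k-chen-relation-third-order}) applied to the difference $\widehat{\mathbf{B}}^n-\widehat{\mathbf{B}}^m$. A standard Kolmogorov/Borel--Cantelli argument along a geometric subsequence then delivers almost sure convergence in $d_\al$, and the limit $\widehat{\mathbf{B}}$ inherits the $K$-Chen relations by continuity.

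The main obstacle is the analysis of $\widehat{\mathbf{B}}^{n,\mathbf{3.1}}$, where three genuinely distinct contraction patterns coexist. Proving that the three trace operators $L^{i,\dot{B}^n}$ ($i=1,2,3$) are well defined and converge as random distributions to $L^{i,\dot{B}}$ demands a delicate Fourier-analytic accounting, especially in the borderline regime where $2H_1+H_2$ is close to $\tfrac32$ and $\al$ is close to $-\tfrac32$: the combined singularities near the origin and at large frequencies sit at the edge of integrability, and one must exploit the precise cancellations encoded in $K^{(1)}$ and $K^{(2)}$ (as quantified in Appendix \ref{append}) together with the Fourier decay of $\fouri{K}$ supplied by Lemma \ref{lem:estim-k-hat}. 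Once these bounds are in hand, all remaining computations are essentially Gaussian, and the passage to the limit in the Skorohod integrals follows from standard continuity of $\delta^B$ on the relevant Malliavin domains.
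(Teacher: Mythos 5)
Your proposal follows essentially the same two-step strategy as the paper: a Malliavin/Skorohod chaos decomposition of each component of $\widehat{\mathbf{B}}^{n}$ (producing the top-chaos Skorohod integral plus the contraction traces $L^n,L^{i,n,\dot{B}^n}$, with the renormalization constant absorbing the divergent zeroth-chaos term), followed by second-moment convergence of each summand, hypercontractivity, and the multiparameter Garsia--Rodemich--Rumsey lemma combined with the $K$-Chen relations to obtain Cauchyness in $(\mathcal{E}_{K,\al},d_\al)$. Apart from minor slips (the level-$n$ integrals should be $\delta^{B^n}$ rather than $\delta^{B}$, and the subtracted trace is $c^n_{H_1,H_2}\int\psi$ rather than $c^n_{H_1,H_2}\psi(x)$), this matches the paper's proof.
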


\smallskip

\begin{remark}
Using the Malliavin-calculus terminology, decompositions (\ref{def:hat-b-order-2})-(\ref{def:hat-b-order-3-1}) are of course nothing but the expansions of the components of $\widehat{\mathbf{B}}_{x}(\psi)$ into the chaoses associated with the fractional noise $\dot{B}$. For instance, in (\ref{def:hat-b-order-3}), $\delta^{B}( \psi \cdot \widehat{\mathbf{B}}^{\mathbf{2}}_{x}(K^{(2)}_{x,.}) )$ stands for the component in the third-order chaos, while $L^{1,\dot{B}}_{x}(\psi)$, $L^{2,\dot{B}}_{x}(\psi)$ and $L^{3,\dot{B}}_{x}(\psi)$ all belong to the first-order chaos.
\end{remark}

\begin{remark}
By following the arguments of the proof of \cite[Proposition 3.12]{deya}, it is easy to show that the quantity $c^n_{H_1,H_2}$ defined by (\ref{defi-seq-reno}) asymptotically behaves as
\begin{equation}\label{estim-cstt}
c^n_{H_1,H_2}\stackrel{n\to \infty}{\sim} 
\left\lbrace
\begin{array}{ll}
d^1_{H_1,H_2}\cdot  2^{2n(2-2H_1-H_2)}& \quad \text{if} \quad \frac32 < 2H_1+H_2 <2 \ ,\\
d^2_{H_1,H_2}\cdot  n & \quad \text{if} \quad 2H_1+H_2 =2 \ ,
\end{array}
\right.
\end{equation}
for some constants $d^1_{H_1,H_2},d^2_{H_1,H_2}$, which points out the actual divergence of the canonical $K$-rough path $\mathbf{B}^n$ as soon as $2H_1+H_2\leq 2$.
\end{remark}

\smallskip

In the very particular situation of a white-in-time noise, that is when $H_1=\frac12$ - and the whole Itô integration theory becomes available -, the symmetry in representation (\ref{representation-sheet}), combined with the vanishing properties of any (localized) heat kernel, offers a drastic simplification of both the analysis and the results:

\begin{proposition}[White-in-time noise]\label{prop:ito}
In the setting of Theorem \ref{theo:main}, assume that $H_1=\frac12$ and $H_2>\frac12$, that is $B$ is a white-in-time noise with fractional spatial regularity of order $H_2>\frac12$. Then, with the above notations, one has, for every test-function $\psi$ with support included in the set $\{x\in \R^2: \ x_0\geq 0\}$ and every $x\in \R^2$,
$$ L_{x}(\psi_{x})=L^{1,\dot{B}}_{x}(\psi_{x})=L^{2,\dot{B}}_{x}(\psi_{x})=L^{3,\dot{B}}_{x}(\psi_{x})=0 \ ,$$
where we have set $\psi_x(y):=\psi(y-x)$. Accordingly, in this case, and when applied to such test-functions, the $K$-rough path $\widehat{\mathbf{B}}$ exhibited in Theorem \ref{theo:main} reduces (almost surely) to the Itô $K$-rough path above $\dot{B}$, that is to the $K$-rough path extending Formulas (\ref{defi-can-rp-1})-(\ref{defi-can-rp-2}) by means of Itô integrals.
\end{proposition}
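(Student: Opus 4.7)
My plan rests on a single structural simplification: when $H_1 = \tfrac12$, the temporal Fourier weight $|\xi|^{2H_1-1}$ appearing in the definitions of $L_x$ and $L^{i,\dot B}_x$ is identically $1$, so the $\xi$-integration decouples and reduces to a plain Fourier-inversion/Plancherel identity in the time variable. Combined with the causality of any localized heat kernel (item $(i)$ of Definition \ref{defi:loc-heat-ker}: $K(x)=0$ whenever $x_0\leq 0$, which immediately implies the same for $D^{(0,1)}K$), and with the support hypothesis $\supp(\psi) \subset \{y_0 \geq 0\}$, this will force each of the four correction terms to vanish when applied to $\psi_x$.

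I would first treat the three operators $L_x$, $L^{1,\dot B}_x$, $L^{2,\dot B}_x$ in a unified way, since they share the structure $\iint d\xi\, d\eta\, e^{\imath(\xi x_0+\eta x_1)}|\eta|^{1-2H_2}\,\mathcal{F}(J)(\xi,\eta)\,\mathcal{F}(h)(\xi,\eta)$ with $J\in\{K,D^{(0,1)}K\}$ and $h\in\{\psi_x,\Theta^{(1)}_x(\psi_x,\dot B),\Theta^{(2)}_x(\psi_x,\dot B)\}$. Using $\mathcal{F}(\psi_x)(\xi,\eta)=e^{-\imath(\xi x_0+\eta x_1)}\mathcal{F}(\psi)(\xi,\eta)$ together with the analogous spatial translations for $\Theta^{(1)}$ and $\Theta^{(2)}$, the prefactor $e^{\imath\xi x_0}$ cancels and only an $\eta$-dependent phase survives. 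The $\xi$-integration then reduces, via the convolution identity $\int d\xi\,\hat f(\xi)\hat g(\xi)=2\pi\int ds\,f(-s)g(s)$, to an expression $\int ds\,\tilde J(-s,\eta)\,\tilde h_*(s,\eta)$, where the tilde denotes partial Fourier transform in the spatial variable and $h_*$ is the translated integrand. Now $\tilde J(-s,\eta)$ vanishes for $s>0$ because $J(\cdot,x_1)$ is supported in $[0,\infty)$, whereas $\tilde h_*(s,\eta)$ vanishes for $s<0$ because $h_*(\cdot,u_1)$ is supported in $[0,\infty)$. The product is thus zero outside the measure-zero set $\{s=0\}$, and the integral vanishes.

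The operator $L^{3,\dot B}_x$ has a different shape (no $\mathcal{F}(K)$ factor and no exponential prefactor), so I would instead apply direct Fourier inversion in $\xi$ to obtain $L^{3,\dot B}_x(\psi_x) = 2\pi c_{H_1,H_2}^2 \int d\eta\,|\eta|^{1-2H_2}\int du_1\,e^{-\imath\eta u_1}\,\Theta^{(3)}_x(\psi_x,\dot B)((0,u_1))$, and then show that the integrand already vanishes pointwise. Expanding $K^{(2)}_{x,(0,u_1)+z}(z) = K((0,u_1)) - K(x-z) - (u_1+z_1-x_1)(D^{(0,1)}K)(x-z)$, the first term is zero by the causality of $K$, while the remaining two vanish as soon as $z_0\geq x_0$; but $\psi_x((0,u_1)+z)=\psi((z_0-x_0,u_1+z_1-x_1))$ vanishes whenever $z_0<x_0$. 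The two supports are therefore disjoint up to the measure-zero set $\{z_0=x_0\}$, forcing $\Theta^{(3)}_x(\psi_x,\dot B)((0,u_1))=0$.

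Once the four vanishing results are established, formulas \eqref{def:hat-b-order-2}--\eqref{def:hat-b-order-3-1} collapse to pure Skorohod integrals of integrands that, thanks again to the causality of $K^{(1)}_{x,\cdot}$ and $K^{(2)}_{x,\cdot}$ (whose supports in the integration variable $z$ lie below $\max(x_0,y_0)=y_0$ under our support assumption on $\psi$), are adapted to the natural temporal filtration of $\dot B$. In the white-in-time regime the Skorohod integral of an adapted process coincides with its It\^o integral, so $\widehat{\mathbf{B}}$ reduces exactly to the $K$-rough path obtained by iterating \eqref{defi-can-rp-1}--\eqref{defi-can-rp-2} through It\^o integration. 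The main technical obstacle I anticipate is the distributional care required by the singularity of $K$ at the origin: the support-based vanishing arguments need to be made rigorous via the dyadic truncations $K^n$ of \eqref{decompo-k} and the convergence properties recalled after Definition \ref{defi:k-rough-path}.
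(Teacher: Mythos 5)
Your proposal is correct and follows essentially the same route as the paper: with $H_1=\tfrac12$ the weight $|\xi|^{2H_1-1}$ disappears, Plancherel in the time variable converts each $\xi$-integral into a product of functions whose temporal supports are disjoint (causality of $K$, hence of $D^{(0,1)}K$, against $\supp\psi\subset\{x_0\geq 0\}$), and the identification with the It\^o $K$-rough path then follows from the Skorohod--It\^o coincidence for adapted integrands. The only difference is one of detail: the paper writes out the computation for $L_x$ and dispatches $L^{1},L^{2},L^{3}$ with ``the same combination of arguments,'' whereas you explicitly carry out the slightly different inversion needed for $L^{3,\dot B}_x$ (where no $\mathcal{F}(K)$ factor is present), which is a welcome elaboration rather than a deviation.
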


\begin{proof}
Since $H_1=\frac12$, we can lean on the basic isometry properties of the Fourier transform to assert that  
\begin{eqnarray*}
L_x(\psi_x)&=&-c_{H_1,H_2}^2\int_{\R}\frac{d\eta}{|\eta|^{2H_2-1}} e^{\imath \eta x_1}\int_{\R^2} dy_1dz_1 \, e^{-\imath \eta (y_1+z_1)}\int_{\R} d\xi \, e^{\imath \xi x_0} \, \mathcal{F}(K(.,y_1))(\xi) \mathcal{F}(\psi_x(.,z_1))(\xi)\\
&=&c\int_{\R}\frac{d\eta}{|\eta|^{2H_2-1}} e^{\imath \eta x_1}\int_{\R^2} dy_1dz_1 \, e^{-\imath \eta (y_1+z_1)}\int_{\R} dy_0 \, K(x_0-y_0,y_1) \psi(y_0-x_0,z_1-x_1) \ .
\end{eqnarray*}
It now suffices to observe that due to the vanishing assumption on $K$ and the support condition on $\psi$, the latter integral is necessarily equal to zero. The same combination of arguments (isometry property and support condition) can also be used to show that $L^{1,\dot{B}}_{x}(\psi_{x})=L^{2,\dot{B}}_{x}(\psi_{x})=L^{3,\dot{B}}_{x}(\psi_{x})=0$.

\smallskip

The identification with the Itô $K$-rough path immediately follows from Formulas (\ref{def:hat-b-order-1}) to (\ref{def:hat-b-order-3-1}) and the fact that Skorohod integrals are known to coincide with Itô integrals in this situation (see for instance \cite{grorud-pardoux}). 

\end{proof}

\smallskip

We can finally combine the above construction with the general results of \cite{hai-14} about Equation (\ref{equation-base}), as we summed them up through Proposition \ref{prop:link-model}:

\begin{corollary}[Application to the equation]\label{coro:equation}
Fix $(H_1,H_2) \in (0,1)^2$ such that $2H_1+H_2 >\frac32$, $\al\in (-\frac32,-3+2H_1+H_2)$, and let $B$ be a $(H_1,H_2)$-fractional sheet defined on some complete probability space $(\Omega,\mathfrak{F},\mathbb{P})$, with representation (\ref{representation-sheet}). Fix an arbitrary time horizon $T>0$ and let $F:\R^2\to \R$ be a vector field that satisfies the assumptions of Proposition \ref{prop:link-model}. Also, consider a sequence of bounded deterministic initial conditions $\Psi^n $ that converges in $L^\infty (\R)$ to some element $\Psi$, and set, with the notations of Proposition \ref{prop:link-model} and Theorem \ref{theo:main}, $(T_0,Y):=\Phi^K_{T,F}(\widehat{\mathbf{B}})$. 

\smallskip

Then for every $0<T_1<T_0$ and as $n$ tends to infinity, the sequence $Y^n$ of classical solutions of the (renormalized) equation
\begin{equation}\label{eq-base}
\left\{\begin{array}{ccl}
\partial_t Y^n (t,x) &= &\partial^2_x Y^n(t,x)+F(x,Y^n(t,x)) \,\partial_t\partial_x B^n (t,x)-c^n_{H_1,H_2}\, F(x,Y^n(t,x))\, \partial_2F(x,Y^n(t,x))\ ,\\
Y^n(0,x) &= &\Psi^n(x) \ , 
\end{array}\right.
\end{equation}
a.s. converges in $L^\infty([0,T_1]\times \R)$ to $Y$, that is to the solution on $[0,T_0]$ of the equation
\begin{equation}\label{regu-struc-equation}
\partial_t Y=\partial^2_x Y+F(.,Y) \, \widehat{\mathbf{B}} \quad , \quad Y_0=\Psi \ ,
\end{equation}
understood in the sense of the $K$-rough paths. When $H_1=\frac12$ and $H_2 > \frac12$, one has $T_0=T$ and the solution $Y$ a.s. coincides with the solution on $[0,T]$ of the equation
\begin{equation}\label{ito-equation}
\partial_t Y=\partial^2_x Y+F(.,Y) \, \partial_t\partial_x B \quad , \quad Y_0=\Psi \ ,
\end{equation}
understood in the classical Itô sense. 
\end{corollary}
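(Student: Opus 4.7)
The strategy is to lift the $K$-rough-path convergence of Theorem \ref{theo:main} to the level of the equations via the solution map $\Phi^K_{T,F}$ of Proposition \ref{prop:link-model}. Since each $B^n$ is smooth, Proposition \ref{prop:link-model}(ii) applied with the continuous driver $X=\partial_t\partial_x B^n$ and renormalization constant $c=c^n_{H_1,H_2}$ identifies
\[
\Phi^K_{T,F}\big(\widehat{\mathbf{B}}^n,\Psi^n\big)=(T,Y^n),
\]
where $Y^n$ is the classical solution on $[0,T]$ of the renormalized PDE \eqref{eq-base}; in particular the associated lifetimes satisfy $T^n=T$ for every $n$. Theorem \ref{theo:main} provides $d_\al(\widehat{\mathbf{B}}^n,\widehat{\mathbf{B}})\to 0$ almost surely, while the assumption on the initial data gives $\lVert\Psi^n-\Psi\rVert_{L^\infty(\R)}\to 0$; Proposition \ref{prop:link-model}(iii) then yields the almost-sure convergence $Y^n\to Y$ in $L^\infty([0,T_1]\times\R)$ for every $T_1<T_0\wedge\inf_n T^n=T_0\wedge T=T_0$ (where one uses $T_0\leq T$), which is exactly the first assertion of the corollary.

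Turning to the It\^o case $H_1=\tfrac12$, $H_2>\tfrac12$, Proposition \ref{prop:ito} already identifies $\widehat{\mathbf{B}}$ with the It\^o $K$-rough path above $\dot B$, so all that is left is to prove that $Y=\Phi^K_{T,F}(\widehat{\mathbf{B}},\Psi)$ agrees, on $[0,T]$, with the classical It\^o mild solution $\widetilde Y$ of \eqref{ito-equation}, and to rule out blow-up so that $T_0=T$. The plan is a Wong--Zakai argument. By the first part of the corollary, $Y^n\to Y$ a.s.\ in $L^\infty([0,T_1]\times\R)$ for every $T_1<T_0$. On the other hand, at $H_1=\tfrac12$ the constant $c^n_{H_1,H_2}$ in \eqref{defi-seq-reno} has exactly the shape of the It\^o--Stratonovich correction for a smooth-in-time approximation of a space-coloured, time-white noise (the $\xi$-factor $|\xi|^{2H_1-1}$ collapses to $1$, so the integrand reduces to $\fouri{K}(\xi,\eta)/|\eta|^{2H_2-1}$, which is precisely the kernel appearing in the quadratic-covariation correction of the mild It\^o formulation). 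A standard stability estimate for mild solutions of It\^o SPDEs driven by a space-coloured, time-white noise with spatial regularity $H_2>\tfrac12$ therefore produces $Y^n\to\widetilde Y$ in $L^\infty([0,T]\times\R)$. Matching the two limits on $[0,T_1]$ for every $T_1<T_0$ forces $Y=\widetilde Y$ there; since $\widetilde Y$ is globally bounded on $[0,T]$, the blow-up alternative Proposition \ref{prop:link-model}(iv) excludes $T_0<T$, giving simultaneously $T_0=T$ and the desired identification.

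The main obstacle is clearly the It\^o identification in the second paragraph, as the first paragraph is mechanical bookkeeping through the continuous solution map. Two points must be verified with care: first, the asymptotic identification of $c^n_{H_1,H_2}$ with the genuine It\^o--Stratonovich correction at $H_1=\tfrac12$, which requires isolating the $\xi$-integration (where the spectral weight trivialises) and comparing the remaining $\eta$-integral against the spatial covariance of $B$; second, the classical Wong--Zakai convergence $Y^n\to\widetilde Y$, which is the parabolic analogue of the scalar result and of the argument used in \cite{hairer-pardoux} for the $H_1=H_2=\tfrac12$ situation, here adapted to the better spatial regularity $H_2>\tfrac12$.
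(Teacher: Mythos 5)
Your first paragraph is exactly the paper's argument (the paper compresses it to one sentence): Proposition \ref{prop:link-model}$(ii)$ identifies $\Phi^K_{T,F}(\widehat{\mathbf{B}}^n,\Psi^n)=(T,Y^n)$ with $Y^n$ the classical solution of \eqref{eq-base}, and Proposition \ref{prop:link-model}$(iii)$ combined with Theorem \ref{theo:main} gives the almost-sure convergence $Y^n\to Y$ in $L^\infty([0,T_1]\times\R)$. That part is fine.

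The second part contains a genuine gap. You propose to prove $Y^n\to\widetilde Y$ (the classical It\^o solution) directly by \enquote{a standard stability estimate for mild solutions of It\^o SPDEs}. No such standard estimate applies here: $Y^n$ solves a deterministic (pathwise) parabolic PDE driven by the smooth but \emph{non-adapted} field $\partial_t\partial_x B^n$ (the spectral truncation $\1_{D_n}$ destroys adaptedness in time), together with a correction term whose constant $c^n_{H_1,H_2}$ actually \emph{diverges} for every $H_2<1$ when $H_1=\tfrac12$ (since then $2H_1+H_2=1+H_2\leq 2$, see \eqref{estim-cstt}). The convergence of such renormalized classical solutions to the It\^o solution is precisely the Wong--Zaka\"{\i} theorem — the hard result of \cite{hairer-pardoux}, proved there with the full regularity-structures machinery — so invoking it as a known stability fact begs the question. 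The paper's route avoids this circularity: Proposition \ref{prop:ito} identifies the \emph{limit object} $\widehat{\mathbf{B}}$ with the It\^o $K$-rough path (a statement about explicit Skorohod/It\^o integrals, not about solutions), and then one appeals to the purely deterministic/martingale \emph{lifting} argument of \cite[Section 6]{hairer-pardoux}, which shows that the abstract solution $\Phi^K_{T,F}(\widehat{\mathbf{B}},\Psi)$ associated with the It\^o model coincides with the classical It\^o mild solution; global existence of the latter then yields $T_0=T$ via Proposition \ref{prop:link-model}$(iv)$, as you correctly note at the end. In short: the identification must be done at the level of the $K$-rough path and then transported through the solution map, not re-derived at the level of the approximating equations.
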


\begin{proof}
The first assertion is a straightforward application of Proposition \ref{prop:link-model} and Theorem \ref{theo:main}. When $H_1=\frac12$, the identification of the solution with Itô's solution is then a consequence of the identification result of Proposition \ref{prop:ito} (on the level of the $K$-rough path): the details of this lifting procedure can be found in \cite[Section 6]{hairer-pardoux}.
\end{proof}

\smallskip

The following basic picture illustrates the domain covered by the combination of the above results with the results of \cite{deya}, regarding the pair $(H_1,H_2)$. Based on Proposition \ref{prop:regu-brow-sh}, the successive stages for the global roughness $\al$ of the noise turn into successive slices for the combination $2H_1+H_2$. The black, resp. red, slice corresponds to the first-order, resp. second-order, situation where $\al\in (-1,0)$, resp. $\al\in (-\frac43,-1]$, and was treated in \cite{deya}. The blue slice corresponds to the setting of the present paper, with $\al\in (-\frac32,-\frac43]$. Its border extends up to the standard space-time white-noise situation ($H_1=H_2=\frac12$), as we pointed it out earlier. 

\

\

\

\

\

\

\begin{center}
\begin{figure}[ht]\label{figure-h1-h2}
\begin{pspicture}(0,0)(5,5)

\psline{->}(-1,0)(7,0) \psline{->}(0,-1)(0,7)

\psline(0,5)(5,5)(5,0)
\psline(2.5,0)(2.5,5)
\psline(2.5,5)(5,0) 
\psline(1.25,5)(3.75,0) 
\psline(1.67,5)(4.17,0) 
\pscircle*(2.5,2.5){0.08}

\pspolygon[fillstyle=vlines,hatchcolor=blue](1.25,5)(3.75,0)(4.17,0)(1.67,5)

\pspolygon[fillstyle=crosshatch,hatchcolor=red](1.67,5)(4.17,0)(5,0)(2.5,5)(1.25,5)

\pspolygon[fillstyle=hlines](2.5,5)(5,0)(5,5)(2.5,5)

\rput(7,-0.5){$H_1$} \rput(-0.8,7){$H_2$} 

 \rput(1.25,5.3){$\frac14$} 

 \rput(1.67,5.3){$\frac13$} 

 \rput(2.5,5.3){$\frac12$} 

 \rput(5,-0.3){$1$} \rput(-0.3,5){$1$}

\end{pspicture}
\end{figure}

\end{center}

\

\

The rest of the paper is now devoted to the proof of Theorem \ref{theo:main}. Therefore, from now on and until the end, we fix, on a complete probability space $(\Omega,\mathcal{F},\mathbb{P})$, a fractional sheet $B$ of Hurst index $(H_1,H_2)\in (0,1)^2$ satisfying 
$$\frac32< 2H_1+H_2\leq 2 \ ,$$
and with representation (\ref{representation-sheet}) with respect to some space-time white noise $W$. Also, we consider the smooth approximation $B^n$ defined by (\ref{approx-noise}) and, for some fixed localized heat kernel $K$, we denote by $\mathbf{B}^{n}$ the canonical $K$-rough path associated with $B^n$, in the sense of Definition \ref{defi:canonical}. Finally, we denote by
$$\widehat{\mathbf{B}}^{n}:=(\widehat{\mathbf{B}}^{\mathbf{1},n},\widehat{\mathbf{B}}^{\mathbf{2},n},\widehat{\mathbf{B}}^{\mathbf{3.1},n},\widehat{\mathbf{B}}^{\mathbf{3.2},n})$$
the renormalized $K$-rough path $\widehat{\mathbf{B}}^{n}:=\text{Renorm}(\mathbf{B}^{n},c^n_{H_1,H_2})$ (see Lemma \ref{lem:transfo-m}), where $c^n_{H_1,H_2}$ is defined by (\ref{defi-seq-reno}).

\smallskip

At this point, it must be recalled that the third-order results of Theorem \ref{theo:main} (covering the situation where $2H_1+H_2>\frac32$) are the continuation of the first and second-order results of \cite{deya} for the more restrictive case where $2H_1+H_2>\frac53$. In particular, the study of the first and second-order components of the renormalized canonical $K$-rough path (i.e., $\widehat{\mathbf{B}}^{\mathbf{1},n}$ and $\widehat{\mathbf{B}}^{\mathbf{2},n}$) can be done along the very same arguments and estimates as in \cite{deya}, which already provides us with the following preliminary statement:

\begin{lemma}(\cite[Corollaries 3.4 and 3.5]{deya}) \label{lem:results-orders-1-2}
For every $\al \in (-\frac32,-3+2H_1+H_2)$, there exist $\ep>0$ and a pair $(\widehat{\mathbf{B}}^{\mathbf{1}},\widehat{\mathbf{B}}^{\mathbf{2}}) \in \cac_c^\al(\R^2)\times  \pmb{\cac}_c^{2\al+2}(\R^2)$ such that
$$\widehat{\mathbf{B}}^{\mathbf{1}}=\partial_t\partial_\xi B \quad , \quad \widehat{\mathbf{B}}^{\mathbf{2}}_{x}-\widehat{\mathbf{B}}^{\mathbf{2}}_{y}= \widehat{\mathbf{B}}^{\mathbf{1}}\big( K^{(1)}_{x,y}\big) \cdot \widehat{\mathbf{B}}^{\mathbf{1}} \ , $$
and for all $n,k,p\geq 1$, 
\begin{equation}\label{estim-mom-x-1}
\mathbb{E}\big[\lVert \widehat{\mathbf{B}}^{\mathbf{1},n}\rVert_{\al;R_k}^p \big] \leq C_{p,\al} \, k^2 \quad , \quad \mathbb{E}\big[\lVert \widehat{\mathbf{B}}^{\mathbf{1},n}-\widehat{\mathbf{B}}^{\mathbf{1}}\rVert_{\al;R_k}^p \big] \leq C_{p,\al} \, k^2 \, 2^{-n\ep p}  \ ,
\end{equation}
\begin{equation}\label{estim-mom-x-2}
\mathbb{E}\big[\lVert \widehat{\mathbf{B}}^{\mathbf{2},n}\rVert_{2\al+2;R_k}^p \big] \leq C_{p,\al} \, k^2 \quad , \quad \mathbb{E}\big[\lVert \widehat{\mathbf{B}}^{\mathbf{2},n}-\widehat{\mathbf{B}}^{\mathbf{2}}\rVert_{2\al+2;R_k}^p \big] \leq C_{p,\al} \, k^2 \, 2^{-n\ep p}  \ ,
\end{equation}
for some constant $C_{p,\al}$, and where we have set $R_k:=[-k,k]^2$. 
\end{lemma}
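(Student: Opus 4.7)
The plan is simply to recall the arguments of \cite[Corollaries 3.4 and 3.5]{deya} and check that they apply verbatim in the present range of parameters. The starting point is the multiparameter Garsia--Rodemich--Rumsey criterion of Lemma \ref{GRR-gene}: in view of the first $K$-Chen relation in (\ref{k-chen-relation}), controlling the Besov norms $\lVert \widehat{\mathbf{B}}^{\mathbf{1},n}\rVert_{\al;R_k}$ and $\lVert \widehat{\mathbf{B}}^{\mathbf{2},n}\rVert_{2\al+2;R_k}$ reduces to uniform moment bounds of the form
$$\esp\bigl[\, |\langle \widehat{\mathbf{B}}^{\mathbf{1},n},\psi_x^\ell\rangle|^p \,\bigr] \lesssim 2^{-\al \ell p} \quad,\quad \esp\bigl[\, |\langle \widehat{\mathbf{B}}^{\mathbf{2},n}_{x},\psi_x^\ell\rangle|^p \,\bigr] \lesssim 2^{-(2\al+2)\ell p} \ ,$$
taken over dyadic points $x\in \Lambda_\scal^\ell \cap R_{k+1}$ and over a finite family of test-functions $\psi\in \cb_0$. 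By Gaussian hypercontractivity on the Wiener chaoses associated with $B$ (or $\cf(W)$), it is enough to do this for $p=2$.

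For the first-order component, the representation (\ref{approx-noise}) and the isometry of $\cf(W)$ give
$$\esp\bigl[\, |\langle \dot{B}^n,\psi_x^\ell\rangle|^2 \,\bigr] = c_{H_1,H_2}^2 \iint_{D_n} \frac{d\xi \, d\eta}{|\xi|^{2H_1-1}|\eta|^{2H_2-1}} \, |\cf(\psi_x^\ell)(\xi,\eta)|^2 \ ,$$
and the scaling of $\psi_x^\ell$ together with the condition $\al < -3+2H_1+H_2$ directly yields the required bound, with an extra factor $2^{-n\ep}$ for the difference $\dot{B}^n-\dot{B}^m$ that arises from restricting the integral to $D_n \setminus D_m$. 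This is exactly the argument of \cite[Proposition 3.3]{deya}, and it produces the Cauchy sequence needed to define $\widehat{\mathbf{B}}^{\mathbf{1}} = \dot{B}$ in $\cac_c^\al(\R^2)$.

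For the second-order component, the main point is to perform the chaos decomposition
$$\widehat{\mathbf{B}}^{\mathbf{2},n}_x(\psi) \;=\; \delta^B\bigl( \psi \cdot \widehat{\mathbf{B}}^{\mathbf{1},n}(K^{(1)}_{x,.})\bigr) \;+\; \bigl[\, L^n_x(\psi)-c^n_{H_1,H_2}\langle\psi,\mathbf{Id}\rangle\,\bigr] \ ,$$
where $L^n_x$ is the truncated version of the trace operator $L_x$ introduced before Theorem \ref{theo:main}. By construction of $c^n_{H_1,H_2}$ in (\ref{defi-seq-reno}), the bracket converges as $n\to\infty$ to $L_x(\psi)$, with a $2^{-n\ep}$ rate on compact sets, and its Besov norm of order $2\al+2$ is controlled through Lemma \ref{lem:estim-k-hat}. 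The Skorohod part is handled by the $L^2$ isometry of $\delta^B$ on the second chaos: the relevant kernel is a symmetrization of $\psi(y)\psi(z)\, K^{(1)}_{x,y}(u)\, K^{(1)}_{x,z}(v)$, whose second moment reduces to a Fourier integral which, using again Lemma \ref{lem:estim-k-hat} with exponents $a,b$ close to $1$ and the condition $\al < -4/3$, is finite and scales as $2^{-(2\al+2)\ell\cdot 2}$ in $\ell$. The first Chen relation in (\ref{k-chen-relation}) for $\widehat{\mathbf{B}}^{\mathbf{2},n}$ is a direct consequence of the definition of Skorohod integrals and the identity $K^{(1)}_{x,y}-K^{(1)}_{x,z}= K^{(1)}_{z,y}$ built into the kernel, and passes to the limit thanks to the uniform bounds. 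All the details are precisely those of \cite[Propositions 3.6--3.12]{deya} and need no modification, since the constraints $\al>-3/2$ and $2\al+2>-1$ are compatible with $2H_1+H_2>3/2$.

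The main potential difficulty --- and in our setting essentially the only one --- is the quantitative control of the remaining trace $L_x(\psi)$ in the Besov norm of order $2\al+2$: the divergence of $c^n_{H_1,H_2}$ recorded in (\ref{estim-cstt}) must be exactly cancelled by the trace of the kernel of $\widehat{\mathbf{B}}^{\mathbf{2},n}_x$, and then what survives must have the right scaling in $\ell$. This is precisely where the choice of $c^n_{H_1,H_2}$ in (\ref{defi-seq-reno}) enters, and the verification is the one carried out in \cite[Section 3.3]{deya}. Combined with the first-order estimate, this yields the pair $(\widehat{\mathbf{B}}^{\mathbf{1}},\widehat{\mathbf{B}}^{\mathbf{2}})$ and the convergence rates claimed in (\ref{estim-mom-x-1})--(\ref{estim-mom-x-2}).
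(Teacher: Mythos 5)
Your sketch is essentially the paper's route: the paper does not reprove this lemma but imports it verbatim from \cite{deya}, adding only the remark that although \cite{deya} states the second-order bounds for $2H_1+H_2>\frac53$, the same computations --- i.e.\ the chaos-decomposition + multiparameter G-R-R + hypercontractivity scheme you outline, which is exactly the one redeployed at third order in Sections \ref{sec:mall-cal}--\ref{sec:proof-main-results} and in the proof of (\ref{rappel-estim-ordre-deux}) --- extend to $2H_1+H_2>\frac32$. The only blemishes are cosmetic: the Skorohod integral in your second-order decomposition should be $\delta^{B^n}$ rather than $\delta^{B}$ (compare (\ref{terme-simpl-deux})), and with the paper's convention the truncated operator $L^n_x$ is already the renormalized trace (the divergent part $c^n_{H_1,H_2}\int\psi$ having been removed), so the bracket $\bigl[L^n_x(\psi)-c^n_{H_1,H_2}\langle\psi,\mathbf{Id}\rangle\bigr]$ double-counts the renormalization; also, Lemma \ref{lem:estim-k-hat} requires $a+b<1$, so one cannot take both exponents \enquote{close to $1$}.
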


\begin{remark}
We are aware that in \cite{deya}, the results of (\ref{estim-mom-x-2}) are only stated under the assumption that $\frac53 < 2H_1+H_2\leq 2$ (the bounds in (\ref{estim-mom-x-1}) are actually true for every $(H_1,H_2)\in (0,1)^2$). However, a close examination of the technical details in the latter reference would show to the - patient - reader that the computations remain valid for $2H_1+H_2\in (\frac32,\frac53]$ as well. In any case, the forthcoming proof of Formula (\ref{def:hat-b-order-2}) for $\widehat{\mathbf{B}}^{\mathbf{2}}$ would easily allow us to recover (\ref{estim-mom-x-2}) (see for instance (\ref{rappel-estim-ordre-deux})), and these bounds at first and second-order orders turn out to be elementary to obtain in comparison with the subsequent third-order estimates.
\end{remark}

\section{Chaos decomposition of the fractional canonical $K$-rough path}\label{sec:mall-cal}
Before we go further, note that, for the sake of clarity in the subsequent computations, and in opposition with the formulation of Section \ref{sec:main-results}, we will henceforth go back to more standard notations regarding time, resp. space, variables, and denote them by $s,t,u$, resp. $x,y,z$.

\smallskip

Now, in the Gaussian setting under consideration, our strategy to prove the convergence of $\widehat{\mathbf{B}}^{n}$ is based on the following natural (albeit technical) two-step procedure:

\smallskip

\noindent
$(i)$ For fixed $n$, and using Malliavin-calculus tools, expand the components of $\widehat{\mathbf{B}}^{n}$ as a sum of Skorohod integrals with respect to $B^n$;

\smallskip

\noindent
$(ii)$ Show the convergence, as $n$ tends to infinity and for an appropriate topology, of each of the summands in these chaos-type decompositions.

\

The present section \ref{sec:mall-cal} is devoted to the proof of Step $(i)$, and therefore, some preliminary material on Malliavin calculus must be introduced. In fact, for the two-parameter processes we shall consider in the sequel, an exhaustive presentation of this material can be found in \cite[Sections 5 and 6]{chouk-tindel}, and accordingly we will not return to the definition of the classical objects therein introduced, namely the Hilbert space $\mathcal{H}_Z$, the Malliavin derivative $D^Z$ and the Skorohod integral $\delta^Z$ associated with any centered Gaussian field $\{Z(s,x); \ s,x\in \R\}$ defined on a complete probability space $(\Omega, \cf,\mathbb{P})$.

\smallskip

However, what must be underlined in this situation (i.e., when working with $B$ or $B^n$) is that thanks to the representation (\ref{representation-sheet}), resp. (\ref{approx-noise}), there exists a close link between the Malliavin calculus with respect to $B$, resp. $B^n$, and the Malliavin calculus with respect to $W$ or $\widehat{W}:=\mathcal{F}(W)$. To elaborate on these relations, let us introduce the family of operators $\cq_{\al_1,\al_2}$ ($\al_1,\al_2\in (0,1)$), resp. $\cq^n_{\al_1,\al_2}$, defined for every measurable, compactly-supported function $\vp$ and every $\xi,\eta \in \R$ as 
$$\cq_{\al_1,\al_2}(\vp)(\xi,\eta):=-c_{\al_1,\al_2} \frac{\xi\cdot \eta}{|\xi|^{\al_1+\frac12} |\eta|^{\al_2+\frac12}}\, \fouri{\vp}(-\xi,-\eta) \ ,$$
resp.
$$\cq^n_{\al_1,\al_2}(\vp)(\xi,\eta):=-c_{\al_1,\al_2}\1_{\{(\xi,\eta) \in \cd_n\}} \frac{\xi\cdot \eta}{|\xi|^{\al_1+\frac12} |\eta|^{\al_2+\frac12}}\, \fouri{\vp}(-\xi,-\eta) \ ,$$
where $c_{\al_1,\al_2}$ is the same constant as in the representation (\ref{representation-sheet}). We can then rely on the following identities: for every test-function $\vp$, every functional $F=F(B)$, resp. $F^n=F^n(B^n)$, smooth enough (in the sense of Malliavin calculus) and every $\ch_B$-valued, resp. $\ch_{B^n}$-valued, random variable $u$ in an appropriate domain, it holds that
\begin{equation}\label{identi-q}
\lVert \vp\rVert_{\ch_B}=\lVert \cq_{H_1,H_2}(\vp)\rVert_{L^2(\R^2)} \quad , \quad \langle u,D^BF\rangle_{\ch_B}=\langle \cq_{H_1,H_2}(u),D^{\widehat{W}}F \rangle_{L^2(\R)} \quad , \quad \delta^{B}(u)=\delta^{\widehat{W}}(\cq_{H_1,H_2}(u)) \ ,
\end{equation}
resp.
\begin{equation}\label{identi-q-n}
\lVert \vp\rVert_{\ch_{B^n}}=\lVert \cq^n_{H_1,H_2}(\vp)\rVert_{L^2(\R^2)} \quad , \quad  \langle u,D^{B^n}F^n\rangle_{\ch_{B^n}}=\langle \cq^n_{H_1,H_2}(u),D^{\widehat{W}}F^n \rangle_{L^2(\R)} \quad , \quad \delta^{B^n}(u)=\delta^{\widehat{W}}(\cq^n_{H_1,H_2}(u)) \ .
\end{equation}
Here again, we refer the reader to \cite{chouk-tindel} (and more specifically to \cite[Lemma 6.1]{chouk-tindel}) for a proof of these identities, as well as for further details regarding the specific assumptions on $F$, $F^n$ and $u$. Let us also recall the following general product rule satisfied by the Skorohod integral, for either $Z:=B$ or $Z:=B^n$ (see \cite{nualart}):
\begin{equation}\label{mall-product-rule}
\delta^Z(F\, u)=F \, \delta^Zu-\langle D^Z F,u\rangle_{\ch_Z} \ .
\end{equation}

\smallskip

Identities (\ref{identi-q}) and (\ref{identi-q-n}) point out the important role played by the operators $\cq_{\al_1,\al_2},\cq^n_{\al_1,\al_2}$ in this setting. The following related estimates will thus prove to be fundamental in the sequel:
\begin{lemma}\label{lem:sobolev}
Let $\al_1,\al_2 \in (0,1)$. For every smooth compactly-supported function $\vp:\R^2 \to \R$, it holds that
\begin{equation}\label{bou-q-n}
\lVert \cq^n_{\al_1,\al_2}(\vp)\Vert_{L^2(\R^2)} \leq d^n_{H_1,H_2}\lVert \vp \rVert_{L^1(\R^2)}
\end{equation}
with $d^n_{H_1,H_2}\to \infty$ as $n\to \infty$, and
\begin{equation}\label{bou-q}
\lVert \cq_{\al_1,\al_2}(\vp)\Vert_{L^2(\R^2)} \lesssim \lVert \vp \rVert_{\al_1,\al_2} \ ,
\end{equation}
where $\lVert \vp \rVert_{\al_1,\al_2}$ is defined along the following formulas:

\smallskip

\noindent
$\bullet$ If $\al_1,\al_2 \in (\frac12,1)$, then
\begin{equation}\label{sobol-cas-plus-grand}
\lVert \vp \rVert_{\al_1,\al_2}^2 :=
\iint_{\R^2} ds  dx \bigg( \iint_{\R^2}dt dy \, \frac{\vp(t,y)}{\lln t-s\rrn^{\frac32-\al_1} \lln y-x\rrn^{\frac32-\al_2}} \bigg)^2 \ .
\end{equation}

\smallskip

\noindent
$\bullet$ If $\al_1 \in (0,\frac12)$ and $\al_2\in (\frac12,0)$, then
\begin{equation}\label{sobolev-1}
\lVert \vp \rVert_{\al_1,\al_2}^2 :=
\int_{\R} ds \int_{\R} dx \bigg( \int_{\R} dy \, \frac{\vp(s,y)}{|x-y|^{\frac32-\al_2}} \bigg)^2+\iint_{\R^2} \frac{ds dt}{|s-t|^{2-2\al_1}}\int_{\R}dx \bigg( \int_{\R} dy \, \frac{\vp(s,y)-\vp(t,y)}{|x-y|^{\frac32 -\al_2}} \bigg)^2 \ .
\end{equation}

\smallskip

\noindent
$\bullet$ If $\al_1,\al_2 \in (0,\frac12)$, then
\begin{multline}\label{sobolev-2}
\lVert \vp \rVert_{\al_1,\al_2}^2 :=\int_{\R} ds \int_{\R} dx \, |\vp(s,x)|^2+\int_{\R} ds\int_{\R^2} \frac{dx dy}{|x-y|^{2-2\al_2}} |\vp(s,x)-\vp(s,y)|^2\\
+\int_{\R} dx\int_{\R^2} \frac{ds dt}{|s-t|^{2-2\al_1}} |\vp(s,x)-\vp(t,x)|^2+\int_{\R^2} dsdt \int_{\R^2}dxdy\, \frac{|\vp(s,x)-\vp(t,x)-\vp(s,y)+\vp(t,y)|^2}{|s-t|^{2-2\al_1} |x-y|^{2-2\al_2}} \ . 
\end{multline}
\end{lemma}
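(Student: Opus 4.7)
The plan is to exploit Plancherel's identity to rewrite both $\lVert \cq^n_{\al_1,\al_2}(\vp)\rVert_{L^2}^2$ and $\lVert \cq_{\al_1,\al_2}(\vp)\rVert_{L^2}^2$ as weighted integrals of $|\fouri{\vp}(\xi,\eta)|^2$ with weight $\frac{1}{|\xi|^{2\al_1-1}|\eta|^{2\al_2-1}}$ (restricted to $\cd_n$ in the first case), and then to dominate each such weighted Fourier integral by an explicit norm of $\vp$ in physical space.

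For (\ref{bou-q-n}) this reduction immediately gives the result: since $\al_1,\al_2\in(0,1)$ the exponents $2\al_j-1$ lie in $(-1,1)$, so the weight is integrable on the bounded box $\cd_n$; applying $|\fouri{\vp}|\leq \lVert\vp\rVert_{L^1}$ outside the integral isolates the stated constant
\[ (d^n_{H_1,H_2})^2 := c_{\al_1,\al_2}^2 \iint_{\cd_n} \frac{d\xi\, d\eta}{|\xi|^{2\al_1-1}|\eta|^{2\al_2-1}}, \]
whose polynomial divergence as $n\to\infty$ is a direct computation on $\cd_n=[-2^{2n},2^{2n}]\times[-2^n,2^n]$.

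For (\ref{bou-q}), I would split according to the signs of $2\al_j-1$. When $\al_1,\al_2\in(\tfrac12,1)$, both powers are positive and the weight is of negative-order Sobolev type; setting $\beta_j:=\al_j-\tfrac12\in(0,\tfrac12)$, the Riesz potential identity $\widehat{I_\beta f}(\xi)=|\xi|^{-\beta}\fouri{f}(\xi)$, applied in each variable in turn together with Plancherel, converts the weighted Fourier integral into the $L^2$-norm of the double convolution $\iint \vp(t,y)/(|t-s|^{1-\beta_1}|y-x|^{1-\beta_2})\,dt\,dy$, which with $1-\beta_j=\tfrac32-\al_j$ is exactly (\ref{sobol-cas-plus-grand}). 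When $\al_1\in(0,\tfrac12)$ and $\al_2\in(\tfrac12,1)$, one applies the Riesz representation only in the $\eta$-variable (producing the inner $y$-convolution with kernel $|y-x|^{-(3/2-\al_2)}$), and handles the remaining factor $|\xi|^{1-2\al_1}=|\xi|^{2\sigma_1}$ with $\sigma_1=\tfrac12-\al_1\in(0,\tfrac12)$ by the Gagliardo identity for the homogeneous Sobolev seminorm, which introduces the fractional increment kernel $|s-t|^{-(1+2\sigma_1)}=|s-t|^{-(2-2\al_1)}$; this produces the second summand of (\ref{sobolev-1}), and the first summand is a nonnegative contribution turning it into a valid upper bound. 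When $\al_1,\al_2\in(0,\tfrac12)$, the Gagliardo identity is invoked in both variables independently, which identifies the weighted Fourier integral with the ``full mixed-difference'' summand of (\ref{sobolev-2}), the three other summands being nonnegative and hence only strengthening the bound.

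The main obstacle is essentially bookkeeping rather than conceptual: one must verify that each Plancherel/Riesz/Gagliardo identity applies legitimately (which is automatic for smooth compactly supported $\vp$), and that the exponents transform correctly so that the Fourier-side weight $\frac{1}{|\xi|^{2\al_1-1}|\eta|^{2\al_2-1}}$ matches the physical-side kernels appearing in (\ref{sobol-cas-plus-grand}), (\ref{sobolev-1}) and (\ref{sobolev-2}). Once this matching is carried out case by case, the final step -- noting that $\lVert\vp\rVert_{\al_1,\al_2}$ dominates the corresponding homogeneous Sobolev seminorm -- is trivial, since every term in the definition is nonnegative.
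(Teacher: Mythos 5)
Your proof is correct and takes essentially the same route as the paper: for (\ref{bou-q-n}) the paper uses the identical direct estimate $|\fouri{\vp}|\leq \lVert \vp\rVert_{L^1}$ integrated over the bounded box $\cd_n$, and for (\ref{bou-q}) it simply invokes the Sobolev-embedding arguments of \cite[Lemma 4.4]{deya}, which are precisely the Riesz-potential and Gagliardo-seminorm Fourier characterizations of the fractional Sobolev norms that you spell out case by case (with the correct observation that the extra nonnegative summands in (\ref{sobolev-1})--(\ref{sobolev-2}) only strengthen the upper bound). The only cosmetic point is that no Plancherel is needed to pass from $\lVert \cq_{\al_1,\al_2}(\vp)\rVert_{L^2}^2$ to the weighted integral of $|\fouri{\vp}|^2$ --- that is just the definition of $\cq_{\al_1,\al_2}$.
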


\begin{proof}
The bound (\ref{bou-q-n}) is of course immediate: setting $d^n_{H_1,H_2}:=\int_{\cd_n} \frac{d\xi d\eta}{|\xi|^{2H_1-1}|\eta|^{2H_2-1}}$, one has 
$$\int_{\cd_n} \frac{d\xi d\eta}{|\xi|^{2H_1-1}|\eta|^{2H_2-1}} |\mathcal{F}(\vp)(-\xi,-\eta)|^2\leq d^n_{H_1,H_2} \lVert \mathcal{F}(\vp)\rVert^2_{L^\infty(\R^2)}  \leq d^n_{H_1,H_2}\lVert \vp\rVert^2_{L^1(\R^2)} \ .$$
As for (\ref{bou-q}), it can easily be derived from the same Sobolev-embedding arguments as in \cite[Lemma 4.4]{deya}.
\end{proof}

We are now in a position to prove the desired decomposition formulas for the components of $\widehat{\mathbf{B}}^n$ (for each fixed $n$), namely:
\begin{proposition}
For every smooth compactly-supported function $\psi:\R^2 \to \R$ and every $(s,x)\in \R^2$, one has, in $L^2(\Omega)$,
\begin{equation}\label{terme-simpl-deux}
\widehat{\mathbf{B}}^{\mathbf{2},n}_{(s,x)}(\psi)=\delta^{B^n}\big( \psi \cdot \widehat{\mathbf{B}}^{\mathbf{1},n}\big(K^{(1)}_{(s,x),.}\big) \big)+L^n_{(s,x)}(\psi)
\end{equation}
\begin{equation}\label{terme-simpl-trois-un}
\widehat{\mathbf{B}}^{\mathbf{3.1},n}_{(s,x)}(\psi)
=\delta^{B^n}\big( \psi \cdot \widehat{\mathbf{B}}^{\mathbf{2},n}_{(s,x)}\big(K^{(2)}_{(s,x),.}\big) \big)+\big[ L^{1,n,\dot{B}^n}_{(s,x)}(\psi)+L^{2,n,\dot{B}^n}_{(s,x)}(\psi)+L^{3,n,\dot{B}^n}_{(s,x)}(\psi)\big] \ ,
\end{equation}
and
\begin{equation}\label{terme-simpl-trois}
\widehat{\mathbf{B}}^{\mathbf{3.2},n}_{(s,x)}(\psi)
=\delta^{B^n}\big( \psi \cdot \big(\widehat{\mathbf{B}}^{\mathbf{1},n}_{(s,x)}\big(K^{(1)}_{(s,x),.}\big)\big)^2 \big)+2\, L^{1,n,\dot{B}^n}_{(s,x)}(\psi) \ ,
\end{equation}
where, following the notations of Section \ref{subsec:main-results}, we have set, for every $X\in \cac_c^\al(\R^2)$, 
$$L^n_{(s,x)}(\psi):=-c_{H_1,H_2}^2 \iint_{\cd_n}d\xi d\eta \, \frac{e^{\imath \xi s} e^{\imath \eta x}}{|\xi|^{2H_1-1} |\eta|^{2H_2-1}}\mathcal{F}(K)(\xi,\eta)\, \mathcal{F}(\psi)(\xi,\eta) \ ,$$
\begin{equation*}
L^{1,n,X}_{(s,x)}(\psi):= -c_{H_1,H_2}^2 \iint_{\cd_n}d\xi d\eta\, \frac{e^{\imath \xi s} e^{\imath \eta x}}{|\xi|^{2H_1-1} |\eta|^{2H_2-1}}\mathcal{F}(K)(\xi,\eta)\,\fouri{\Theta^{(1)}_{(s,x)}(\psi,X)}(\xi,\eta)  \ , 
\end{equation*}
\begin{equation*}
L^{2,n,X}_{(s,x)}(\psi):= - c_{H_1,H_2}^2 \iint_{\cd_n}d\xi d\eta\, \frac{e^{\imath \xi s} e^{\imath \eta x}}{|\xi|^{2H_1-1} |\eta|^{2H_2-1}}\mathcal{F}(D^{(0,1)}K)(\xi,\eta)\,\fouri{\Theta^{(2)}_{(s,x)}(\psi,X)}(\xi,\eta)  \ , 
\end{equation*}
and
\begin{equation*}
L^{3,n,X}_{(s,x)}(\psi):=c_{H_1,H_2}^2\iint_{\cd_n}\frac{d\xi d\eta}{|\xi|^{2H_1-1} |\eta|^{2H_2-1}}\fouri{\Theta^{(3)}_{(s,x)}(\psi,X)}(\xi,\eta)  \ .
\end{equation*}
\end{proposition}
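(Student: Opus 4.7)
The plan is to iterate the Malliavin product rule (\ref{mall-product-rule}) on the pathwise product representations given by Definition \ref{defi:canonical} (legitimate since $B^n$ is smooth), and then to compute each resulting contraction explicitly on the Fourier side via (\ref{approx-noise}). Two identities will drive all computations: the Fourier representation of the covariance
$$\mathcal{R}^n(u,y):=\esp[\dot{B}^n(u)\dot{B}^n(y)]=c_{H_1,H_2}^2\iint_{\cd_n}\frac{d\xi\,d\eta}{|\xi|^{2H_1-1}|\eta|^{2H_2-1}}e^{\imath\xi(u_0-y_0)+\imath\eta(u_1-y_1)},$$
and the translation identity $\int K(y-u)e^{\imath\xi u_0+\imath\eta u_1}du=e^{\imath\xi y_0+\imath\eta y_1}\mathcal{F}(K)(-\xi,-\eta)$. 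Setting $F(y):=\widehat{\mathbf{B}}^{\mathbf{1},n}(K^{(1)}_{(s,x),y})$, a first-chaos variable with Malliavin derivative $D^{B^n}F(y)=K^{(1)}_{(s,x),y}$, and using the smooth-field form of the product rule $\int u(y)\dot{B}^n(y)dy=\delta^{B^n}(u)+\int\langle D^{B^n}u(y),\delta_y\rangle_{\ch_{B^n}}dy$ together with $\langle\vp,\delta_y\rangle_{\ch_{B^n}}=\int\vp(z)\mathcal{R}^n(z,y)dz$, each identity will follow by computing the trace integrals explicitly.

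For (\ref{terme-simpl-deux}), applying the product rule to $\mathbf{B}^{\mathbf{2},n}_{(s,x)}(\psi)=\int\psi(y)F(y)\dot{B}^n(y)dy$ produces $\delta^{B^n}(\psi F)+\int\psi(y)\langle K^{(1)}_{(s,x),y},\delta_y\rangle_{\ch_{B^n}}dy$; the two Fourier identities split the trace into a constant piece $c^n_{H_1,H_2}\int\psi$, absorbed by the renormalization $-c^n_{H_1,H_2}\mathbf{Id}$, and precisely $L^n_{(s,x)}(\psi)$. For (\ref{terme-simpl-trois}), the same argument applied to $\int\psi(y)F(y)^2\dot{B}^n(y)dy$ together with $D^{B^n}F^2=2F\,D^{B^n}F$ yields $\delta^{B^n}(\psi F^2)+2c^n_{H_1,H_2}\int\psi F+2L^{1,n,\dot{B}^n}_{(s,x)}(\psi)$ — upon recognizing $\psi F=\Theta^{(1)}_{(s,x)}(\psi,\dot{B}^n)$ — and the renormalization $-2c^n_{H_1,H_2}\widehat{\mathbf{B}}^{\mathbf{1},n}(K^{(1)}_{(s,x),\cdot})$ absorbs the middle term.

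The main difficulty is (\ref{terme-simpl-trois-un}). Here $\mathbf{B}^{\mathbf{3.1},n}_{(s,x)}(\psi)$ is a triple pathwise integral against $\dot{B}^n(y)\dot{B}^n(z)\dot{B}^n(u)$, and its Wick expansion
$$\dot{B}^n(y)\dot{B}^n(z)\dot{B}^n(u)=(\text{Wick cube})+\mathcal{R}^n(y,z)\dot{B}^n(u)+\mathcal{R}^n(y,u)\dot{B}^n(z)+\mathcal{R}^n(z,u)\dot{B}^n(y)$$
produces a third-chaos part plus three distinct contractions. The Wick cube, after iterating the product rule and invoking the already-established (\ref{terme-simpl-deux}), reorganizes into $\delta^{B^n}(\psi\cdot\widehat{\mathbf{B}}^{\mathbf{2},n}_{(s,x)}(K^{(2)}_{(s,x),\cdot}))$. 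The delicate step is to match each of the three contraction terms, after inserting the Taylor decomposition (\ref{defi-k-deux}) of $K^{(2)}$, with the corresponding $L^{i,n,\dot{B}^n}$: the $(z,u)$-contraction yields the non-local $\Theta^{(3)}$-structure and hence $L^{3,n,\dot{B}^n}$ (with no residual $\mathcal{F}(K)$ factor since the Fourier weight is fully absorbed by the product $K^{(2)}\cdot K^{(1)}$ under the translation identity), while the two $y$-involving contractions split according to whether they act on the principal piece $K(y-\cdot)-K((s,x)-\cdot)$ of $K^{(2)}$ (producing $L^{1,n,\dot{B}^n}$ with its $\mathcal{F}(K)$ weight) or on the Taylor-remainder piece $(y_1-x_1)(D^{(0,1)}K)((s,x)-\cdot)$ (producing $L^{2,n,\dot{B}^n}$ with its $\mathcal{F}(D^{(0,1)}K)$ weight). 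The same computation isolates a single diverging residual $c^n_{H_1,H_2}\widehat{\mathbf{B}}^{\mathbf{1},n}(K^{(1)}_{(s,x),\cdot})(\psi)$, produced by pairing the constant part of the trace (equal to $c^n_{H_1,H_2}$) with a surviving $K^{(1)}$-factor; this is exactly what Lemma \ref{lem:transfo-m} subtracts through the renormalization, closing the identity.
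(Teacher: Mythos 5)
Your treatment of (\ref{terme-simpl-deux}) and (\ref{terme-simpl-trois}) follows the same route as the paper: apply the Skorohod product rule (\ref{mall-product-rule}), then compute the resulting trace on the Fourier side, splitting it into the constant $c^n_{H_1,H_2}\int\psi$ (resp. $2c^n_{H_1,H_2}\int\psi F$), which the renormalization of Lemma \ref{lem:transfo-m} removes, plus exactly $L^n_{(s,x)}(\psi)$ (resp. $2L^{1,n,\dot{B}^n}_{(s,x)}(\psi)$). That part is correct. The one thing you elide is the justification of the continuum product rule for the anticipating integrand: the paper discretizes ($V^n(t_i,y_j)\1_{[t_i,t_{i+1})\times[y_j,y_{j+1})}$), applies (\ref{mall-product-rule}) cell by cell and passes to the limit in $L^2(\Omega)$ using the deterministic bounds of Appendix \ref{append} together with (\ref{estim-mom-x-1})--(\ref{estim-mom-x-2}); for fixed $n$ this is routine but it is where the actual work sits.

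The bookkeeping you propose for (\ref{terme-simpl-trois-un}) does not close. Keep your labels: $y$ outer (against $\psi$), $z$ middle (against $K^{(2)}_{(s,x),y}$), $u$ inner (against $K^{(1)}_{(s,x),z}$). First, the Wick cube alone is \emph{not} $\delta^{B^n}\big(\psi\cdot\widehat{\mathbf{B}}^{\mathbf{2},n}_{(s,x)}(K^{(2)}_{(s,x),\cdot})\big)$: the integrand $\widehat{\mathbf{B}}^{\mathbf{2},n}_{(s,x)}(K^{(2)}_{(s,x),y})=\mathbf{B}^{\mathbf{2},n}_{(s,x)}(K^{(2)}_{(s,x),y})$ is not Wick-ordered (it has the nonzero mean $\int K^{(2)}_{(s,x),y}(z)\,\esp[\dot{B}^n(K^{(1)}_{(s,x),z})\dot{B}^n(z)]\,dz$), so its Skorohod integral carries a first-chaos component, and that component is precisely your $(z,u)$-contraction. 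The $(z,u)$-contraction is therefore absorbed into the Skorohod term and cannot also account for $L^{3,n,\dot{B}^n}$. Second, unravelling the definition of $\Theta^{(3)}$ gives
$$L^{3,n,\dot{B}^n}_{(s,x)}(\psi)=\iint dy\,\psi(y)\iint dz\,\mathcal{R}^n(y,z)\,K^{(2)}_{(s,x),y}(z)\,\dot{B}^n\big(K^{(1)}_{y,z}\big)\ ,$$
whose surviving noise is the \emph{inner} one, smeared against $K^{(1)}_{y,z}$: it comes from the $(y,z)$-contraction via the recentering $\dot{B}^n(K^{(1)}_{(s,x),z})=\dot{B}^n(K^{(1)}_{(s,x),y})+\dot{B}^n(K^{(1)}_{y,z})$. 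In fact the $(y,z)$-contraction by itself already produces \emph{all} of $c^n_{H_1,H_2}\,\dot{B}^n(K^{(1)}_{(s,x),\cdot})(\psi)+L^{1,n,\dot{B}^n}+L^{2,n,\dot{B}^n}+L^{3,n,\dot{B}^n}$ once one inserts the explicit formula for $\iint dudz\,e^{-\imath\xi(t-u)}e^{-\imath\eta(y-z)}K^{(2)}_{(s,x),(t,y)}(u,z)$ — this is exactly how the paper's proof proceeds. What then remains is your $(y,u)$-contraction, whose surviving noise is the bare middle factor $\dot{B}^n(z)$ weighted by $\int K^{(1)}_{(s,x),z}(u)\mathcal{R}^n(y,u)\,du$; this structure is matched by none of the $L^{i}$'s, and your argument gives no reason for it to vanish or be absorbed. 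Your accounting appears to balance only because two misassignments compensate at the level of labels; identifying what happens to the $(y,u)$-contraction is the genuinely delicate point of the identity, and it is the gap you would have to fill.
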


\begin{proof}
Let us first focus on the most intricate identity, namely (\ref{terme-simpl-trois-un}). To this end, we set
$$V^n_{(s,x)}(t,y):=\psi(t,y) \, \widehat{\mathbf{B}}^{\mathbf{2},n}_{(s,x)}\big(K^{(2)}_{(s,x),(t,y)}\big)$$
and consider two sequences of partitions $t_i=t_i^\ell:=\frac{i}{\ell}$, $y_j=y_j^m:=\frac{j}{m}$ ($\ell,m\in  \mathbb{N}$, $i,j\in \mathbb{Z}$). For all fixed $i,j,\ell,m$, we know by (\ref{mall-product-rule}) that, writing $\1_{\square_{ij}}$ for $\1_{[t_i,t_{i+1})\times [y_j,y_{j+1})}$, one has
\begin{equation}\label{appli-product-rule}
\delta^{B^n}\big( V^n_{(s,x)}(t_i,y_j) \, \1_{\square_{ij}} \big)=V^n_{(s,x)}(t_i,y_j) \,\delta^{B^n}\big( \1_{\square_{ij}} \big)-\langle D^{\widehat{W}}\big( V^n_{(s,x)}(t_i,y_j)\big),\cq^n_{H_1,H_2}\big(\1_{\square_{ij}}\big) \rangle_{L^2(\R^2)} \ .
\end{equation}
Now, since
\begin{align*}
&D^{\widehat{W}}\big( V^n_{(s,x)}(t_i,y_j)\big)(\xi,\eta)\\
&=-c_{H_1,H_2} \psi(t_i,y_j) \frac{\xi \cdot \eta}{|\xi|^{H_1+\frac12}|\eta|^{H_2+\frac12}} \iint_{\R^2} du dz \, e^{\imath u\xi} e^{\imath z \eta}\dot{B}^n\big(K^{(1)}_{(s,x),(u,z)} \big)\, K^{(2)}_{(s,x),(t_i,y_j)}(u,z) \ ,
\end{align*}
we can assert that
\begin{align*}
&\sum_{i,j\in \Z}\langle D^{\widehat{W}}\big( V^n_{(s,x)}(t_i,y_j)\big),\cq^n_{H_1,H_2}\big(\1_{\square_{ij}}\big) \rangle_{L^2(\R^2)}=c_{H_1,H_2}^2\iint_{\cd_n}\frac{d\xi d\eta}{|\xi|^{2H_1-1}|\eta|^{2H_2-1}}\\
&\hspace{2cm}\frac{1}{\ell\cdot m} \sum_{i,j\in \Z}\psi(t_i,y_j) \Big[1+\int_0^1 d\la \, \{e^{-\imath \la (t_{i+1}-t_i)\xi}-1\} \Big] \Big[1+\int_0^1 d\la \, \{e^{-\imath \la (y_{j+1}-y_j)\eta}-1\} \Big]\\
&\hspace{4cm}\iint_{\R^2} dudz \, e^{-\imath (t_i-u)\xi}e^{-\imath (y_j-z)\eta} \dot{B}^n\big(K^{(1)}_{(s,x),(u,z)} \big)\, K^{(2)}_{(s,x),(t_i,y_j)}(u,z) \ .
\end{align*}
From this expression, and with the help of (\ref{estim-x-1-1}) and (\ref{estim-mom-x-1}), we can easily justify that as $\ell,m\to\infty$,
\begin{align*}
&\sum_{i,j} \langle D^{\widehat{W}}\big( V^n_{(s,x)}(t_i,y_j)\big),\cq^n_{H_1,H_2}\big(\1_{\square_{ij}}\big) \rangle_{L^2(\R^2)}\\
& \stackrel{L^2(\Omega)}{\longrightarrow} c_{H_1,H_2}^2 \iint_{\cd_n} \frac{d\xi \, d\eta}{|\xi|^{2H_1-1}|\eta|^{2H_2-1}}\\
&\hspace{2cm} \iint_{\R^2} dt dy \, \psi(t,y) \iint_{\R^2} du dz \, e^{-\imath \xi (t-u)} e^{-\imath \eta (y-z)} \dot{B}^n\big(K^{(1)}_{(s,x),(u,z)} \big)\, K^{(2)}_{(s,x),(t,y)}(u,z) \\
&\hspace{1cm}= c_{H_1,H_2}^2\iint_{\cd_n} \frac{d\xi \, d\eta}{|\xi|^{2H_1-1}|\eta|^{2H_2-1}} \iint_{\R^2} dt dy \, \psi(t,y) \dot{B}^n\big(K^{(1)}_{(s,x),(t,y)}\big)\\
& \hspace{3cm}\iint_{\R^2} du dz \, e^{-\imath \xi (t-u)} e^{-\imath \eta (y-z)} K^{(2)}_{(s,x),(t,y)}(u,z)+L^{3,n,\dot{B}^n}_{(s,x)}(\psi) \ .
\end{align*}
At this point, observe that
\begin{align*}
&\iint_{\R^2} du dz \, e^{-\imath \xi (t-u)} e^{-\imath \eta (y-z)} K^{(2)}_{(s,x),(t,y)}(u,z)\\
&=\fouri{K}(\xi,\eta)-e^{-\imath \xi(t-s)} e^{-\imath \eta (y-x)}\big\{ \fouri{K}(\xi,\eta)+(y-x)\cdot\fouri{(D^{(0,1)}K)}(\xi,\eta)\big\}\ ,
\end{align*}
which immediately entails that
\begin{align*}
&c_{H_1,H_2}^2\iint_{\cd_n} \frac{d\xi \, d\eta}{|\xi|^{2H_1-1}|\eta|^{2H_2-1}} \iint_{\R^2} dt dy \, \psi(t,y) \dot{B}^n\big(K^{(1)}_{(s,x),(t,y)}\big)\iint_{\R^2} du dz \, e^{-\imath \xi (t-u)} e^{-\imath \eta (y-z)} K^{(2)}_{(s,x),(t,y)}\\
&\hspace{2cm}=c^n_{H_1,H_2} \dot{B}^n\big(K^{(1)}_{(s,x),.}\big)(\psi)+L^{1,n,\dot{B}^n}_{(s,x)}(\psi)+L^{2,n,\dot{B}^n}_{(s,x)}(\psi) \ .
\end{align*}

\

\noindent
Going back to (\ref{appli-product-rule}) and observing in addition that $\widehat{\mathbf{B}}^{\mathbf{2},n}_{(s,x)}\big(K^{(2)}_{(s,x),(t,y)}\big)=\mathbf{B}^{\mathbf{2},n}_{(s,x)}\big(K^{(2)}_{(s,x),(t,y)}\big)$, it remains us to prove that as $\ell,m\to \infty$, one has
\begin{equation}\label{reste-proof}
\delta^{B^n}\Big( \sum_{i,j\in \Z} V^n_{(s,x)}(t_i,y_j) \, \1_{\square_{ij}}\Big) \stackrel{L^2(\Omega)}{\longrightarrow} \delta^{B^n}(V^n_{(s,x)}) \quad \text{and} \quad \sum_{i,j\in \Z} V^n_{(s,x)}(t_i,y_j) \, \delta^{B^n}(\1_{\square_{ij}}) \stackrel{L^2(\Omega)}{\longrightarrow}\mathbf{B}^{\mathbf{3.1},n}_{(s,x)}(\psi) \ . 
\end{equation}
Using the basic properties of the Skorohod integral, the proof of the first convergence actually reduces to showing that
$$\sum_{i,j\in \Z} V^n_{(s,x)}(t_i,y_j) \, \1_{\square_{ij}}\to V^n_{(s,x)} \quad \text{in} \ L^2(\Omega;\ch_{B^n}) \ .$$
To this end, we can first invoke (\ref{identi-q-n}) and (\ref{bou-q-n}) to assert that
\begin{eqnarray*}
\lefteqn{\mathbb{E}\Big[ \big\lVert \sum_{i,j\in \Z} V^n_{(s,x)}(t_i,y_j) \, \1_{\square_{ij}}-V^n_{(s,x)}\big\rVert_{\ch_{B^n}}^2 \Big]}\\
&\leq &(d^n_{H_1,H_2})^2\, \mathbb{E}\Big[ \big\lVert \sum_{i,j\in \Z} V^n_{(s,x)}(t_i,y_j) \, \1_{\square_{ij}}-V^n_{(s,x)}\big\rVert_{L^1(\R^2)}^2 \Big]\\
&\leq& (d^n_{H_1,H_2})^2\, \mathbb{E}\Big[ \Big( \sum_{i,j\in \Z} \int_{t_i}^{t_{i+1}}dt\int_{y_j}^{y_{j+1}}dy \, |V^n_{(s,x)}(t,y)-V^n_{(s,x)}(t_i,y_j)|\Big)^2\Big]\\
&\leq& (d^n_{H_1,H_2})^2\, \Big( \sum_{i,j\in \Z} \int_{t_i}^{t_{i+1}}dt\int_{y_j}^{y_{j+1}}dy \, \mathbb{E}\big[ |V^n_{(s,x)}(t,y)-V^n_{(s,x)}(t_i,y_j)|^2\big]^{1/2}\Big)^2 \ .
\end{eqnarray*}
Going back to the definition of $V^n_{(s,x)}$, the conclusion now follows from the combination of (\ref{estim-x-2-1})-(\ref{estim-x-2-2-time})-(\ref{estim-x-2-2}) and (\ref{estim-mom-x-2}).

\smallskip

\noindent
As for the second convergence statement in (\ref{reste-proof}), observe that
\begin{eqnarray*}
\lefteqn{\mathbb{E}\Big[ \big| \sum_{i,j\in \Z} V^n_{(s,x)}(t_i,y_j) \, \delta^{B^n}(\1_{\square_{ij}})-\mathbf{B}^{\mathbf{3.1},n}_{(s,x)}(\psi)\big|^2\Big]}\\
&=&\mathbb{E}\Big[ \big| \sum_{i,j\in \Z} \int_{t_i}^{t_{i+1}} dt \int_{y_j}^{y_{j+1}}dy \, \big\{V^n_{(s,x)}(t,y)-V^n_{(s,x)}(t_i,y_j)\big\} \, \dot{B}^n(t,y) \big|^2\Big]\\
&\leq & \Big( \sum_{i,j\in \Z} \int_{t_i}^{t_{i+1}} dt \int_{y_j}^{y_{j+1}}dy \, \mathbb{E}\big[ |V^n_{(s,x)}(t,y)-V^n_{(s,x)}(t_i,y_j)|^4\big]^{1/4}\mathbb{E}\big[ |\dot{B}^n(t,y)|^4\big]^{1/4}\Big)^2 \ .
\end{eqnarray*}
Just as above, we can now conclude by using (\ref{estim-x-2-1})-(\ref{estim-x-2-2-time})-(\ref{estim-x-2-2}) and (\ref{estim-mom-x-2}), together with the fact that
$$ \mathbb{E}\big[ |\dot{B}^n(t,y)|^4\big]^{1/4} \lesssim \mathbb{E}\big[ |\dot{B}^n(t,y)|^2\big]^{1/2} \lesssim \iint_{\cd_n} \frac{d\xi d\eta}{|\xi|^{2H_1-1}|\eta|^{2H_2-1}} \ .$$
This achieves the proof of (\ref{terme-simpl-trois-un}).

\smallskip

The - less sophisticated - identities (\ref{terme-simpl-deux}) and (\ref{terme-simpl-trois}) can then be shown along the very same arguments, and therefore we leave their proofs to the reader as an exercise.
\end{proof}

\section{Convergence of the decomposition}\label{sec:proof-main-results}
We turn here to the second - and final - step of the strategy sketched out at the beginning of the previous section. Thus, in brief, our aim now is to prove the convergence of $\widehat{\mathbf{B}}^n$ by showing the convergence of each of the summands in the decompositions (\ref{terme-simpl-deux})-(\ref{terme-simpl-trois}). Identities (\ref{def:hat-b-order-2})-(\ref{def:hat-b-order-3-1}) will then be obtained as immediate consequences of this extension.

\smallskip

Throughout the section, and just as in Lemma \ref{lem:results-orders-1-2}, we will use the notation $R_k:=[-k,k]^2$, for $k\geq 0$. Besides, we recall that given $\psi:\R^2 \to \R$, $(s,x)\in \R^2$ and $\ell \geq 0$, we denote by $\psi^\ell_{(s,x)}$ the rescaled function
$$\psi_{(s,x)}^\ell(t,y):=2^{3\ell} \psi(2^{2\ell}(t-s),2^\ell(y-x)) \ , \ \text{for all} \ (t,y)\in \R^2 \ .$$
For a clear statement of our result, let us introduce the processes $\widetilde{\mathbf{B}}^{\mathbf{2}}$, $\widetilde{\mathbf{B}}^{\mathbf{3.1}}$ and $\widetilde{\mathbf{B}}^{\mathbf{3.2}}$ defined by the right-hand sides of (\ref{def:hat-b-order-2}), (\ref{def:hat-b-order-3}) and (\ref{def:hat-b-order-3-1}), that is
$$
\widetilde{\mathbf{B}}^{\mathbf{2}}_{(s,x)}(\psi)
:=\delta^{B}\Big( \psi \cdot \widehat{\mathbf{B}}^{\mathbf{1}}\big(K^{(1)}_{(s,x),.}\big) \Big)+L_{(s,x)}(\psi) \ ,
$$
$$\widetilde{\mathbf{B}}^{\mathbf{3.1}}_{(s,x)}(\psi):=\delta^{B}\Big( \psi\cdot \widehat{\mathbf{B}}^{\mathbf{2}}_{(s,x)}\big(K^{(2)}_{(s,x),.}\big) \Big)+\big[ L^{1,\dot{B}}_{(s,x)}(\psi)+L^{2,\dot{B}}_{(s,x)}(\psi)+L^{3,\dot{B}}_{(s,x)}(\psi)\big]$$
and
$$\widetilde{\mathbf{B}}^{\mathbf{3.2}}_{(s,x)}(\psi):=\delta^{B}\Big( \psi \cdot \big(\widehat{\mathbf{B}}^{\mathbf{1}}\big(K^{(1)}_{(s,x),.}\big)\big)^2 \Big)+2\, L^{1,\dot{B}}_{(s,x)}(\psi) \ ,$$
where the processes $\widehat{\mathbf{B}}^{\mathbf{1}},\widehat{\mathbf{B}}^{\mathbf{2}}$ have been introduced through the preliminary Lemma \ref{lem:results-orders-1-2}.

\smallskip

The result of the main technical step of our analysis now reads as follows (we recall that the set $\cb$ of test-functions has been introduced at the beginning of Section \ref{sec:main-results}):
\begin{proposition}\label{main-bound}
For every $\al \in (-\frac32,-3+2H_1+H_2)$, there exists $\ep>0$ such that for all $n,\ell\geq 0$, $k\geq 1$, $(s,x)\in R_k$ and $\psi\in \cb$, one has
\begin{equation}\label{rappel-estim-ordre-deux}
\mathbb{E}\Big[\Big|\left\langle \widehat{\mathbf{B}}^{\mathbf{2},n}_{(s,x)}-\widetilde{\mathbf{B}}^{\mathbf{2}}_{(s,x)},\psi^\ell_{(s,x)} \right\rangle\Big|^2 \Big] \lesssim  k^2 2^{-n\ep} 2^{-2\ell (2\al+2)} \ ,
\end{equation}
\begin{equation}\label{mai-estim-ordre-trois-un}
\mathbb{E}\Big[\Big|\left\langle \widehat{\mathbf{B}}^{\mathbf{3.1},n}_{(s,x)}-\widetilde{\mathbf{B}}^{\mathbf{3.1}}_{(s,x)},\psi^\ell_{(s,x)} \right\rangle\Big|^2 \Big] \lesssim k^2 2^{-n\ep} 2^{-2\ell (3\al+4)} \ ,
\end{equation}
and
\begin{equation}\label{mai-estim-ordre-trois-deux}
\mathbb{E}\Big[\Big|\left\langle \widehat{\mathbf{B}}^{\mathbf{3.2},n}_{(s,x)}-\widetilde{\mathbf{B}}^{\mathbf{3.2}}_{(s,x)},\psi^\ell_{(s,x)} \right\rangle\Big|^2 \Big] \lesssim k^2 2^{-n\ep} 2^{-2\ell (3\al+4)} \ ,
\end{equation}
where the proportional constants are independent of $(n,k,\ell)$ and $\psi$.
\end{proposition}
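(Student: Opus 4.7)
The plan is to exploit the chaos-type decompositions proved in the previous section, namely identities (\ref{terme-simpl-deux})--(\ref{terme-simpl-trois}) and their natural counterparts defining $\widetilde{\mathbf{B}}^{\mathbf{2}},\widetilde{\mathbf{B}}^{\mathbf{3.1}},\widetilde{\mathbf{B}}^{\mathbf{3.2}}$. Subtracting these expressions term by term splits each quantity $\langle \widehat{\mathbf{B}}^{\mathbf{i},n}_{(s,x)}-\widetilde{\mathbf{B}}^{\mathbf{i}}_{(s,x)},\psi^\ell_{(s,x)}\rangle$ into a \emph{top-chaos piece}, which is a difference of Skorohod integrals $\delta^{B^n}(u_n)-\delta^B(u)$, plus a family of \emph{trace pieces} of the form $L^n-L$ and $L^{j,n,\dot{B}^n}-L^{j,\dot{B}}$. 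Each of these pieces will be bounded separately, the target being a geometric decay of type $2^{-n\ep}$ in $n$ and of type $2^{-\ell\kappa}$ in the scaling parameter $\ell$, with $\kappa$ matching the prescribed Besov-type exponent ($2\al+2$ at second order and $3\al+4$ at third order).

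For the Skorohod pieces, the idea is to pass to the white noise $\widehat{W}$ via the isometries $\delta^{B^n}=\delta^{\widehat W}\circ\cq^n_{H_1,H_2}$ and $\delta^{B}=\delta^{\widehat W}\circ\cq_{H_1,H_2}$ recalled in (\ref{identi-q})--(\ref{identi-q-n}), then apply the classical $L^2$-isometry for $\delta^{\widehat W}$ on second-chaos integrands. This reduces the task to controlling, in $L^2(\Omega; L^2(\R^2))$, quantities like
$$\cq_{H_1,H_2}\!\Big(\psi^\ell_{(s,x)}\cdot \widehat{\mathbf{B}}^{\mathbf{2}}_{(s,x)}(K^{(2)}_{(s,x),\cdot})\Big)-\cq^n_{H_1,H_2}\!\Big(\psi^\ell_{(s,x)}\cdot \widehat{\mathbf{B}}^{\mathbf{2},n}_{(s,x)}(K^{(2)}_{(s,x),\cdot})\Big),$$
together with the corresponding contractions coming from their Malliavin derivative. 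The Sobolev bounds (\ref{sobol-cas-plus-grand})--(\ref{sobolev-2}) of Lemma \ref{lem:sobolev}, the Fourier-kernel estimate of Lemma \ref{lem:estim-k-hat}, the deterministic interaction estimates of Appendix \ref{append}, and the moment bounds (\ref{estim-mom-x-1})--(\ref{estim-mom-x-2}) combine to give the correct scaling in $\ell$. The factor $2^{-n\ep}$ arises from the fact that any appearance of $\cq^n_{H_1,H_2}-\cq_{H_1,H_2}$ is supported in $\cd_n^c$, so the integrand carries a negative power of $|\xi|$ or $|\eta|$ on a domain where $|\xi|\geq 2^{2n}$ or $|\eta|\geq 2^n$; the excess regularity $-3+2H_1+H_2-\al>0$ can then be converted into the claimed exponential decay.

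For the trace pieces, one argues directly in Fourier: for instance
\begin{equation*}
(L^n_{(s,x)}-L_{(s,x)})(\psi^\ell_{(s,x)})=-c_{H_1,H_2}^2\iint_{\cd_n^c}\frac{e^{\imath s\xi}e^{\imath x\eta}}{|\xi|^{2H_1-1}|\eta|^{2H_2-1}}\,\mathcal{F}(K)(\xi,\eta)\,\mathcal{F}(\psi^\ell_{(s,x)})(\xi,\eta)\,d\xi\,d\eta,
\end{equation*}
and one combines the bound of Lemma \ref{lem:estim-k-hat} on $\mathcal{F}(K)$ with the rescaled estimate $|\mathcal{F}(\psi^\ell_{(s,x)})(\xi,\eta)|\lesssim (1+2^{-2\ell}|\xi|+2^{-\ell}|\eta|)^{-N}$ of the smoothed test-function; integration on the exterior $\cd_n^c$ produces the product $2^{-n\ep}2^{-\ell\kappa}$. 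The same technique applies to $L^{j,n,\dot{B}^n}-L^{j,\dot{B}}$, once one further splits the increment into a frequency-cut contribution (the difference $\int_{\cd_n^c}$) and a noise-approximation contribution (the replacement of $\dot{B}^n$ by $\dot{B}$ inside $\Theta^{(j)}_{(s,x)}$), the latter of which is handled by the moment bounds of Lemma \ref{lem:results-orders-1-2}. The spatial integral in the definition of $\Theta^{(3)}$ is the most delicate and will use the pointwise control of $K^{(2)}_{(s,x),(t,y)+z}(z)$ from Appendix \ref{append}.

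The main obstacle is (\ref{mai-estim-ordre-trois-un}): the Skorohod integrand $\psi\cdot\widehat{\mathbf{B}}^{\mathbf{2},n}_{(s,x)}(K^{(2)}_{(s,x),\cdot})$ is itself a random field sitting in the second chaos, so applying the product rule (\ref{mall-product-rule}) one more time generates extra contractions whose leading contribution is precisely the renormalization term hidden in $L^{1,n,\dot{B}^n}$. The bookkeeping between these contractions and the formal trace terms $L^{1,n,\dot{B}^n},L^{2,n,\dot{B}^n},L^{3,n,\dot{B}^n}$ is what dictates both the precise definition of $\text{Renorm}(\mathbf{B}^n,c^n_{H_1,H_2})$ and the threshold $\al<-3+2H_1+H_2$; once this matching is performed, every remaining Fourier integral is absolutely convergent and the inequalities in the statement follow by summing geometric series in $(\ell,n)$, with the exponent $\ep$ chosen as any number strictly smaller than $-3+2H_1+H_2-\al$.
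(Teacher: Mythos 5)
Your proposal follows essentially the same route as the paper: the statement is proved by feeding the chaos decompositions (\ref{terme-simpl-deux})--(\ref{terme-simpl-trois}) term by term into (a) the $\cq^n_{H_1,H_2}/\cq_{H_1,H_2}$ isometries plus the Sobolev bounds of Lemma \ref{lem:sobolev} and the Appendix estimates for the top-chaos Skorohod pieces, and (b) Fourier/Cauchy--Schwarz arguments with Lemma \ref{lem:estim-k-hat} for the trace pieces, the $2^{-n\ep}$ decay coming in both cases from the restriction to $\R^2\backslash\cd_n$ and the margin $-3+2H_1+H_2-\al>0$. The only caveat is that your sketch is thinnest exactly where the paper works hardest, namely the term $L^{3,n,\dot B^n}-L^{3,\dot B}$, which requires a dyadic decomposition of $K^{(2)}$ and separate treatment of the scales $m<\ell$ and $m\geq\ell$ rather than a direct decay bound on a Fourier transform (the integrand $\Theta^{(3)}$ is only H\"older, not smooth); you correctly flag this as the delicate point but do not resolve it.
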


\begin{proof}
Starting from the decompositions (\ref{terme-simpl-deux})-(\ref{terme-simpl-trois}), these bounds are straightforward consequences of the estimates of the next subsections. To be more specific, (\ref{rappel-estim-ordre-deux}) follows from the combination of (\ref{second-chaos}) and (\ref{estim-l}), (\ref{mai-estim-ordre-trois-deux}) from the combination of (\ref{third-chaos-2}) and (\ref{estim-l-x-1-appli}), while (\ref{mai-estim-ordre-trois-un}) follows from (\ref{third-chaos-1}), (\ref{estim-l-x-1-appli}), (\ref{estim-l-x-2-appli}) and (\ref{estim-l-x-3-appli}).
\end{proof}

Before we turn to the proof of the technical estimates behind Proposition \ref{main-bound}, let us see how the latter result can be used in order to derive our main Theorem \ref{theo:main}.

\begin{proof}[Proof of Theorem \ref{theo:main}]
Observe first that, due to the standard hypercontractivity properties of Gaussian chaoses, the bounds (\ref{rappel-estim-ordre-deux})-(\ref{mai-estim-ordre-trois-deux}) can be readily turned into $L^{2p}(\Omega)$-estimates, for every $p\geq 1$. In other words, for every $\al\in (-\frac32,-3+2H_1+H_2)$ and every $p\geq 1$, there exists a constant $\ep>0$ such that for
all $n,\ell\geq 0$, $k\geq 1$ and $(s,x)\in R_k$, one has
\begin{equation}\label{rappel-estim-ordre-deux-p}
\mathbb{E}\Big[\Big|\left\langle \widehat{\mathbf{B}}^{\mathbf{2},n}_{(s,x)}-\widetilde{\mathbf{B}}^{\mathbf{2}}_{(s,x)},\psi^\ell_{(s,x)} \right\rangle\Big|^{2p} \Big] \lesssim k^{2p} 2^{-n\ep} 2^{-2\ell p(2\al+2)} \ ,
\end{equation}
\begin{equation}\label{mai-estim-ordre-trois-un-p}
\mathbb{E}\Big[\Big|\left\langle \widehat{\mathbf{B}}^{\mathbf{3.1},n}_{(s,x)}-\widetilde{\mathbf{B}}^{\mathbf{3.1}}_{(s,x)},\psi^\ell_{(s,x)} \right\rangle\Big|^{2p} \Big] \lesssim k^{2p} 2^{-n\ep} 2^{-2\ell p (3\al+4)} \ ,
\end{equation}
and
\begin{equation}\label{mai-estim-ordre-trois-deux-p}
\mathbb{E}\Big[\Big|\left\langle \widehat{\mathbf{B}}^{\mathbf{3.2},n}_{(s,x)}-\widetilde{\mathbf{B}}^{\mathbf{3.2}}_{(s,x)},\psi^\ell_{(s,x)} \right\rangle\Big|^{2p} \Big] \lesssim k^{2p} 2^{-n\ep} 2^{-2\ell p (3\al+4)} \ ,
\end{equation}
where the proportional constants are independent of $(n,k,\ell)$ (but depend on $p$).

\smallskip

\noindent
Now, fix $\al\in (-\frac32,-3+2H_1+H_2)$ and let $m\geq n$ be two positive integers. By applying Lemma \ref{GRR-gene} and using the $K$-chen relations satisfied by $\widehat{\mathbf{B}}^n,\widehat{\mathbf{B}}^m$, we easily obtain that for $\mathbf{o}\in \{\mathbf{3.1},\mathbf{3.2}\}$,
\begin{equation}\label{appli-grr-ordre-trois}
\lVert \widehat{\mathbf{B}}^{\mathbf{o},n}- \widehat{\mathbf{B}}^{\mathbf{o},m}\rVert_{3\al+4;R_k}\lesssim \sup_{\psi\in \cb_0} \sup_{\ell\geq 0} \sup_{x\in \Lambda_\scal^\ell \cap R_{k+1}} 2^{\ell(3\al+4)} |\langle \widehat{\mathbf{B}}^{\mathbf{o},n}_x- \widehat{\mathbf{B}}^{\mathbf{o},m}_x, \psi_x^\ell \rangle |+M_{n,m,k}
\end{equation}
with
\begin{align*}
&M_{n,m,k}:=\lVert \widehat{\mathbf{B}}^{\mathbf{1},n}- \widehat{\mathbf{B}}^{\mathbf{1},m}\rVert_{\al;R_{k+1}}\, \{1+\lVert \widehat{\mathbf{B}}^{\mathbf{1},n}\rVert_{\al;R_{k+1}}^2+\lVert \widehat{\mathbf{B}}^{\mathbf{1},m}\rVert_{\al;R_{k+1}}^2+\lVert \widehat{\mathbf{B}}^{\mathbf{2},n}\rVert_{2\al+2;R_{k+1}}+\lVert \widehat{\mathbf{B}}^{\mathbf{2},m}\rVert_{2\al+2;R_{k+1}}\}\\
&\hspace{4cm}+\{\lVert \widehat{\mathbf{B}}^{\mathbf{1},n}\rVert_{\al;R_{k+1}}+\lVert \widehat{\mathbf{B}}^{\mathbf{1},m}\rVert_{\al;R_{k+1}} \}\, \lVert \widehat{\mathbf{B}}^{\mathbf{2},n}-\widehat{\mathbf{B}}^{\mathbf{2},m}\rVert_{2\al+2;R_{k+1}} \ .
\end{align*}
At this point, let us pick $\al'\in (\al,-3+2H_1+H_2)$ and use (\ref{rappel-estim-ordre-deux-p})-(\ref{mai-estim-ordre-trois-deux-p}) to get that, for $\mathbf{o}\in \{\mathbf{3.1},\mathbf{3.2}\}$,
\begin{align*}
&\mathbb{E}\bigg[ \bigg( \sup_{\psi\in \cb_0} \sup_{\ell\geq 0} \sup_{x\in \Lambda_\scal^\ell \cap R_{k+1}} 2^{\ell(3\al+4)} |\langle \widehat{\mathbf{B}}^{\mathbf{o},n}_x- \widehat{\mathbf{B}}^{\mathbf{o},m}_x, \psi_x^\ell \rangle |\bigg)^{2p}\bigg]\\
&\lesssim k^{2p}\sum_{\ell \geq 0} |\Lambda_\scal^\ell \cap R_{k+1}| \, 2^{-6\ell p(\al'-\al)}2^{-n\ep} \lesssim k^{2p+2} 2^{-n\ep} \sum_{\ell\geq 0} 2^{-3\ell (2p(\al'-\al)-1)}\ ,
\end{align*}
and of course $\sum_{\ell\geq 0} 2^{-3\ell (2p(\al'-\al)-1)}<\infty$ provided $p$ is chosen large enough. Therefore, going back to (\ref{appli-grr-ordre-trois}) and using also (\ref{estim-mom-x-1})-(\ref{estim-mom-x-2}), we can assert that for every $p$ large enough, one has, for $\mathbf{o}\in \{\mathbf{3.1},\mathbf{3.2}\}$,
\begin{equation}\label{conv-ordr-trois}
\mathbb{E}\big[\lVert \widehat{\mathbf{B}}^{\mathbf{o},n}- \widehat{\mathbf{B}}^{\mathbf{o},m}\rVert_{3\al+4;R_k}^{2p}\big] \lesssim k^{2p+2} 2^{-n\ep} \ .
\end{equation}
Then, since
$$\mathbb{E}\big[ d_\al(\widehat{\mathbf{B}}^n,\widehat{\mathbf{B}}^m)^{2p}\big] \lesssim \sum_{k\geq 0} 2^{-k}\mathbb{E}\big[ \lVert \widehat{\mathbf{B}}^n;\widehat{\mathbf{B}}^m\rVert_{\al;R_k}^{2p}\big] \ ,$$
we can combine (\ref{conv-ordr-trois}) with (\ref{estim-mom-x-1})-(\ref{estim-mom-x-2}) to conclude that every $p$ large enough, $(\widehat{\mathbf{B}}^n)_{n\geq 0}$ is a Cauchy sequence in the complete space $L^{2p}(\Omega;(\mathcal{E}_{K,\al},d_\al))$ (see Remark \ref{rk:d-al}). Its limit provides us with the desired $K$-rough path $\widehat{\mathbf{B}}$ above $\dot{B}$, and we know in addition that $\mathbb{E}\big[ d_\al(\widehat{\mathbf{B}}^n,\widehat{\mathbf{B}})^{2p}\big] \lesssim 2^{-n\ep}$. The almost-sure convergence of $\widehat{\mathbf{B}}^n$ to $\widehat{\mathbf{B}}$ in $(\mathcal{E}_{K,\al},d_\al)$ follows immediately.

\smallskip

\noindent
As far as the identities (\ref{def:hat-b-order-2})-(\ref{def:hat-b-order-3-1}) are concerned, it now suffices to write, for every $n,k$ and $x\in R_k$,
\begin{eqnarray*}
\big| \langle \widehat{\mathbf{B}}^{\mathbf{2}}_x-\widetilde{\mathbf{B}}^{\mathbf{2}}_x,\psi \rangle \big| &\leq & \big| \langle \widehat{\mathbf{B}}^{\mathbf{2}}_x-\widehat{\mathbf{B}}^{\mathbf{2},n}_x,\psi \rangle \big|+\big| \langle \widehat{\mathbf{B}}^{\mathbf{2},n}_x-\widetilde{\mathbf{B}}^{\mathbf{2}}_x,\psi \rangle \big|\\
&\lesssim&\lVert \widehat{\mathbf{B}}^{\mathbf{2}}-\widehat{\mathbf{B}}^{\mathbf{2},n}\rVert_{3\al+4;R_k} +\big| \langle \widehat{\mathbf{B}}^{\mathbf{2},n}_x-\widetilde{\mathbf{B}}^{\mathbf{2}}_x,\psi \rangle \big|
\end{eqnarray*}
and use (\ref{rappel-estim-ordre-deux-p}) again while letting $n$ tend to infinity. The same argument obviously holds for $\widehat{\mathbf{B}}^{\mathbf{3.1}}$ and $\widehat{\mathbf{B}}^{\mathbf{3.2}}$ as well, which achieves the proof of the theorem.
 
\end{proof}

The rest of the section is devoted to the proof of (\ref{rappel-estim-ordre-deux})-(\ref{mai-estim-ordre-trois-deux}). Based on decompositions (\ref{terme-simpl-deux})-(\ref{terme-simpl-trois}), the strategy reduces to controlling the convergence of each summand in these formulas. To this end, our arguments will strongly rely on the general deterministic bounds collected in Appendix \ref{append}, and therefore we are rather confident about the fact that the subsequent computations could easily be extended to a more general class of fractional noises.

\subsection{Convergence in the third chaos}

\begin{proposition}\label{prop:conv-third-chaos}
For every $\al \in (-\frac32,-3+2H_1+H_2)$, there exists $\ep>0$ such that for all $n,\ell\geq 0$, $k\geq 1$, $(s,x)\in R_k$ and $\psi\in \cb$, one has
\begin{equation}\label{third-chaos-1}
\mathbb{E}\Big[\Big|\delta^{B^n}\Big( \psi^\ell_{(s,x)} \cdot \widehat{\mathbf{B}}^{\mathbf{2},n}_{(s,x)}\big(K^{(2)}_{(s,x),.}\big) \Big)-\delta^{B}\Big( \psi^\ell_{(s,x)} \cdot \widehat{\mathbf{B}}^{\mathbf{2}}_{(s,x)}\big(K^{(2)}_{(s,x),.}\big) \Big)\Big|^2 \Big] \lesssim k^2 2^{-n\ep} 2^{-2\ell (3\al+4)} 
\end{equation}
and
\begin{equation}\label{third-chaos-2}
\mathbb{E}\Big[\Big|\delta^{B^n}\Big( \psi^\ell_{(s,x)} \cdot \big(\widehat{\mathbf{B}}^{\mathbf{1},n}_{(s,x)}\big(K^{(1)}_{(s,x),.}\big)\big)^2 \Big)-\delta^{B}\Big( \psi^\ell_{(s,x)} \cdot \big(\widehat{\mathbf{B}}^{\mathbf{1}}_{(s,x)}\big(K^{(1)}_{(s,x),.}\big)\big)^2 \Big)\Big|^2 \Big] \lesssim k^2 2^{-n\ep} 2^{-2\ell (3\al+4)} \ ,
\end{equation}
where the proportional constants are independent of $(n,k,\ell)$ and $\psi$.
\end{proposition}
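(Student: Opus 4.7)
The plan is to convert both Skorohod integrals against $B$ and $B^n$ into Skorohod integrals against the underlying white noise $\widehat{W}$ via (\ref{identi-q})-(\ref{identi-q-n}):
$$\delta^{B^n}(F^n)-\delta^{B}(F)=\delta^{\widehat{W}}\bigl(\cq^n_{H_1,H_2}(F^n)-\cq_{H_1,H_2}(F)\bigr),$$
where $F^n_{(s,x)}:=\psi^\ell_{(s,x)}\cdot \widehat{\mathbf{B}}^{\mathbf{2},n}_{(s,x)}\bigl(K^{(2)}_{(s,x),.}\bigr)$ and $F_{(s,x)}:=\psi^\ell_{(s,x)}\cdot \widehat{\mathbf{B}}^{\mathbf{2}}_{(s,x)}\bigl(K^{(2)}_{(s,x),.}\bigr)$ for (\ref{third-chaos-1}), with the analogous choices involving $\bigl(\widehat{\mathbf{B}}^{\mathbf{1}}(K^{(1)}_{(s,x),.})\bigr)^2$ for (\ref{third-chaos-2}). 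The standard Malliavin isometry
$$\esp\bigl[|\delta^{\widehat{W}}(u)|^2\bigr]\leq \esp\bigl[\lVert u\rVert_{L^2(\R^2)}^2\bigr]+\esp\bigl[\lVert D^{\widehat{W}}u\rVert_{L^2(\R^2)^{\otimes 2}}^2\bigr]$$
then reduces the task to $L^2$-type estimates on the Fourier side.

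My second step is to split
$$\cq^n(F^n)-\cq(F)=\cq\bigl(F^n-F\bigr)+\bigl(\cq^n-\cq\bigr)(F^n)$$
and treat the two pieces separately. For the \emph{approximation term} $\cq(F^n-F)$, the Sobolev bound (\ref{bou-q}) of Lemma \ref{lem:sobolev} reduces matters to controlling the $\lVert\cdot\rVert_{H_1,H_2}$-norm of $\psi^\ell_{(s,x)}\cdot(\widehat{\mathbf{B}}^{\mathbf{2},n}_{(s,x)}-\widehat{\mathbf{B}}^{\mathbf{2}}_{(s,x)})(K^{(2)}_{(s,x),.})$, whose second moment is estimated by combining (\ref{estim-mom-x-2}) from Lemma \ref{lem:results-orders-1-2} with the deterministic bounds on $K^{(2)}$ collected in Appendix \ref{append}; a careful tracking of how $\psi^\ell_{(s,x)}$ enters each of the three cases of Lemma \ref{lem:sobolev} yields both the $2^{-n\ep}$ decay and the $2^{-2\ell(3\al+4)}$ scaling. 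For the \emph{truncation term} $(\cq^n-\cq)(F^n)$, the operator difference introduces the indicator $\1_{\cd_n^c}$, so its $L^2$-norm is dominated by the tail integral
$$\int_{\cd_n^c}|\xi|^{1-2H_1}|\eta|^{1-2H_2}|\fouri{F^n_{(s,x)}}(\xi,\eta)|^2\,d\xi\, d\eta,$$
and the Fourier decay of $\fouri{K^{(2)}}$ supplied by Lemma \ref{lem:estim-k-hat}, combined with the uniform moment bound (\ref{estim-mom-x-2}), then delivers the $2^{-n\ep}$ factor. The Malliavin-derivative contribution $\esp[\lVert D^{\widehat{W}}u\rVert^2]$ is handled by the same mechanism: the chaos formula (\ref{def:hat-b-order-2}) makes $D^{\widehat{W}}\widehat{\mathbf{B}}^{\mathbf{2}}_{(s,x)}(K^{(2)}_{(s,x),.})$ essentially equal to $\widehat{\mathbf{B}}^{\mathbf{1}}(K^{(1)}_{(s,x),.})\cdot K^{(2)}_{(s,x),.}$ (up to the deterministic trace $L_{(s,x)}$), so that the needed bound follows from (\ref{estim-mom-x-1}) and the first-order estimates of Appendix \ref{append}. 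For (\ref{third-chaos-2}) the Leibniz rule yields $D^{\widehat{W}}\bigl(\widehat{\mathbf{B}}^{\mathbf{1}}(K^{(1)}_{(s,x),.})\bigr)^2=2\widehat{\mathbf{B}}^{\mathbf{1}}(K^{(1)}_{(s,x),.})\cdot K^{(1)}_{(s,x),.}$, and the same argument applies verbatim.

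I expect the main technical obstacle to be the precise matching between the $\ell$-scaling produced by the Sobolev norms of Lemma \ref{lem:sobolev}---whose three cases must be treated separately according to the position of $H_1$ and $H_2$ with respect to $\tfrac12$---applied to the rescaled test-function $\psi^\ell_{(s,x)}$, and the target exponent $2^{-2\ell(3\al+4)}$ of the statement. Since $3\al+4$ can be arbitrarily close to $0$ when $2H_1+H_2$ approaches $\tfrac32$, any slack in the way the fractional weights $|\xi|^{1-2H_1}|\eta|^{1-2H_2}$, the localized heat kernel $K$ and the rescaled test-function interact on the Fourier side would be fatal; it is precisely here that the deterministic estimates on $K^{(1)}$ and $K^{(2)}$ of Appendix \ref{append} play their critical role.
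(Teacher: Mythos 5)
Your proposal follows the paper's own argument in its essential steps: transfer to $\delta^{\widehat{W}}$ via (\ref{identi-q})--(\ref{identi-q-n}); the splitting $\cq^n_{H_1,H_2}(F^n)-\cq_{H_1,H_2}(F)=\cq_{H_1,H_2}(F^n-F)+\{\cq^n_{H_1,H_2}-\cq_{H_1,H_2}\}(F^n)$; and the estimation of the approximation term by feeding the deterministic increment bounds of Appendix \ref{append} (precisely (\ref{estim-x-2-1-diff}), (\ref{estim-x-2-2-time-diff}), (\ref{estim-x-2-2-diff}) for (\ref{third-chaos-1}), and (\ref{estim-x-1-1}) together with $K^{(1)}_{(s,x),(t,y)}-K^{(1)}_{(s,x),(u,z)}=K^{(1)}_{(u,z),(t,y)}$ for (\ref{third-chaos-2})) into the relevant regime of Lemma \ref{lem:sobolev}, then invoking the moment bounds (\ref{estim-mom-x-1})--(\ref{estim-mom-x-2}).

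Two points diverge, one of which matters. The mechanism you name for the truncation term does not work as stated: Lemma \ref{lem:estim-k-hat} bounds $\fouri{K}$, whereas the quantity appearing in your tail integral is $\fouri{F^n_{(s,x)}}$ with $F^n_{(s,x)}=\psi^\ell_{(s,x)}\cdot\widehat{\mathbf{B}}^{\mathbf{2},n}_{(s,x)}\big(K^{(2)}_{(s,x),\cdot}\big)$, whose Fourier transform inherits no exploitable decay from $\fouri{K^{(2)}}$ (the kernel sits inside the argument of the distribution $\widehat{\mathbf{B}}^{\mathbf{2},n}_{(s,x)}$, evaluated at moving endpoints). The device actually used is elementary but different: on $\R^2\setminus\cd_n$ one has $|\xi|\geq 2^{2n}$ or $|\eta|\geq 2^n$, so the weight $|\xi|^{1-2H_1}|\eta|^{1-2H_2}$ can be traded for $2^{-n\ep'}$ times the weight with $H_1-\ep$ (resp. $H_2-\ep$) in place of $H_1$ (resp. $H_2$); the truncation term is thus dominated by $2^{-n\ep'}\big\{\lVert\cq_{H_1-\ep,H_2}(F^n)\rVert_{L^2}^2+\lVert\cq_{H_1,H_2-\ep}(F^n)\rVert_{L^2}^2\big\}$, and each piece is estimated by re-running the same Sobolev argument with the perturbed exponents, now using the non-difference bounds (\ref{estim-x-2-1}), (\ref{estim-x-2-2-time}), (\ref{estim-x-2-2}). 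Second, the paper carries no Malliavin-derivative term at this stage: it passes directly from $\mathbb{E}\big[|\delta^{\widehat{W}}(u^n)-\delta^{\widehat{W}}(u)|^2\big]$ to $\mathbb{E}\big[\lVert u^n-u\rVert_{L^2(\R^2)}^2\big]$, so the additional term $\mathbb{E}\big[\lVert D^{\widehat{W}}u\rVert^2\big]$ is extra work your formulation creates for itself; your sketch for absorbing it (differentiating the chaos decomposition and using (\ref{estim-mom-x-1}) with the first-order Appendix bounds) is plausible, but it would have to be carried out with the same two-variable Sobolev care as the main term rather than dismissed as following ``verbatim.''
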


\begin{proof}
Let us set
$$
V^{n,\ell}_{(s,x)}(t,y) := \psi^\ell_{(s,x)}(t,y) \, \widehat{\mathbf{B}}^{\mathbf{2},n}_{(s,x)}\big(K^{(2)}_{(s,x),(t,y)}\big) \quad \text{and} \quad V^{\ell}_{(s,x)}(t,y) := \psi^\ell_{(s,x)}(t,y) \, \widehat{\mathbf{B}}^{\mathbf{2}}_{(s,x)}\big(K^{(2)}_{(s,x),(t,y)}\big) \ .
$$
Then by (\ref{identi-q}) and (\ref{identi-q-n}), and for $\ep >0$ small enough, we have that
\begin{eqnarray}
\lefteqn{\mathbb{E}\Big[\Big|\delta^{B^n}\big(V^{n,\ell}_{(s,x)}  \big)-\delta^{B}\big( V^{\ell}_{(s,x)} \big)\Big|^2 \Big] \ = \ \mathbb{E}\Big[\Big|\delta^{\widehat{W}}\big(\cq^n_{H_1,H_2}\big( V^{n,\ell}_{(s,x)}\big)  \big)-\delta^{\widehat{W}}\big( \cq_{H_1,H_2}\big(V^{\ell}_{(s,x)}\big) \big)\Big|^2 \Big]}\nonumber\\
&=& \mathbb{E}\Big[\big\lVert \cq^n_{H_1,H_2}\big( V^{n,\ell}_{(s,x)}\big)  - \cq_{H_1,H_2}\big(V^{\ell}_{(s,x)}\big) \big\rVert^2_{L^2(\R^2)} \Big]\nonumber\\
&\lesssim& \mathbb{E}\Big[\big\lVert \big\{ \cq^n_{H_1,H_2}-\cq_{H_1,H_2}\big\}\big( V^{n,\ell}_{(s,x)}\big) \big\rVert^2_{L^2(\R^2)} \Big]+\mathbb{E}\Big[\big\lVert \cq_{H_1,H_2}\big( V^{n,\ell}_{(s,x)}-V^{\ell}_{(s,x)}\big) \big\rVert^2_{L^2(\R^2)}  \Big]\nonumber\\
&\lesssim & 2^{-n\ep} \Big\{  \mathbb{E}\Big[\big\lVert \cq_{H_1-\ep,H_2}\big( V^{n,\ell}_{(s,x)}\big) \big\rVert^2_{L^2(\R^2)} \Big]+ \mathbb{E}\Big[\big\lVert\cq_{H_1,H_2-\ep}\big( V^{n,\ell}_{(s,x)}\big) \big\rVert^2_{L^2(\R^2)} \Big] \Big\}\nonumber\\
& & \hspace{4cm}+\mathbb{E}\Big[\big\lVert \cq_{H_1,H_2}\big( V^{n,\ell}_{(s,x)}-V^{\ell}_{(s,x)}\big) \big\rVert^2_{L^2(\R^2)}  \Big] \ .\label{pr-third}
\end{eqnarray}
At this point, let us turn to the estimates of Lemma \ref{lem:sobolev}, and focus first on the situation where $H_1 >\frac12$ and $H_2 < \frac12$. We get in this case that
\begin{eqnarray}\label{proof-third-chaos}
\lefteqn{\big\lVert \cq_{H_1,H_2}\big( V^{n,\ell}_{(s,x)}-V^{\ell}_{(s,x)}\big) \big\rVert^2_{L^2(\R^2)}}\nonumber\\
&\lesssim& 2^{-2\ell (2H_1-\frac52)} \int_{\R} dy \int_{\R} dt\, \bigg( \int_{\R} du \, \frac{P^{n,\ell}_{(s,x)}(u,y)}{|t-u|^{\frac32-H_1}} \bigg)^2\nonumber\\
& &+2^{-2\ell (-3+2H_1+H_2)} \int_{\R^2} \frac{dy dz}{|y-z|^{2-2H_2}} \int_{\R} dt\, \bigg( \int_{\R} du \, \frac{P^{n,\ell}_{(s,x)}(u,y)-P^{n,\ell}_{(s,x)}(u,z)}{|t-u|^{\frac32-H_1}}  \bigg)^2 \ ,\label{proof-third-chaos-1}
\end{eqnarray}
with
$$P^{n,\ell}_{(s,x)}(u,y):=\psi(u,y) \, \big\{\widehat{\mathbf{B}}^{\mathbf{2},n}_{(s,x)}-\widehat{\mathbf{B}}^{\mathbf{2}}_{(s,x)} \big\} \big( K^{(2)}_{(s,x),(s+2^{-2\ell} u,x+2^{-\ell} y)} \big) \ .$$
Now we can combine the two estimates (\ref{estim-x-2-1-diff}) and (\ref{estim-x-2-2-diff}) to retrieve the following bounds:
\begin{equation}\label{p-n-1}
\big| P^{n,\ell}_{(s,x)}(u,y) \big| \lesssim 2^{-\ell(2\al+4)} \cdot C_{n,k}\cdot \1_{\{ |u|\leq 1,|y|\leq 1\}} 
\end{equation}
and 
\begin{equation}\label{p-n-2}
\big| P^{n,\ell}_{(s,x)}(u,y)-P^{n,\ell}_{(s,x)}(u,z) \big| \lesssim 2^{-\ell(2\al+4)} \cdot C_{n,k}\cdot\1_{\{ |u|\leq 1\}} \cdot \big[ \1_{\{ |y|\leq 1,|z|\leq 1\}} \cdot |y-z|^{\al+2}+\1_{\{ |y|\leq 1,|z|\geq 1\}}+\1_{\{ |y|\geq 1,|z|\leq 1\}} \big] \ ,
\end{equation}
where we have set
$$C_{n,k}:=\lVert \widehat{\mathbf{B}}^{\mathbf{2},n}-\widehat{\mathbf{B}}^{\mathbf{2}}\rVert_{2\al+2;R_{k+1}}+\lVert \widehat{\mathbf{B}}^{\mathbf{1},n}-\widehat{\mathbf{B}}^{\mathbf{1}}\rVert_{\al;R_{k+1}} \lVert \widehat{\mathbf{B}}^{\mathbf{1},n} \rVert_{\al;R_{k+1}}+\lVert  \widehat{\mathbf{B}}^{\mathbf{1}}\rVert_{\al;R_{k+1}} \lVert \widehat{\mathbf{B}}^{\mathbf{1},n}-\widehat{\mathbf{B}}^{\mathbf{1}} \rVert_{\al;R_{k+1}} \ .$$
Injecting (\ref{p-n-1})-(\ref{p-n-2}) into (\ref{proof-third-chaos-1}) easily leads us to the estimate
$$\mathbb{E}\Big[\big\lVert \cq_{H_1,H_2}\big( V^{n,\ell}_{(s,x)}-V^{\ell}_{(s,x)}\big) \big\rVert^2_{L^2(\R^2)}  \Big] \lesssim 2^{-2\ell(3\al+4)} \, \mathbb{E}\big[C_{n,k}^2 \big]\lesssim k^22^{-2\ell(3\al+4)}2^{-n\ep} \ ,$$
where we have used (\ref{estim-mom-x-1}) and (\ref{estim-mom-x-2}) to deduce the last inequality. 

\smallskip

\noindent
By following similar arguments (just replace the use of (\ref{estim-x-2-1-diff}) and (\ref{estim-x-2-2-diff}) with (\ref{estim-x-2-1}) and (\ref{estim-x-2-2})), we also get that 
$$\mathbb{E}\Big[\big\lVert \cq_{H_1-\ep,H_2}\big( V^{n,\ell}_{(s,x)}\big) \big\rVert^2_{L^2(\R^2)} \Big]\lesssim k^2 2^{-2\ell(3\al+4)} \quad \text{and}\quad \mathbb{E}\Big[\big\lVert \cq_{H_1,H_2-\ep}\big( V^{n,\ell}_{(s,x)}\big) \big\rVert^2_{L^2(\R^2)} \Big]\lesssim k^2 2^{-2\ell(3\al+4)} \ ,$$
provided $\ep>0$ is picked small enough. Going back to (\ref{pr-third}), we have thus shown (\ref{third-chaos-1}) in the situation where $H_1>\frac12$ and $H_2<\frac12$.

\smallskip

The other situations can be dealt along the same procedure: if $(H_1,H_2) \in (\frac12,1)^2$, resp. $(H_1,H_2)\in (\frac12,1)\times (0,\frac12)$, we lean successively on (\ref{sobolev-2}) and (\ref{estim-x-2-1})-(\ref{estim-x-2-1-diff}), resp. (\ref{sobolev-1}) and (\ref{estim-x-2-2-time})-(\ref{estim-x-2-2-time-diff}). Finally, note that the condition $2H_1+H_2 > \frac32$ rules out the case where $(H_1,H_2)\in (0,\frac12)^2$.

\smallskip

It is then not hard to see that we can mimic the above arguments in order to prove (\ref{third-chaos-2}). In fact, it suffices to replace the above quantities $V^{n,\ell}_{(s,x)}(t,y)$, $V^{\ell}_{(s,x)}(t,y)$ and $P^{n,\ell}_{(s,x)}$ with
$$
V^{n,\ell}_{(s,x)}(t,y) := \psi^\ell_{(s,x)}(t,y) \, \big( \widehat{\mathbf{B}}^{\mathbf{1},n}\big(K^{(1)}_{(s,x),(t,y)}\big)\big)^2 \quad , \quad V^{\ell}_{(s,x)}(t,y) := \psi^\ell_{(s,x)}(t,y) \, \big(\widehat{\mathbf{B}}^{\mathbf{1}}\big(K^{(1)}_{(s,x),(t,y)}\big)\big)^2 \ ,
$$
$$P^{n,\ell}_{(s,x)}(u,y)=\psi(u,y) \, \big\{ \big( \widehat{\mathbf{B}}^{\mathbf{1},n}_{(s,x)}\big(K^{(1)}_{(s,x),(s+2^{-2\ell} u,+2^{-\ell} y)}\big)\big)^2-\big( \widehat{\mathbf{B}}^{\mathbf{1}}_{(s,x)}\big(K^{(1)}_{(s,x),(s+2^{-2\ell} u,+2^{-\ell} y}\big)\big)^2 \big\} \ ,$$
and check with the help of (\ref{estim-x-1-1}) that both estimates (\ref{p-n-1}) and (\ref{p-n-2}) still hold true in this case, by using also the basic relation 
$$K^{(1)}_{(s,x),(t,y)}-K^{(1)}_{(s,x),(u,z)}=K^{(1)}_{(u,z),(t,y)} \ .$$

\end{proof}

\subsection{Convergence in the second chaos}

\begin{proposition}
For every $\al \in (-\frac32,-3+2H_1+H_2)$, there exists $\ep>0$ such that for all $n,\ell\geq 0$, $k\geq 1$, $(s,x)\in R_k$ and $\psi\in \cb$, one has
\begin{equation}\label{second-chaos}
\mathbb{E}\Big[\Big|\delta^{B^n}\Big( \psi^\ell_{(s,x)} \cdot  \widehat{\mathbf{B}}^{\mathbf{1},n}\big(K^{(1)}_{(s,x),.}\big) \Big)-\delta^{B}\Big( \psi^\ell_{(s,x)} \cdot  \widehat{\mathbf{B}}^{\mathbf{1}}\big(K^{(1)}_{(s,x),.}\big) \Big)\Big|^2 \Big] \lesssim k^2 2^{-n\ep} 2^{-2\ell (2\al+2)} 
\end{equation}
where the proportional constant is independent of $(n,k,\ell)$ and $\psi$.
\end{proposition}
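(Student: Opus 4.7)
The plan is to mimic the proof of Proposition \ref{prop:conv-third-chaos}, substituting the second-order pair $(\widehat{\mathbf{B}}^{\mathbf{2}}, K^{(2)})$ by the first-order pair $(\widehat{\mathbf{B}}^{\mathbf{1}}, K^{(1)})$ throughout. Specifically, I would set
$$U^{n,\ell}_{(s,x)}(t,y) := \psi^\ell_{(s,x)}(t,y)\, \widehat{\mathbf{B}}^{\mathbf{1},n}\big(K^{(1)}_{(s,x),(t,y)}\big), \qquad U^{\ell}_{(s,x)}(t,y) := \psi^\ell_{(s,x)}(t,y)\, \widehat{\mathbf{B}}^{\mathbf{1}}\big(K^{(1)}_{(s,x),(t,y)}\big),$$
noting that since the renormalization map of Lemma \ref{lem:transfo-m} leaves the first level untouched, $\widehat{\mathbf{B}}^{\mathbf{1},n}=\dot{B}^n$ and $\widehat{\mathbf{B}}^{\mathbf{1}}=\dot{B}$. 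Using the isometry identities (\ref{identi-q})--(\ref{identi-q-n}) and the $L^2$-isometry of $\delta^{\widehat{W}}$, the left-hand side of (\ref{second-chaos}) rewrites as $\mathbb{E}[\lVert \cq^n_{H_1,H_2}(U^{n,\ell}_{(s,x)}) - \cq_{H_1,H_2}(U^{\ell}_{(s,x)})\rVert^2_{L^2(\R^2)}]$. As in (\ref{pr-third}), triangulating and trading the indicator $\1_{\cd_n^c}$ appearing in $\cq^n-\cq$ against a small $2^{-n\ep}$ factor by shifting one of the Hurst parameters by $\ep$, this quantity is bounded by
$$2^{-n\ep}\big\{\lVert \cq_{H_1-\ep,H_2}(U^{n,\ell}_{(s,x)})\rVert^2_{L^2}+\lVert \cq_{H_1,H_2-\ep}(U^{n,\ell}_{(s,x)})\rVert^2_{L^2}\big\}+\lVert \cq_{H_1,H_2}(U^{n,\ell}_{(s,x)}-U^{\ell}_{(s,x)})\rVert^2_{L^2}.$$

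Each of these three $\cq$-norms would then be majorized via Lemma \ref{lem:sobolev}, selecting the relevant Sobolev weight according to the position of $H_1,H_2$ relative to $\tfrac12$; the regime $(H_1,H_2)\in(0,\tfrac12)^2$ is excluded by the standing assumption $2H_1+H_2>\tfrac32$. The pointwise input for these integrals is the first-order estimate (\ref{estim-x-1-1}) from Appendix \ref{append}, which yields
$$\big|\psi(u,y)\{\widehat{\mathbf{B}}^{\mathbf{1},n}-\widehat{\mathbf{B}}^{\mathbf{1}}\}\big(K^{(1)}_{(s,x),(s+2^{-2\ell}u,x+2^{-\ell}y)}\big)\big|\lesssim 2^{-\ell(\al+2)}\lVert \widehat{\mathbf{B}}^{\mathbf{1},n}-\widehat{\mathbf{B}}^{\mathbf{1}}\rVert_{\al;R_{k+1}}\,\1_{\{|u|\leq 1,|y|\leq 1\}},$$
together with its spatial-increment analogue controlled by $2^{-\ell(\al+2)}|y-z|^{\al+2}\lVert \widehat{\mathbf{B}}^{\mathbf{1},n}-\widehat{\mathbf{B}}^{\mathbf{1}}\rVert_{\al;R_{k+1}}$, obtained via the identity $K^{(1)}_{(s,x),(t,y)}-K^{(1)}_{(s,x),(u,z)}=K^{(1)}_{(u,z),(t,y)}$. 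Plugging these into the Sobolev integrals of Lemma \ref{lem:sobolev} and finally taking expectation through the moment bound (\ref{estim-mom-x-1}) produces the claimed $k^2\cdot 2^{-n\ep}\cdot 2^{-2\ell(2\al+2)}$.

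The main (and essentially bookkeeping) obstacle will be checking, in each of the three admissible $(H_1,H_2)$-regimes, that the scaling exponent $2(\al+2)$ generated by the two instances of $K^{(1)}$ combines correctly with the $(\tfrac32-H_i)$-type Sobolev weights of Lemma \ref{lem:sobolev} to reproduce the Besov exponent $2\al+2$. This step is structurally simpler than its counterpart in Proposition \ref{prop:conv-third-chaos}: the less singular kernel $K^{(1)}$ replaces $K^{(2)}$, and the three-term quantity $C_{n,k}$ of that proof collapses to the single term $\lVert \widehat{\mathbf{B}}^{\mathbf{1},n}-\widehat{\mathbf{B}}^{\mathbf{1}}\rVert_{\al;R_{k+1}}$. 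In particular, no tool beyond Lemmas \ref{lem:sobolev} and \ref{lem:results-orders-1-2} and the first-order estimates of Appendix \ref{append} will be required.
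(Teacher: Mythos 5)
Your proposal follows exactly the paper's own route: the paper likewise reduces (\ref{second-chaos}) to the scheme of Proposition \ref{prop:conv-third-chaos} with $V^{\ell}_{(s,x)}(t,y):=\psi^\ell_{(s,x)}(t,y)\,\widehat{\mathbf{B}}^{\mathbf{1}}\big(K^{(1)}_{(s,x),(t,y)}\big)$, the isometry identities (\ref{identi-q})--(\ref{identi-q-n}), the Sobolev bounds of Lemma \ref{lem:sobolev}, the pointwise first-order estimate (\ref{estim-x-1-1}) (including the spatial-increment version via $K^{(1)}_{(s,x),(t,y)}-K^{(1)}_{(s,x),(u,z)}=K^{(1)}_{(u,z),(t,y)}$), and the moment bound (\ref{estim-mom-x-1}). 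The argument and all the ingredients match; only the harmless slip of attributing the exponent $2(\al+2)$ to ``two instances of $K^{(1)}$'' (it comes from squaring the single occurrence) would need rewording.
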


\begin{proof}
The argument is very similar as the one in the proof of Proposition \ref{prop:conv-third-chaos}, and therefore we will only focus on the main ideas. In fact, by setting
$$V^{\ell}_{(s,x)}(t,y) := \psi^\ell_{(s,x)}(t,y) \, \widehat{\mathbf{B}}^{\mathbf{1}}\big(K^{(1)}_{(s,x),(t,y)}\big) \ ,$$
observe that if for instance $H_1>\frac12$ and $H_2<\frac12$, one has by (\ref{sobolev-1})
\begin{eqnarray*}
\lefteqn{\big\lVert \cq_{H_1,H_2}\big( V^{\ell}_{(s,x)}-V^{\ell}_{(s,x)}\big) \big\rVert^2_{L^2(\R^2)}}\\
&\lesssim& 2^{-2\ell (2H_1-\frac52)} \int_{\R} dy \int_{\R} dt\, \bigg( \int_{\R} du \, \frac{P^{\ell}_{(s,x)}(u,y)}{|t-u|^{\frac32-H_1}} \bigg)^2\\
& &+2^{-2\ell (-3+2H_1+H_2)} \int_{\R^2} \frac{dy dz}{|y-z|^{2-2H_2}} \int_{\R} dt\, \bigg( \int_{\R} du \, \frac{P^{\ell}_{(s,x)}(u,y)-P^{\ell}_{(s,x)}(u,z)}{|t-u|^{\frac32-H_1}}  \bigg)^2 \ ,
\end{eqnarray*}
with
$$P^{\ell}_{(s,x)}(u,y):=\psi(u,y) \, \widehat{\mathbf{B}}^{\mathbf{1}} \big( K^{(1)}_{(s,x),(s+2^{-2\ell} u,x+2^{-\ell} y)} \big) \ .$$
Besides, thanks to (\ref{estim-x-1-1}), we know that
$$
\big| P^{\ell}_{(s,x)}(u,y) \big| \lesssim 2^{-\ell(\al+2)} \cdot \lVert \widehat{\mathbf{B}}^{\mathbf{1}}\rVert_{\al,R_{k+1}}\cdot \1_{\{ |u|\leq 1,|y|\leq 1\}} 
$$
and 
$$
\big| P^{\ell}_{(s,x)}(u,y)-P^{\ell}_{(s,x)}(u,z) \big| \lesssim 2^{-\ell(\al+2)} \cdot \lVert \widehat{\mathbf{B}}^{\mathbf{1}}\rVert_{\al,R_{k+1}}\cdot\1_{\{ |u|\leq 1\}} \cdot \big[ \1_{\{ |y|\leq 1,|z|\leq 1\}} \cdot |y-z|^{\al+2}+\1_{\{ |y|\leq 1,|z|\geq 1\}}+\1_{\{ |y|\geq 1,|z|\leq 1\}} \big] \ .
$$
With these estimates in mind, we can easily mimic the proof of Proposition \ref{prop:conv-third-chaos} and conclude that, for $\ep >0$ small enough,
\begin{align*}
&\mathbb{E}\Big[\Big|\delta^{B^n}\Big( \psi^\ell_{(s,x)} \cdot  \widehat{\mathbf{B}}^{\mathbf{1},n}\big(K^{(1)}_{(s,x),.}\big) \Big)-\delta^{B}\Big( \psi^\ell_{(s,x)} \cdot  \widehat{\mathbf{B}}^{\mathbf{1}}\big(K^{(1)}_{(s,x),.}\big) \Big)\Big|^2 \Big]\\
& \lesssim 2^{-2\ell(2\al+2)} \big\{\mathbb{E}\big[\lVert \widehat{\mathbf{B}}^{\mathbf{1},n}-\widehat{\mathbf{B}}^{\mathbf{1}}\rVert_{\al,R_{k+1}}^2\big]+2^{-n\ep}\mathbb{E}\big[\lVert \widehat{\mathbf{B}}^{\mathbf{1}}\rVert_{\al,R_{k+1}}^2\big] \big\} \lesssim k^2 2^{-n\ep} 2^{-2\ell (2\al+2)}\ ,
\end{align*}
where we have used (\ref{estim-mom-x-1}) to derive the last inequality.
\end{proof}

\subsection{Convergence in the first chaos: case of $L^{1,n,\dot{B}^n}$}

\begin{proposition}\label{prop-l-x-1}
For every $\al \in (-\frac32,-3+2H_1+H_2)$, there exists $\ep>0$ such that for all $X,Y \in \cac_c^\al(\R^2)$, $n,\ell\geq 0$, $k\geq 1$, $(s,x)\in R_k$ and $\psi\in \cb$, one has
\begin{equation}\label{estim-l-x-1}
\big|\big\{ L^{1,n,X}_{(s,x)}-L^{1,Y}_{(s,x)}\big\}(\psi_{(s,x)}^\ell) \big| \lesssim 2^{-\ell(3\al+4)} \{2^{-n\ep} \lVert X \rVert_{\al;R_{k+1}}+ \lVert X-Y \rVert_{\al;R_{k+1}} \} \ ,
\end{equation}
where the proportional constant is independent of $(n,k,\ell)$ and $\psi$. As a consequence of (\ref{estim-l-x-1}) and (\ref{estim-mom-x-1}), one has in particular 
\begin{equation}\label{estim-l-x-1-appli}
\mathbb{E} \big[\big|\big\{ L^{1,n,\dot{B}^n}_{(s,x)}-L^{1,\dot{B}}_{(s,x)}\big\}(\psi_{(s,x)}^\ell) \big|^2\big] \lesssim k^2 2^{-2\ell(3\al+4)} 2^{-2n\ep}  \ .
\end{equation}
\end{proposition}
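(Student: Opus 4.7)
The plan is to split
\begin{align*}
\bigl\{L^{1,n,X}_{(s,x)} - L^{1,Y}_{(s,x)}\bigr\}(\psi^\ell_{(s,x)})
&= \bigl\{L^{1,n,X}_{(s,x)} - L^{1,X}_{(s,x)}\bigr\}(\psi^\ell_{(s,x)}) \\
&\quad + \bigl\{L^{1,X}_{(s,x)} - L^{1,Y}_{(s,x)}\bigr\}(\psi^\ell_{(s,x)})
\end{align*}
and to treat the two parts via the same Fourier-integral argument. By the very definitions of $L^{1,n,\cdot}$ and $L^{1,\cdot}$, combined with the linearity of $\Theta^{(1)}_{(s,x)}(\psi, \cdot)$ in its second (distribution) argument, both terms can be written in the form
$$-c_{H_1,H_2}^2 \iint_A \frac{d\xi\, d\eta}{|\xi|^{2H_1-1}|\eta|^{2H_2-1}}\, e^{\imath \xi s+\imath\eta x}\, \mathcal{F}(K)(\xi,\eta)\, \mathcal{F}\bigl(\Theta^{(1)}_{(s,x)}(\psi^\ell_{(s,x)},Z)\bigr)(\xi,\eta),$$
with $(A,Z) = (\cd_n^c, X)$ in the first case and $(A,Z) = (\R^2, X-Y)$ in the second.

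The core step is then to establish a uniform bound of the form
$$\iint_{\R^2} \frac{d\xi\, d\eta}{|\xi|^{2H_1-1}|\eta|^{2H_2-1}} \bigl|\mathcal{F}(K)(\xi,\eta)\bigr|\, \bigl|\mathcal{F}(\Theta^{(1)}_{(s,x)}(\psi^\ell_{(s,x)}, Z))(\xi,\eta)\bigr| \lesssim 2^{-\ell(3\al+4)} \lVert Z\rVert_{\al;R_{k+1}}.$$
To prove this I would combine Lemma~\ref{lem:estim-k-hat} (yielding $|\mathcal{F}(K)(\xi,\eta)| \lesssim |\xi|^{-a}|\eta|^{-2b}$ for any pair $(a,b)$ with $a+b<1$, to be tuned) with the deterministic appendix estimate $|Z(K^{(1)}_{(s,x),(t,y)})| \lesssim \lVert Z\rVert_{\al;R_{k+1}} \lVert (t-s,y-x)\rVert_\scal^{\al+2}$. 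Coupled with the parabolic support constraints $|t-s| \lesssim 2^{-2\ell}$, $|y-x|\lesssim 2^{-\ell}$ inherited from $\psi^\ell_{(s,x)}$, a parabolic rescaling $(\xi,\eta)\mapsto (2^{2\ell}\xi', 2^\ell \eta')$ and $(t,y)\mapsto (s+2^{-2\ell}t', x+2^{-\ell}y')$ reduces the integral to a scale-invariant expression times the announced factor $2^{-\ell(3\al+4)}$. The assumption $2H_1+H_2>3/2$ is what will guarantee integrability near the origin, while the freedom in $(a,b)$ of Lemma~\ref{lem:estim-k-hat} will handle the high-frequency tail.

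Applying this uniform bound with $Z=X-Y$ directly yields the $\lVert X-Y\rVert_{\al;R_{k+1}}$ contribution on the right-hand side of (\ref{estim-l-x-1}). For the truncation piece, I would run the same parabolic rescaling but restrict the rescaled frequency domain to the image of $\cd_n^c$; since $(\xi,\eta)\in \cd_n^c$ forces $|\xi|\geq 2^{2n}$ or $|\eta|\geq 2^n$, perturbing $(a,b)$ into $(a+\ep,b)$ or $(a,b+\ep)$ in Lemma~\ref{lem:estim-k-hat} extracts a small gain $2^{-n\ep}$ while preserving integrability and the $2^{-\ell(3\al+4)}$ scaling, and leaving a factor $\lVert X\rVert_{\al;R_{k+1}}$. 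Finally, (\ref{estim-l-x-1-appli}) will follow by squaring (\ref{estim-l-x-1}), taking expectations with $X=\dot B^n$ and $Y=\dot B$, and invoking (\ref{estim-mom-x-1}) to control $\mathbb{E}\bigl[\lVert \dot B^n\rVert_{\al;R_{k+1}}^2\bigr]$ and $\mathbb{E}\bigl[\lVert \dot B^n-\dot B\rVert_{\al;R_{k+1}}^2\bigr]$. The main obstacle I anticipate is the simultaneous calibration of $(a,b)$ in Lemma~\ref{lem:estim-k-hat} against the fractional weights $|\xi|^{-(2H_1-1)}|\eta|^{-(2H_2-1)}$ so that the rescaled Fourier integral converges at both the origin and infinity, and the $(a,b)$-perturbation trick that produces the $2^{-n\ep}$ factor in the truncated regime without degrading the $\ell$-scaling.
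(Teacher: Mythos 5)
Your splitting into the two pieces $(A,Z)=(\cd_n^c,X)$ and $(A,Z)=(\R^2,X-Y)$, and the idea of extracting the factor $2^{-n\ep}$ by slightly perturbing exponents on $\cd_n^c$, both coincide with what the paper does. The gap is in your ``core step''. The only information you propose to use about $\Theta^{(1)}_{(s,x)}(\psi^\ell_{(s,x)},Z)$ is the size bound $|Z(K^{(1)}_{(s,x),(t,y)})|\lesssim \lVert Z\rVert_{\al;R_{k+1}}\lVert(t-s,y-x)\rVert_\scal^{\al+2}$ together with the support constraint; after the parabolic rescaling this only controls $\lVert \mathcal{F}(\Theta^{(1)})\rVert_{L^\infty}$ and leaves you with the ``scale-invariant expression''
$$\iint_{\R^2}\frac{d\xi\, d\eta}{|\xi|^{2H_1-1}|\eta|^{2H_2-1}}\,\big|\mathcal{F}(K)(\xi,\eta)\big| \ ,$$
which is \emph{not} finite in the regime $\frac32<2H_1+H_2\leq 2$ covered by the theorem. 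Indeed, in the region $|\xi|,|\eta|\geq 1$ Lemma \ref{lem:estim-k-hat} would require $2H_1-1+a>1$ and $2H_2-1+2b>1$, i.e.\ $a+b>3-2H_1-H_2\geq 1$, contradicting the constraint $a+b<1$ (and already the mixed region $|\xi|\geq1$, $|\eta|\leq1$ fails when $H_1\leq\frac12$). This divergence is not an artifact: it is precisely why the renormalization constant $c^n_{H_1,H_2}$ in (\ref{defi-seq-reno}) blows up for $2H_1+H_2\leq2$ (see (\ref{estim-cstt})). So no calibration of $(a,b)$ can rescue a bound that discards the oscillation/decay of $\mathcal{F}(\Theta^{(1)})$ at high frequency.

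What is missing is a mechanism that shifts part of the high-frequency burden from $\mathcal{F}(K)$ onto $\mathcal{F}(\Theta^{(1)})$. The paper does this by Cauchy--Schwarz with tuned weights: it splits the measure as $|\xi|^{-(4H_1-2-\la_1)}|\eta|^{-(4H_2-2-\la_2)}|\mathcal{F}(K)|^2$ against $|\xi|^{-\la_1}|\eta|^{-\la_2}|\mathcal{F}(F^\ell_{(s,x)})|^2$, with $(\la_1,\la_2)$ chosen as in (\ref{param-cs-1})--(\ref{param-cs-2}); the first factor is then finite by Lemma \ref{lem:estim-k-hat}, and the second is a weighted $L^2$ norm of $\mathcal{F}(F^\ell_{(s,x)})$ which Lemma \ref{lem:sobolev} converts into a fractional Sobolev norm of $F^\ell_{(s,x)}$. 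Controlling that norm requires not just the sup bound (\ref{estim-x-1-1}) but the H\"older-type \emph{increment} bounds on $(t,y)\mapsto Z(K^{(1)}_{(s,x),(t,y)})$ (order $\al+2$ in the parabolic distance), which is exactly the regularity input your plan omits. You would need to incorporate this Cauchy--Schwarz/Sobolev step (or an equivalent way of quantifying the decay of $\mathcal{F}(\Theta^{(1)})$) before the remainder of your argument, including the $2^{-n\ep}$ extraction on $\cd_n^c$ and the final passage to (\ref{estim-l-x-1-appli}) via (\ref{estim-mom-x-1}), goes through.
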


\begin{proof}
Let us start with the basic inequality
\begin{equation}\label{split-i-ii}
\big|\big\{ L^{1,n,X}_{(s,x)}-L^{1,Y}_{(s,x)}\big\}(\psi_{(s,x)}^\ell) \big| \lesssim |I^{\ell}_{(s,x)}|+|II^{n,\ell}_{(s,x)}| \ ,
\end{equation}
with
$$
I^{\ell}_{(s,x)}:=
\iint_{\R^2} \frac{d\xi d\eta}{|\xi|^{2H_1-1} |\eta|^{2H_2-1}}\Big|\fouri{K}(\xi,\eta)\iint_{\R^2} dt dy \, e^{-\imath \xi(t-s)} e^{-\imath \eta (y-x)}\psi^\ell_{(s,x)}(t,y) \{X-Y\}\big(K^{(1)}_{(s,x),(t,y)}\big) \Big|
$$
and
$$II^{n,\ell}_{(s,x)}:=\iint_{\R^2 \backslash \cd_n} \frac{d\xi d\eta}{|\xi|^{2H_1-1} |\eta|^{2H_2-1}}\Big|\fouri{K}(\xi,\eta)\iint_{\R^2} dt dy \, e^{-\imath \xi(t-s)} e^{-\imath \eta (y-x)}\psi^\ell_{(s,x)}(t,y) X\big(K^{(1)}_{(s,x),(t,y)}\big)\Big| \ .$$

\

\noindent
\textbf{Estimation of $I^{\ell}_{(s,x)}$.} Using elementary changes of variables, we can write $I^{\ell}_{(s,x)}$ as
\begin{equation}\label{defi-i-n-l}
I^{\ell}_{(s,x)} =\iint_{\R^2} \frac{d\xi d\eta}{|\xi|^{2H_1-1} |\eta|^{2H_2-1}}\fouri{K}(\xi,\eta) \fouri{F^{\ell}_{(s,x)}}(2^{-2\ell} \xi,2^{-\ell}\eta)
\end{equation}
with
\begin{equation}\label{f-n-l}
F^{\ell}_{(s,x)}(t,y):=\psi(t,y)\cdot \big\{X-Y\big\}\big(K^{(1)}_{(s,x),(s+2^{-2\ell}t,x+2^{-\ell}y)}\big) \ .
\end{equation}
At this point, pick $\la_1,\la_2,\al_0$ satisfying the following conditions:
\begin{equation}\label{param-cs-1}
\max(0,4H_1-3)< \la_1 < \min(4H_1-1,1) \quad , \quad \max(0,4H_2-3) < \la_2 <1 \ ,
\end{equation}
and
\begin{equation}\label{param-cs-2}
2\la_1+\la_2=7+4\al_0 \quad , \quad \al<\al_0<-3+2H_1+H_2 \ .
\end{equation}
Given our assumptions on $(H_1,H_2)$, the existence of such a triplet $(\la_1,\la_2,\al_0)$ is indeed easy to check. Now write
\begin{eqnarray}
\lefteqn{|I^{\ell}_{(s,x)}|}\nonumber\\
&\leq& \bigg(\iint_{\R^2} \frac{d\xi d\eta}{|\xi|^{4H_1-2-\la_1} |\eta|^{4H_2-2-\la_2}}|\fouri{K}(\xi,\eta)|^2 \bigg)^{\frac12} \bigg(\iint_{\R^2} \frac{d\xi d\eta}{|\xi|^{\la_1} |\eta|^{\la_2}}|\fouri{F^{\ell}_{(s,x)}}(2^{-2\ell} \xi,2^{-\ell}\eta)|^2 \bigg)^{\frac12} \nonumber\\
&\leq& 2^{-\ell (2\al+2)} \bigg(\iint_{\R^2} \frac{d\xi d\eta}{|\xi|^{4H_1-2-\la_1} |\eta|^{4H_2-2-\la_2}}|\fouri{K}(\xi,\eta)|^2 \bigg)^{\frac12} \bigg(\iint_{\R^2} \frac{d\xi d\eta}{|\xi|^{\la_1} |\eta|^{\la_2}}|\fouri{F^{\ell}_{(s,x)}}(\xi,\eta)|^2 \bigg)^{\frac12} \ .\label{cs-i-n-l}
\end{eqnarray}
Owing to (\ref{param-cs-1}), we know that $4H_1-2-\la_1<1$ and $4H_2-2-\la_2<1$. Besides, using (\ref{param-cs-1})-(\ref{param-cs-2}), it is easy to check that there exists $a,b\in [0,1)$ such that $a+b<1$ and
$$4H_1-2-\la_1+2a>1 \quad , \quad 4H_2-2-\la_2+4b>1 \ .$$
Thanks to Lemma \ref{lem:estim-k-hat}, we are therefore in a position to guarantee that
\begin{equation}\label{fourier-k-finie}
\iint_{\R^2} \frac{d\xi d\eta}{|\xi|^{4H_1-2-\la_1} |\eta|^{4H_2-2-\la_2}}|\fouri{K}(\xi,\eta)|^2 \ < \ \infty \ .
\end{equation}
As far as the second integral in (\ref{cs-i-n-l}) is concerned, we can use Lemma \ref{lem:sobolev} (more precisely the bound (\ref{sobol-cas-plus-grand})) and then (\ref{estim-x-1-1}) to get that
\begin{eqnarray}
\lefteqn{\iint_{\R^2} \frac{d\xi d\eta}{|\xi|^{\la_1} |\eta|^{\la_2}}|\fouri{F^{\ell}_{(s,x)}}(\xi,\eta)|^2}\nonumber\\
 & \lesssim & \iint_{\R^2} ds  dx \bigg( \iint_{\R^2}dt dy \, \frac{F^{\ell}_{(s,x)}(t,y)}{\lln t-s\rrn^{1-\frac{\la_1}{2}} \lln y-x\rrn^{1-\frac{\la_2}{2}}} \bigg)^2\nonumber\\
 & \lesssim &2^{-2\ell (\al+2)} \lVert X-Y\rVert_{\al;R_{k+1}}^2 \iint_{\R^2} ds  dx \bigg( \iint_{\R^2}dt dy \, \frac{|\psi(t,y)|}{\lln t-s\rrn^{1-\frac{\la_1}{2}} \lln y-x\rrn^{1-\frac{\la_2}{2}}} \bigg)^2\nonumber\\
& \lesssim &2^{-2\ell (\al+2)} \lVert X-Y\rVert_{\al;R_{k+1}}^2 \ .\label{second-ingred}
\end{eqnarray}
Going back to (\ref{cs-i-n-l}), we get the desired estimate, namely
$$|I^{\ell}_{(s,x)}| \lesssim 2^{-\ell (3\al+4)} \lVert X-Y\rVert_{\al;R_{k+1}} \ .$$

\

\noindent
\textbf{Estimation of $II^{n,\ell}_{(s,x)}$.} Observe that for any $\ep>0$, one has
\begin{equation}\label{decomp-ii-n-l}
| II^{n,\ell}_{(s,x)}| \lesssim 2^{-2n\ep} \big\{| \overline{II}^{\ell,1}_{(s,x)}|+| \overline{II}^{\ell,2}_{(s,x)}|\big\} \ ,
\end{equation}
where
$$\overline{II}^{\ell,1}_{(s,x)}:=\iint_{\R^2 } \frac{d\xi d\eta}{|\xi|^{2(H_1-\ep)-1} |\eta|^{2H_2-1}}\Big|\fouri{K}(\xi,\eta)\iint_{\R^2} dt dy \, e^{-\imath \xi(t-s)} e^{-\imath \eta (y-x)}\psi(t,y) X\big(K^{(1)}_{(s,x),(t,y)}\big)\Big|$$
and
$$\overline{II}^{\ell,2}_{(s,x)}:=\iint_{\R^2 } \frac{d\xi d\eta}{|\xi|^{2H_1-1} |\eta|^{2(H_2-\ep)-1}}\Big|\fouri{K}(\xi,\eta)\iint_{\R^2} dt dy \, e^{-\imath \xi(t-s)} e^{-\imath \eta (y-x)}\psi(t,y) X\big(K^{(1)}_{(s,x),(t,y)}\big)\Big| \ .$$
Then the estimation of both $\overline{II}^{\ell,1}_{(s,x)}$ and $\overline{II}^{\ell,2}_{(s,x)}$ can of course be done along the very same lines as the estimation of $I^{\ell}_{(s,x)}$ (by picking $\ep>0$ small enough), which leads us to the bound
$$|\overline{II}^{\ell,i}_{(s,x)}|\lesssim 2^{-\ell (3\al+4)} \lVert X\rVert_{\al;R_{k+1}} \ .$$
The proof of our assertion is therefore complete.

\end{proof}

\

\subsection{Convergence in the first chaos: case of $L^{2,n,\dot{B}^n}$}

\begin{proposition}\label{prop-l-2-x}
For every $\al \in (-\frac32,-3+2H_1+H_2)$, there exists $\ep>0$ such that for all $X,Y \in \cac_c^\al(\R^2)$, $n,\ell\geq 0$, $k\geq 1$, $(s,x)\in R_k$ and $\psi\in \cb$, one has
\begin{equation}\label{estim-l-x-2}
\big|\big\{ L^{2,n,X}_{(s,x)}-L^{2,Y}_{(s,x)}\big\}(\psi_{(s,x)}^\ell) \big| \lesssim 2^{-\ell(3\al+4)} \{2^{-n\ep} \lVert X \rVert_{\al;R_{k+1}}+ \lVert X-Y \rVert_{\al;R_{k+1}} \} \ ,
\end{equation}
where the proportional constant is independent of $(n,k,\ell)$ and $\psi$. As a consequence of (\ref{estim-l-x-2}) and (\ref{estim-mom-x-1}), one has in particular 
\begin{equation}\label{estim-l-x-2-appli}
\mathbb{E} \big[\big|\big\{ L^{2,n,\dot{B}^n}_{(s,x)}-L^{2,\dot{B}}_{(s,x)}\big\}(\psi_{(s,x)}^\ell) \big|^2\big] \lesssim k^2 2^{-2\ell(3\al+4)} 2^{-2n\ep}  \ .
\end{equation}
\end{proposition}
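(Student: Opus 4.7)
The argument is a straightforward adaptation of the proof of Proposition \ref{prop-l-x-1}; the plan is to run the same three steps, pointing out only the modifications forced by the two replacements at play: the kernel $K$ is replaced by $D^{(0,1)}K$ (which inserts an extra factor $|\eta|$ on the Fourier side), and the quantity $\Theta^{(1)}$ is replaced by $\Theta^{(2)}$ (which inserts the extra factor $(y_1-x_1)$ on the test-function side).

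First I would split $\{L^{2,n,X}_{(s,x)}-L^{2,Y}_{(s,x)}\}(\psi^\ell_{(s,x)})$ as a sum of two terms, in full analogy with (\ref{split-i-ii}): a term $\tilde{I}^\ell_{(s,x)}$ driven by $X-Y$ and integrated on the whole of $\R^2$, and a term $\widetilde{II}^{n,\ell}_{(s,x)}$ driven by $X$ alone but integrated on $\R^2\setminus\cd_n$. The very same trick as in (\ref{decomp-ii-n-l}) reduces the latter to the former type at the cost of an extra $2^{-n\ep}$ factor, so the issue boils down to controlling $\tilde{I}^\ell_{(s,x)}$. After the change of variables $(t,y)\mapsto(s+2^{-2\ell}t,x+2^{-\ell}y)$ of (\ref{defi-i-n-l}), the integrand in $\tilde{I}^\ell_{(s,x)}$ reads as $\mathcal{F}(D^{(0,1)}K)(\xi,\eta)\cdot\mathcal{F}(\tilde{F}^\ell_{(s,x)})(2^{-2\ell}\xi,2^{-\ell}\eta)$, where
$$\tilde{F}^\ell_{(s,x)}(t,y):=2^{-\ell}y\cdot\psi(t,y)\cdot\{X-Y\}\big(K^{(1)}_{(s,x),(s+2^{-2\ell}t,x+2^{-\ell}y)}\big) \ ,$$
the prefactor $2^{-\ell}y$ being precisely the rescaled version of the coefficient $(y_1-x_1)$ in the definition of $\Theta^{(2)}$.

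I would then apply Cauchy--Schwarz exactly along the lines of (\ref{cs-i-n-l}). On the test-function side, the Sobolev-type estimate of Lemma \ref{lem:sobolev} combined with (\ref{estim-x-1-1}) gives a control of the $\tilde{F}^\ell$-factor of order $2^{-\ell(\al+3)}\lVert X-Y\rVert_{\al;R_{k+1}}$, one power of $2^{-\ell}$ better than in (\ref{second-ingred}) thanks to the extra $2^{-\ell}y$ term. On the Fourier side, the identity $\mathcal{F}(D^{(0,1)}K)(\xi,\eta)=\imath\eta\cdot\mathcal{F}(K)(\xi,\eta)$ turns the finiteness requirement into
$$\iint_{\R^2}\frac{d\xi d\eta}{|\xi|^{4H_1-2-\la_1}|\eta|^{4H_2-4-\la_2}}|\mathcal{F}(K)(\xi,\eta)|^2 \ < \ \infty \ ,$$
and the correct global scaling is recovered by replacing (\ref{param-cs-2}) with the shifted sum rule $2\la_1+\la_2=5+4\al_0$: the $-2$ offset there is exactly compensated by the $2^{-\ell}$-gain on the $\tilde{F}^\ell$-side, so that the final exponent is again $2^{-\ell(3\al+4)}$.

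The main obstacle is the construction of an admissible triple $(\la_1,\la_2,\al_0)$ meeting both the new sum rule and the convergence requirement for the kernel integral under the sole assumption $2H_1+H_2>\tfrac{3}{2}$. Because the extra $|\eta|^2$ in the integrand makes the condition $a+b<1$ in Lemma \ref{lem:estim-k-hat} more restrictive on $\la_2$, the case $H_2\leq\tfrac{1}{4}$ likely forces an additional dyadic splitting of the $\eta$-integral --- bounding $|\mathcal{F}(D^{(0,1)}K)(\xi,\eta)|$ by $|\eta|\cdot\lVert\mathcal{F}(K)\rVert_{L^\infty}$ in the low-frequency range and by $c_{a,b}|\eta|^{1-2b}|\xi|^{-a}$ in the high-frequency one --- so as to recover integrability on each piece. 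Once (\ref{estim-l-x-2}) is in hand, the bound (\ref{estim-l-x-2-appli}) follows at once by specialization to $(X,Y)=(\dot B^n,\dot B)$ together with the moment estimate (\ref{estim-mom-x-1}).
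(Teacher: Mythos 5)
Your outline reproduces the paper's proof almost verbatim: the same splitting into a term driven by $X-Y$ on all of $\R^2$ and a tail term on $\R^2\setminus\cd_n$ absorbed via the $2^{-n\ep}$ trick of (\ref{decomp-ii-n-l}), the same use of $\fouri{D^{(0,1)}K}(\xi,\eta)=c\,\eta\,\fouri{K}(\xi,\eta)$, the same Cauchy--Schwarz step as in (\ref{cs-i-n-l}) with the shifted sum rule $2\la_1+\la_2=5+4\al_0$, and the same exponent bookkeeping leading to $2^{-\ell(3\al+4)}$ (the paper keeps the factor $2^{-\ell}$ outside its function $G^\ell_{(s,x)}$ rather than inside your $\tilde F^\ell_{(s,x)}$, which is purely cosmetic). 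The one point you leave open is precisely where you overcomplicate matters: no dyadic splitting of the $\eta$-integral and no case distinction on $H_2$ is needed. The paper's resolution is to let $\la_2$ be \emph{negative}, namely $-1<\la_2<\min(0,4H_2-1)$ together with $\max(0,4H_1-3)<\la_1<\min(4H_1-1,1)$ as in (\ref{param-cs-1-bis})--(\ref{param-cs-2-bis}). The extra power of $|\eta|$ produced by the derivative of the kernel is thus traded against a \emph{growing} weight $|\eta|^{-\la_2}=|\eta|^{|\la_2|}$ on the test-function side of the Cauchy--Schwarz inequality; the Sobolev-type bound (\ref{sobolev-1}) still applies there because the spatial H\"older exponent $\al+2$ of $G^\ell_{(s,x)}$ satisfies $\la_2>-1>-2(\al+2)$, which is exactly the remark preceding (\ref{second-ingre-bis}). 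With this choice, the existence of $a,b\in[0,1)$ with $a+b<1$, $4H_1-2-\la_1+2a>1$ and $4H_2-4-\la_2+4b>1$ (so that Lemma \ref{lem:estim-k-hat} yields the finiteness of the kernel integral) reduces, after eliminating $a$ and $b$, to the single inequality $2\la_1+\la_2<-7+8H_1+4H_2$, i.e.\ to $\al_0<-3+2H_1+H_2$ --- which holds by construction, uniformly in $H_2$, including $H_2\leq\frac14$. Apart from this, your argument is the paper's argument, and the deduction of (\ref{estim-l-x-2-appli}) from (\ref{estim-l-x-2}) and (\ref{estim-mom-x-1}) is as you state.
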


\begin{proof} 
It is based on the same strategy as the proof of Proposition \ref{prop-l-x-1}, that is an appropriate Cauchy-Schwartz argument. As above, let us start with the inequality
\begin{equation}\label{split-i-ii-bis}
\big|\big\{ L^{2,n,X}_{(s,x)}-L^{2,Y}_{(s,x)}\big\}(\psi_{(s,x)}^\ell) \big| \lesssim |I^{\ell}_{(s,x)}|+|II^{n,\ell}_{(s,x)}| \ ,
\end{equation}
with
\begin{multline*}
I^{\ell}_{(s,x)}:=\iint_{\R^2} \frac{d\xi d\eta}{|\xi|^{2H_1-1} |\eta|^{2H_2-2}}|\fouri{K}(\xi,\eta)|\cdot \\
 \bigg|\iint_{\R^2} dt dy \, e^{-\imath \xi(t-s)} e^{-\imath \eta (y-x)}(y-x)\psi^\ell_{(s,x)}(t,y) \{X-Y\}\big(K^{(1)}_{(s,x),(t,y)}\big)\bigg| 
\end{multline*}
and
\begin{multline*}
II^{n,\ell}_{(s,x)}:=\iint_{\R^2\backslash \cd_n} \frac{d\xi d\eta}{|\xi|^{2H_1-1} |\eta|^{2H_2-2}}|\fouri{K}(\xi,\eta)|\cdot \\
 \bigg|\iint_{\R^2} dt dy \, e^{-\imath \xi(t-s)} e^{-\imath \eta (y-x)}(y-x)\psi^\ell_{(s,x)}(t,y) X\big(K^{(1)}_{(s,x),(t,y)}\big)\bigg| 
\end{multline*}
We have here used the basic identity $(\fouri{D^{(0,1)}K})(\xi,\eta)=c \, \eta \, \fouri{K}(\xi,\eta)$.

\

\noindent
\textbf{Estimation of $I^{\ell}_{(s,x)}$.} Just as in the proof of Proposition \ref{prop-l-x-1}, write $I^{\ell}_{(s,x)}$ as
\begin{equation}\label{i-n-l-bis}
I^{\ell}_{(s,x)} =2^{-\ell}\iint_{\R^2} \frac{d\xi d\eta}{|\xi|^{2H_1-1} |\eta|^{2H_2-2}}\fouri{K}(\xi,\eta)\, \fouri{G^{\ell}_{(s,x)}}(2^{-2\ell} \xi,2^{-\ell}\eta)
\end{equation}
with
\begin{equation}\label{g-n-l}
G^{\ell}_{(s,x)}(t,y):=\psi(t,y)\cdot y\cdot \big\{X-Y\big\}\big(K^{(1)}_{(s,x),(s+2^{-2\ell}t,x+2^{-\ell}y)}\big) \ .
\end{equation}
Then note that given our assumptions on $(H_1,H_2)$, we can easily find $\la_1,\la_2,\al_0$ satisfying the following conditions:
\begin{equation}\label{param-cs-1-bis}
\max(0,4H_1-3)< \la_1 < \min(4H_1-1,1) \quad , \quad -1< \la_2 <\min(0,4H_2-1) \ ,
\end{equation}
\begin{equation}\label{param-cs-2-bis}
2\la_1+\la_2=5+4\al_0 \quad , \quad \al<\al_0<-3+2H_1+H_2 \ ,
\end{equation}
and write
\begin{eqnarray}
\lefteqn{|I^{\ell}_{(s,x)}|}\nonumber\\
&\leq& 2^{-\ell}\bigg(\iint_{\R^2} \frac{d\xi d\eta}{|\xi|^{4H_1-2-\la_1} |\eta|^{4H_2-4-\la_2}}|\fouri{K}(\xi,\eta)|^2 \bigg)^{\frac12} \bigg(\iint_{\R^2} \frac{d\xi d\eta}{|\xi|^{\la_1} |\eta|^{\la_2}}|\fouri{G^{\ell}_{(s,x)}}(2^{-2\ell} \xi,2^{-\ell}\eta)|^2 \bigg)^{\frac12} \nonumber\\
&\leq& 2^{-\ell (2\al+2)} \bigg(\iint_{\R^2} \frac{d\xi d\eta}{|\xi|^{4H_1-2-\la_1} |\eta|^{4H_2-4-\la_2}}|\fouri{K}(\xi,\eta)|^2 \bigg)^{\frac12} \bigg(\iint_{\R^2} \frac{d\xi d\eta}{|\xi|^{\la_1} |\eta|^{\la_2}}|\fouri{G^{\ell}_{(s,x)}}(\xi,\eta)|^2 \bigg)^{\frac12} \ .\label{cs-i-n-l-bis}
\end{eqnarray}
At this point, observe that $4H_1-2-\la_1<1$ and $4H_2-4-\la_2<1$. Moreover, with (\ref{param-cs-1-bis})-(\ref{param-cs-2-bis}) in mind, it is easy to exhibit $a,b\in [0,1)$ such that $a+b<1$ and
$$4H_1-2-\la_1+2a>1 \quad , \quad 4H_2-4-\la_2+4b>1 \ .$$
Consequently, we can apply Lemma \ref{lem:estim-k-hat} and assert that
$$\iint_{\R^2} \frac{d\xi d\eta}{|\xi|^{4H_1-2-\la_1} |\eta|^{4H_2-4-\la_2}}|\fouri{K}(\xi,\eta)|^2 \ < \ \infty \ .$$
As for the second integral in (\ref{cs-i-n-l-bis}), we have, by Lemma \ref{lem:sobolev} (and more precisely by (\ref{sobolev-1})),
\begin{eqnarray*}
\iint_{\R^2} \frac{d\xi d\eta}{|\xi|^{\la_1} |\eta|^{\la_2}}|\fouri{G^{\ell}_{(s,x)}}(\xi,\eta)|^2
&\lesssim &\int_{\R} dt \int_{\R} dy \bigg( \int_{\R} du \, \frac{G^{\ell}_{(s,x)}(u,y)}{|t-u|^{1 -\frac{\la_1}{2}}} \bigg)^2\\
&&+\iint_{\R^2} \frac{dy dz}{|y-z|^{1-\la_2}}\int_{\R}dt \bigg( \int_{\R} du \, \frac{G^{\ell}_{(s,x)}(u,y)-G^{\ell}_{(s,x)}(u,z)}{|t-u|^{1 -\frac{\la_1}{2}}} \bigg)^2 \ .
\end{eqnarray*}
Using (\ref{estim-x-1-1}), we can here rely on the following bounds:
$$\big|G^{\ell}_{(s,x)}(u,y)\big| \lesssim 2^{-\ell (\al+2)} \lVert X-Y \rVert_{\al;R_{k+1}}\cdot \1_{\{|u|\leq 1,|y|\leq 1\}} $$
and
$$\big| G^{\ell}_{(s,x)}(u,y)-G^{\ell}_{(s,x)}(u,z) \big|\lesssim  2^{-\ell (\al+2)} \lVert X-Y \rVert_{\al;R_{k+1}}\, \1_{\{ |u|\leq 1\}} \cdot \big[ \1_{\{ |y|\leq 1,|z|\leq 1\}} \cdot |y-z|^{\al+2}+\1_{\{ |y|\leq 1,|z|\geq 1\}}+\1_{\{ |y|\geq 1,|z|\leq 1\}} \big]  \ .$$
Combined with fact that $\la_2>-1>-2(\al+2)$, these estimates allow us to conclude that
\begin{equation}\label{second-ingre-bis}
\iint_{\R^2} \frac{d\xi d\eta}{|\xi|^{\la_1} |\eta|^{\la_2}}|\fouri{G^{\ell}_{(s,x)}}(\xi,\eta)|^2 \lesssim 2^{-2\ell (\al+2)} \lVert X-Y \rVert_{\al;R_{k+1}}^2 \ .
\end{equation}
Going back to (\ref{cs-i-n-l-bis}), we have thus proved the desired estimate, namely
$$|I^{\ell}_{(s,x)}| \lesssim 2^{-\ell (3\al+4)} \lVert X-Y \rVert_{\al;R_{k+1}} \ .$$
\

\noindent
\textbf{Estimation of $II^{n,\ell}_{(s,x)}$.} The argument is the same as in the proof of Proposition \ref{prop-l-x-1}: we first bound $|II^{n,\ell}_{(s,x)}|$ in a similar way as in (\ref{decomp-ii-n-l}) and then use the same Cauchy-Schwartz bound (\ref{cs-i-n-l-bis}) (by replacing $H_i$ with $H_i-\ep$, for $\ep>0$ small enough, and $X-Y$ with $X$). This leads us to the expected estimate, namely
$$|II^{n,\ell}_{(s,x)}| \lesssim 2^{-\ell (3\al+4)}2^{-n\ep} \lVert X \rVert_{\al;R_{k+1}} \ .$$

\end{proof}

\

\subsection{Convergence in the first chaos: case of $L^{3,n,\dot{B}^n}$}

\begin{proposition}
For every $\al \in (-\frac32,-3+2H_1+H_2)$, there exists $\ep>0$ such that for all $X,Y \in \cac_c^\al(\R^2)$, $n,\ell\geq 0$, $k \geq 1$, $(s,x)\in R_k$ and $\psi\in \cb$, one has
\begin{equation}\label{estim-l-x-3}
\big|\big\{ L^{3,n,X}_{(s,x)}-L^{3,Y}_{(s,x)}\big\}(\psi_{(s,x)}^\ell) \big| \lesssim 2^{-\ell(3\al+4)} \{2^{-n\ep} \lVert X \rVert_{\al;R_{k+1}}+ \lVert X-Y \rVert_{\al;R_{k+1}} \} \ ,
\end{equation}
where the proportional constant is independent of $(n,k,\ell)$ and $\psi$. As a consequence of (\ref{estim-l-x-3}) and (\ref{estim-mom-x-1}), one has in particular 
\begin{equation}\label{estim-l-x-3-appli}
\mathbb{E} \big[\big|\big\{ L^{3,n,\dot{B}^n}_{(s,x)}-L^{3,\dot{B}}_{(s,x)}\big\}(\psi_{(s,x)}^\ell) \big|^2\big] \lesssim k^2 2^{-2\ell(3\al+4)} 2^{-2n\ep}  \ .
\end{equation}
\end{proposition}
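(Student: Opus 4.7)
The plan is to mirror the Cauchy--Schwarz scheme used in the proofs of Propositions~\ref{prop-l-x-1} and~\ref{prop-l-2-x}, adapted to the operator $L^{3,n,X}$ whose main novelty is that no explicit factor $\fouri{K}$ appears in its definition -- the regularizing action of the heat kernel is now hidden inside the defining integral of $\Theta^{(3)}$. As a first step I would split
$$|\{L^{3,n,X}_{(s,x)}-L^{3,Y}_{(s,x)}\}(\psi_{(s,x)}^\ell)| \;\lesssim\; |I^\ell_{(s,x)}| + |II^{n,\ell}_{(s,x)}|,$$
where $I^\ell_{(s,x)}$ is the full $\R^2$-integral with $X$ replaced by $X-Y$, and $II^{n,\ell}_{(s,x)}$ is the integral over $\R^2\setminus\cd_n$ with $X$ alone, and then extract the expected prefactor $2^{-\ell(2\al+2)}$ from the rescaling $(\xi,\eta)\mapsto(2^{2\ell}\xi,2^\ell\eta)$, exactly as in (\ref{cs-i-n-l}) and (\ref{cs-i-n-l-bis}).

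To estimate $I^\ell_{(s,x)}$, I would rewrite $\Theta^{(3)}$ via the change of variable $(p,q):=(t+u,y+z)$ and take the $(t,y)$-Fourier transform: this exposes a hidden factor $\fouri{K}(\xi,\eta)$ coming from the inner Fourier computation of $K^{(2)}_{(s,x),(p,q)}(p-t,q-y)$, much as in the derivation performed in the proof of identity (\ref{terme-simpl-trois-un}). Once this factor is made explicit, I would split the singular weight $|\xi|^{2H_1-1}|\eta|^{2H_2-1}$ via Cauchy--Schwarz with exponents $(\la_1,\la_2,\al_0)$ chosen as in (\ref{param-cs-1})--(\ref{param-cs-2}), use Lemma~\ref{lem:estim-k-hat} to absorb the resulting weighted $L^2$-norm of $\fouri{K}$ into a finite constant, and control the remaining weighted $L^2$-norm of the Fourier transform of the $\psi$-factor via the appropriate case of Lemma~\ref{lem:sobolev}. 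The pointwise input for this Sobolev estimate would come from bounds on $\Theta^{(3)}_{(s,x)}(\psi_{(s,x)}^\ell,X-Y)(t,y)$ and its $y$-increments, of the same shape as (\ref{p-n-1})--(\ref{p-n-2}), obtained by combining the regularity estimate (\ref{estim-x-1-1}) for $(X-Y)(K^{(1)}_{(p,q),\cdot})$ with the second-order heat-kernel estimates of Appendix~\ref{append} applied to the Taylor-remainder $K^{(2)}_{(s,x),(p,q)}(\cdot)$. After normalization, this yields an $L^2$-contribution of size $2^{-\ell(\al+2)}\lVert X-Y\rVert_{\al;R_{k+1}}$, which combined with the prefactor $2^{-\ell(2\al+2)}$ delivers the target rate $2^{-\ell(3\al+4)}\lVert X-Y\rVert_{\al;R_{k+1}}$.

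The tail term $II^{n,\ell}_{(s,x)}$ would be handled as in (\ref{decomp-ii-n-l}) by replacing $(H_1,H_2)$ successively by $(H_1-\ep,H_2)$ and $(H_1,H_2-\ep)$ for $\ep>0$ small enough, producing the $2^{-n\ep}$ gain from the exclusion of $\cd_n$, and then applying the previous Cauchy--Schwarz/Sobolev argument with $X-Y$ replaced by $X$. The main obstacle will be the precise bookkeeping of the Sobolev-norm estimation for $\fouri{\Theta^{(3)}}$: because each of $H_1$ and $H_2$ may lie above or below $\frac12$, the relevant case of Lemma~\ref{lem:sobolev} must be treated separately, and in each regime one must verify that the interplay between the $(p,q)$-integration against $\psi$ and the outer $(\xi,\eta)$-fractional weights absorbs the now-explicit $\fouri{K}$ factor without creating new divergences. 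Once this case analysis is carried out, the remaining computation runs parallel to the proofs of Propositions~\ref{prop-l-x-1} and~\ref{prop-l-2-x}, and~(\ref{estim-l-x-3-appli}) follows by applying~(\ref{estim-l-x-3}) to $X=\dot B^n$ and $Y=\dot B$ together with (\ref{estim-mom-x-1}).
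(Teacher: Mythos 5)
There is a genuine gap at the heart of your argument: the claim that, after the change of variables, the inner Fourier computation ``exposes a hidden factor $\fouri{K}(\xi,\eta)$'' is not correct. In $\Theta^{(3)}_{(s,x)}(\psi,X)$ the noise factor is $X\big(K^{(1)}_{(t,y),(u,z)}\big)$, which depends on the \emph{inner} integration variables $(u,z)$, so the inner integral
$\iint du\,dz\, e^{-\imath\xi(t-u)}e^{-\imath\eta(y-z)}\,X\big(K^{(1)}_{(t,y),(u,z)}\big)\,K^{(2)}_{(s,x),(t,y)}(u,z)$
is a convolution-type coupling and does not factor as $\fouri{K}(\xi,\eta)$ times a function of $(t,y)$. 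Indeed, in the derivation of (\ref{terme-simpl-trois-un}) the term that did yield an explicit $\fouri{K}$ was the one where the noise was evaluated at the outer point $(t,y)$ and could be pulled out of the $(u,z)$-integral; the remaining term, where the noise stays inside, is precisely $L^{3}$, which is why it requires a separate treatment. Since the weight $|\xi|^{1-2H_1}|\eta|^{1-2H_2}$ is not integrable at infinity on its own, and since without the explicit $\fouri{K}$ you cannot invoke Lemma \ref{lem:estim-k-hat}, the Cauchy--Schwarz/Sobolev scheme of Propositions \ref{prop-l-x-1} and \ref{prop-l-2-x} does not transfer as you describe: the decay in $(\xi,\eta)$ has to be extracted from the inner integral itself, and a pointwise bound of the type (\ref{p-n-1})--(\ref{p-n-2}) on $\Theta^{(3)}$ is not enough for that.

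The paper's proof is correspondingly much more involved. After rescaling, it decomposes $K^{(2)}_{(s,x),(t,y)}$ through the dyadic expansion (\ref{decompo-k}), splits the scales into $m\leq \ell$ and $m>\ell$ (the pieces $A^{\ell}_{(s,x)}$ and $B^{\ell}_{(s,x)}$ in (\ref{decompo-itilde})), uses Taylor expansions of $K_0$ at the small scales, and obtains the needed decay in $\xi$ and $\eta$ by repeated integration by parts (the bounds of Lemmas \ref{lem:a-1-n} and \ref{lem:n-n-l}, e.g.\ the $\inf\big(2^{-3m},2^{m}\xi^{-2},2^{m}\eta^{-4}\big)$ estimate), before summing over $m$ using both $2H_1+H_2>\frac32$ and $2H_1+H_2\leq 2$ in the choice of auxiliary exponents. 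Only the sub-pieces $B^{2,2,\ell}$ and $B^{3,2,\ell}$, where a clean factorization $\fouri{K}\cdot\fouri{F^{\ell}_{(s,x)}}$ or $\fouri{(D^{0,1}K)}\cdot\fouri{G^{\ell}_{(s,x)}}$ genuinely appears, are handled by the Cauchy--Schwarz argument you propose. Your treatment of the tail $II^{n,\ell}_{(s,x)}$ (replacing $H_i$ by $H_i-\ep$ to gain $2^{-n\ep}$) and the final deduction of (\ref{estim-l-x-3-appli}) from (\ref{estim-l-x-3}) and (\ref{estim-mom-x-1}) are fine, but the core estimate of $I^{\ell}_{(s,x)}$ is missing as written.
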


\begin{proof}
Let us set $X^\Delta:=X-Y$. One has
$$\big\{ L^{3,n,X}_{(s,x)}-L^{3,Y}_{(s,x)}\big\}(\psi_{(s,x)}^\ell)=c_2 \,  \big\{I^{\ell}_{(s,x)}+II^{n,\ell}_{(s,x)}\big\} \ ,$$
with
\begin{multline*}
I^{\ell}_{(s,x)}: =\iint_{\R^2} \frac{d\xi d\eta}{|\xi|^{2H_1-1} |\eta|^{2H_2-1}}\cdot\\
\iint_{\R^2} dt dy \, \psi_{(s,x)}^\ell(t,y) \iint_{\R^2} du dz \, e^{-\imath \xi(t-u)} e^{-\imath \eta (y-z)}  X^\Delta\big( K^{(1)}_{(t,y),(u,z)}\big) K^{(2)}_{(s,x),(t,y)}(u,z) \ ,
\end{multline*}
\begin{multline*}
II^{n,\ell}_{(s,x)}:= \iint_{\R^2 \backslash \cd_n} \frac{d\xi d\eta}{|\xi|^{2H_1-1} |\eta|^{2H_2-1}}\cdot\\
\iint_{\R^2} dt dy \, \psi_{(s,x)}^\ell(t,y) \iint_{\R^2} du dz \, e^{-\imath \xi(t-u)} e^{-\imath \eta (y-z)}  X\big( K^{(1)}_{(t,y),(u,z)}\big) K^{(2)}_{(s,x),(t,y)}(u,z) \ .
\end{multline*}
Just as in the proofs of Proposition \ref{prop-l-x-1} and Proposition \ref{prop-l-2-x}, it is easy to see that the estimations of $I^{\ell}_{(s,x)}$ and $II^{n,\ell}_{(s,x)}$ can be done along the very same steps (by relying on a similar bound as in (\ref{split-i-ii}) or (\ref{split-i-ii-bis})), and therefore we only focus on the estimation of $I^{\ell}_{(s,x)}$.

\smallskip

\noindent
First, using elementary changes of variables, let us write the latter expression as 
\begin{equation}\label{i-itilde}
I^{\ell}_{(s,x)}=2^{-\ell (4H_1+2H_2-3)} \tilde{I}^{\ell}_{(s,x)} \ ,
\end{equation}
where
\begin{multline}\label{i-tilde}
\tilde{I}^{\ell}_{(s,x)}: =\iint_{\R^2} \frac{d\xi d\eta}{|\xi|^{2H_1-1} |\eta|^{2H_2-1}}\iint_{\R^2} du dz \, e^{-\imath \xi u} e^{-\imath \eta z}\iint_{\R^2} dt dy \, \psi(t,y)\\
X^\Delta\big( K^{(1)}_{(s+2^{-2\ell}t,x+2^{-\ell}y),(s+2^{-2\ell}(t-u),x+2^{-\ell}(y-z))}\big) K^{(2)}_{(0,0),(2^{-2\ell}t,2^{-\ell}y)}(2^{-2\ell}(t-u),2^{-\ell}(y-z)) \ .
\end{multline}
Now, with expansion (\ref{decompo-k}) in mind, let us decompose $K^{(2)}_{(0,0),(2^{-2\ell}t,2^{-\ell}y)}(2^{-2\ell}(t-u),2^{-\ell}(y-z))$ as
\begin{multline}\label{expansion-k-2}
K^{(2)}_{(0,0),(2^{-2\ell}t,2^{-\ell}y)}(2^{-2\ell}(t-u),2^{-\ell}(y-z))\\
=\sum_{0\leq m< \ell} K^{(2),m}_{(0,0),(2^{-2\ell}t,2^{-\ell}y)}(2^{-2\ell}(t-u),2^{-\ell}(y-z)) + \sum_{m\geq \ell} K^{(2),m}_{(0,0),(2^{-2\ell}t,2^{-\ell}y)}(2^{-2\ell}(t-u),2^{-\ell}(y-z)) \ ,
\end{multline}
where 
\begin{multline*}
K^{(2),m}_{(0,0),(2^{-2\ell}t,2^{-\ell}y)}(2^{-2\ell}(t-u),2^{-\ell}(y-z)):=2^m K_0(2^{2(m-\ell)} u,2^{m-\ell} z)\\
-2^m K_0(2^{2(m-\ell)} (u-t),2^{m-\ell}(z-y))-2^{-\ell}\, y\, 2^{2m} (D^{0,1}K_0)(2^{2(m-\ell)} (u-t),2^{m-\ell}(z-y)) \ .
\end{multline*}
Injecting (\ref{expansion-k-2}) back into (\ref{i-tilde}) gives a decomposition we label as follows:
\begin{equation}\label{decompo-itilde}
\tilde{I}^{\ell}_{(s,x)} =\sum_{0\leq m< \ell} \tilde{I}^{\ell,m}_{(s,x)}+\sum_{m\geq \ell} \tilde{I}^{\ell,m}_{(s,x)} =:A^{\ell}_{(s,x)}+B^{\ell}_{(s,x)} \ .
\end{equation}

\

\noindent
\textbf{Estimation of $A^{\ell}_{(s,x)}$.} Let us here rely on the basic Taylor expansion
\begin{eqnarray*}
\lefteqn{\sum_{0\leq m< \ell} K^{(2),m}_{(0,0),(2^{-2\ell}t,2^{-\ell}y)}(2^{-2\ell}(t-u),2^{-\ell}(y-z))}\\
&=&2^\ell \sum_{0< m\leq \ell} 2^{-3m} t \int_0^1 dr \, (D^{0,1}K_0)(2^{-2m}(u-t+rt),2^{-m}z) \\
& & +2^\ell \sum_{0< m\leq \ell} 2^{-3m} y^2\int_0^1 dr \, r\int_0^1 dr' \, (D^{0,2}K_0)(2^{-2m}(u-t),2^{-m}(z-y+rr'y)) \ ,
\end{eqnarray*}
which gives rise to the decomposition $A^{\ell}_{(s,x)}=A^{1,\ell}_{(s,x)}+A^{2,\ell}_{(s,x)}$, where
\begin{multline}\label{a-1-n-l-1}
A^{1,\ell}_{(s,x)}:=2^\ell \sum_{0< m\leq \ell} 2^{-3m} \int_{\R^2}\frac{d\xi d\eta}{|\xi|^{2H_1-1} |\eta|^{2H_2-1}} \int_{\R^2} du dz \, e^{-\imath \xi u}e^{-\imath \eta z} \int_{\R^2} dt dy \, t \, \psi(t,y)\\
 X^\Delta\big(K^{(1)}_{(s+2^{-2\ell}t,x+2^{-\ell} y),(s+2^{-2\ell}(t-u),x+2^{-\ell} (y-z))} \big)\int_0^1 dr  \, (D^{0,1}K_0)(2^{-2m}(u-t+rt),2^{-m}z)
\end{multline}
and
\begin{multline*}
A^{2,\ell}_{(s,x)}:=2^\ell \sum_{0< m\leq \ell} 2^{-3m} \int_{\R^2}\frac{d\xi d\eta}{|\xi|^{2H_1-1} |\eta|^{2H_2-1}} \int_{\R^2} du dz \, e^{-\imath \xi u}e^{-\imath \eta z} \int_{\R^2} dt dy \, y^2 \, \psi(t,y)\\
 X^\Delta\big(K^{(1)}_{(s+2^{-2\ell}t,x+2^{-\ell} y),(s+2^{-2\ell}(t-u),x+2^{-\ell} (y-z))} \big)\int_0^1 dr \, r\int_0^1 dr' \, (D^{0,2}K_0)(2^{-2m}(u-t),2^{-m}(z-y+rr'y)) \ .
\end{multline*}
Let us first write $A^{1,\ell}_{(s,x)}$ as
\begin{equation}\label{a-1-n-l-2}
A^{1,\ell}_{(s,x)} =2^\ell \sum_{0< m\leq \ell} \int_{\R^2}\frac{d\xi d\eta}{|\xi|^{2H_1-1} |\eta|^{2H_2-1}}A^{1,\ell,m}_{(s,x)} (\xi,\eta) \ .
\end{equation}
For the sake of clarity, we have postponed the estimation of $|A^{1,\ell,m}_{(s,x)} (\xi,\eta)|$ to the subsequent Lemma \ref{lem:a-1-n}. Now, for some small fixed $\ep>0$, pick $a\in (0,2-2H_1)$, $b\in (0,2-2H_2)$ such that $a+b=1-\ep$. Note that this is made possible by the condition $2H_1+H_2\leq 2$, which entails that $(2-2H_1)+(2-2H_2) >1$. Using the bounds (\ref{bound-a-1-n-l-1})-(\ref{bound-a-1-n-l-2}), we derive that
\begin{eqnarray*}
\lefteqn{\big| A^{1,\ell}_{(s,x)} \big|\ \lesssim \ 2^{-\ell (\al+1)} \lVert X^\Delta\rVert_{\al;R_{k+1}}}\\
& & \sum_{0< m \leq \ell} \bigg\{ (2^{m(\al+2)})^\ep \int_{|\xi|\leq 1,|\eta|\leq 1}\frac{d\xi d\eta}{|\xi|^{2H_1-1+a} |\eta|^{2H_2-1+b}} +\int_{|\xi|\leq 1,|\eta|\geq 1}\frac{d\xi d\eta}{|\xi|^{2H_1-1} |\eta|^{2H_2+1}} \\
& &\hspace{2cm}+\int_{|\xi|\geq 1,|\eta|\leq 1}\frac{d\xi d\eta}{|\xi|^{2H_1+1} |\eta|^{2H_2-1}}+\int_{|\xi|\geq 1,|\eta|\geq 1}\frac{d\xi d\eta}{|\xi|^{2H_1+1} |\eta|^{2H_2+1}}  \bigg\}\\
&\lesssim & 2^{-\ell (\al+1)} \lVert X^\Delta\rVert_{\al;R_{k+1}}\, 2^{\ell (\al+2)\ep} \ .
\end{eqnarray*}
It is easy to see that the very same arguments apply to $A^{2,\ell}_{(s,x)}$ as well, leading us to the conclusion that
\begin{equation}\label{esti-a-n-l}
\big| A^{\ell}_{(s,x)} \big| \lesssim 2^{-\ell (\al+1)} \lVert X^\Delta\rVert_{\al;R_{k+1}}\, 2^{\ell (\al+2)\ep} \ .
\end{equation}

\

\noindent
\textbf{Estimation of $B^{\ell}_{(s,x)}$.} Decompose $B^{\ell}_{(s,x)}$ as $B^{\ell}_{(s,x)}=B^{1,\ell}_{(s,x)}-B^{2,\ell}_{(s,x)}-B^{3,\ell}_{(s,x)}$, with
\begin{multline*}
B^{1,\ell}_{(s,x)}:=\sum_{m\geq \ell} 2^m \iint_{\R^2} \frac{d\xi d\eta}{|\xi|^{2H_1-1} |\eta|^{2H_2-1}}\iint_{\R^2} du dz \, e^{-\imath \xi u} e^{-\imath \eta z}\iint_{\R^2} dt dy \, \psi(t,y)\\
X^\Delta\big( K^{(1)}_{(s+2^{-2\ell}t,x+2^{-\ell}y),(s+2^{-2\ell}(t-u),x+2^{-\ell}(y-z))}\big) K_0(2^{2(m-\ell)} u,2^{m-\ell} z) \ ,
\end{multline*}
\begin{multline*}
B^{2,\ell}_{(s,x)}:=\sum_{m\geq \ell} 2^m \iint_{\R^2} \frac{d\xi d\eta}{|\xi|^{2H_1-1} |\eta|^{2H_2-1}}\iint_{\R^2} du dz \, e^{-\imath \xi u} e^{-\imath \eta z}\iint_{\R^2} dt dy \, \psi(t,y)\\
X^\Delta\big( K^{(1)}_{(s+2^{-2\ell}t,x+2^{-\ell}y),(s+2^{-2\ell}(t-u),x+2^{-\ell}(y-z))}\big) K_0(2^{2(m-\ell)} (u-t),2^{m-\ell}(z-y)) \ ,
\end{multline*}
\begin{multline*}
B^{3,\ell}_{(s,x)}:=2^{-\ell}\sum_{m\geq \ell} 2^{2m} \iint_{\R^2} \frac{d\xi d\eta}{|\xi|^{2H_1-1} |\eta|^{2H_2-1}}\iint_{\R^2} du dz \, e^{-\imath \xi u} e^{-\imath \eta z}\iint_{\R^2} dt dy \, \psi(t,y)\\
X^\Delta\big( K^{(1)}_{(s+2^{-2\ell}t,x+2^{-\ell}y),(s+2^{-2\ell}(t-u),x+2^{-\ell}(y-z))}\big)\cdot y\cdot (D^{0,1}K_0)(2^{2(m-\ell)} (u-t),2^{m-\ell}(z-y)) \ .
\end{multline*}

\

\noindent
(i) \textit{Estimation of $B^{1,\ell}_{(s,x)}$.} One has
\begin{equation}\label{b-1-n-l}
B^{1,\ell}_{(s,x)}=2^\ell \sum_{m\geq 0} \iint_{\R^2} \frac{d\xi d\eta}{|\xi|^{2H_1-1} |\eta|^{2H_2-1}} B^{1,\ell,m}_{(s,x)}(\xi,\eta) \ ,
\end{equation}
where
$$B^{1,\ell,m}_{(s,x)}(\xi,\eta):=2^m \iint_{\R^2} du dz\, e^{-\imath \xi u} e^{-\imath \eta z} K_0(2^{2m}u,2^mz) N^{\ell}_{(s,x)}(u,z) \ ,$$
with
\begin{equation}\label{n-l-l}
N^{\ell}_{(s,x)}(u,z):=\iint_{\R^2} dt dy \, \psi(t,y)
X^\Delta\big( K^{(1)}_{(s+2^{-2\ell}t,x+2^{-\ell}y),(s+2^{-2\ell}(t-u),x+2^{-\ell}(y-z))}\big) \ .
\end{equation}
Using the subsequent Lemma \ref{lem:n-n-l}, it is easy to derive the following bound:
\begin{equation}\label{bound-b-1-n-l-k}
\big| B^{1,\ell,m}_{(s,x)}(\xi,\eta) \big| \lesssim 2^{-\ell (\al+2)} \lVert X^\Delta\rVert_{\al;R_{k+1}}\cdot \inf \bigg( 2^{-3m},\frac{2^m}{\xi^2},\frac{2^m}{\eta^4} \bigg) \ ,
\end{equation}
where the proportional constant is independent from $(\xi,\eta)$ and $(\ell,m)$. Observe indeed that
\begin{eqnarray*}
\big| B^{1,\ell,m}_{(s,x)}(\xi,\eta) \big| &\lesssim & 2^{-2m} \int_{\R^2} du dz \, |K_0(u,z)| \, |N_{(s,x)}^\ell(2^{-2m}u,2^{-m}z)| \\
&\lesssim & 2^{-3m} 2^{-\ell (\al+2)} \lVert X^\Delta\rVert_{\al;R_{k+1}}\int_{\R^2} du dz \, |K_0(u,z)| \{|u|+|z|\} \ ,
\end{eqnarray*}
while, for instance,
\begin{eqnarray*}
\big| B^{1,\ell,m}_{(s,x)}(\xi,\eta) \big| &\lesssim & \frac{2^m}{\xi^2} \int_{\R^2} du dz \, \big| \partial_u^2 \big( K_0(2^{2m}.,2^m.) N^{\ell}_{(s,x)}(.,.)\big)(u,z) \big|\\
&\lesssim& \frac{2^m}{\xi^2}2^{-\ell (\al+2)} \lVert X^\Delta\rVert_{\al;R_{k+1}}  \bigg\{ 2^{4m} \int_{\R^2} du dz \, \big| (D^{2,0} K_0)(2^{2m}u,2^m z)\big| \big\{|u|+|z|\big\}\\
& &+ 2^{2m} \int_{\R^2} du dz \, \big| (D^{1,0} K_0)(2^{2m}u,2^m z)\big|+ \int_{\R^2} du dz \, \big| K_0(2^{2m}u,2^m z)\big| \bigg\}\\
&\lesssim &\frac{2^m}{\xi^2}2^{-\ell (\al+2)} \lVert X^\Delta\rVert_{\al;R_{k+1}}  \ .
\end{eqnarray*}
Going back to (\ref{b-1-n-l}), the estimate (\ref{bound-b-1-n-l-k}) allows us to assert that
\begin{equation}\label{estim-b-1}
\big| B^{1,\ell}_{(s,x)} \big| \lesssim 2^{-\ell (\al+1)} \lVert X^\Delta\rVert_{\al;R_{k+1}} \ . 
\end{equation}
Indeed, observe that we can pick $a\in (1-H_1,\frac34)$, $b\in (\frac12 (1-H_2),\frac34)$ such that $a+b < \frac34$ (due to $2H_1+H_2 > \frac32$). Then write
\begin{eqnarray*}
\lefteqn{\big| B^{1,\ell}_{(s,x)} \big| \ \lesssim\ 2^{-\ell (\al+1)} \lVert X^\Delta\rVert_{\al;R_{k+1}} \cdot}\\
& & \sum_{m\geq 0} \bigg\{ 2^{-3m} \int_{|\xi| \leq 1,|\eta| \leq 1} \frac{d\xi d\eta}{|\xi|^{2H_1-1} |\eta|^{2H_2-1}}+2^{-m(3-4a)} \int_{|\xi|\geq 1,|\eta| \leq 1} \frac{d\xi d\eta}{|\xi|^{2H_1-1+2a}|\eta|^{2H_2-1}}\\
& &+2^{-m(3-4b)} \int_{|\xi|\leq 1,|\eta| \geq 1} \frac{d\xi d\eta}{|\xi|^{2H_1-1}|\eta|^{2H_2-1+4b}}+2^{-m(3-4(a+b))} \int_{|\xi|\geq 1,|\eta| \geq 1} \frac{d\xi d\eta}{|\xi|^{2H_1-1+2a}|\eta|^{2H_2-1+4b}}  \bigg\}\\
&\lesssim& 2^{-\ell (\al+1)} \lVert X^\Delta\rVert_{\al;R_{k+1}} \ .
\end{eqnarray*}

\

\noindent
(ii) \textit{Estimation of $B^{2,\ell}_{(s,x)}$.} Using basic changes of variables, we get that
$$B^{2,\ell}_{(s,x)}=\int_{\R^2}\frac{d\xi d\eta}{|\xi|^{2H_1-1} |\eta|^{2H_2-1}} B^{2,\ell}_{(s,x)}(\xi,\eta) \ ,$$
with
\begin{eqnarray*}
\lefteqn{B^{2,\ell}_{(s,x)}(\xi,\eta)}\\
&:= & 2^\ell \int_{\R^2} dt dy \, e^{-\imath \xi t}e^{-\imath \eta y} \psi(t,y) \int_{\R^2}du dz \, e^{-\imath \xi u}e^{-\imath \eta z}K(u,z) X^\Delta\big( K^{(1)}_{(s+2^{-2\ell}t,x+2^{-\ell}y),(s-2^{-2\ell}u,x-2^{-\ell}z)}\big)\\
&=&2^\ell\, \fouri{\psi}(\xi,\eta) \int_{\R^2}du dz \, e^{-\imath \xi u}e^{-\imath \eta z}K(u,z) X^\Delta\big( K^{(1)}_{(s,x),(s-2^{-2\ell}u,x-2^{-\ell}z)}\big)\\
& &-2^\ell \, \fouri{K}(\xi,\eta)  \int_{\R^2} dt dy \, e^{-\imath \xi t}e^{-\imath \eta y} \psi(t,y) X^\Delta\big( K^{(1)}_{(s,x),(s+2^{-2\ell}t,x+2^{-\ell}y)}\big) \\
& =: & B^{2,1,\ell}_{(s,x)}(\xi,\eta)-B^{2,2,\ell}_{(s,x)}(\xi,\eta) \ .
\end{eqnarray*}
Now, on the one hand, it holds by (\ref{estim-x-1-1}) that
$$|B^{2,1,\ell}_{(s,x)}(\xi,\eta)| \lesssim 2^{-\ell (\al+1)} \lVert X^\Delta\rVert_{\al;R_{k+1}} | \fouri{\psi}(\xi,\eta)| \int_{\R^2} du dz \, |K(u,z)| \lesssim 2^{-\ell (\al+1)} \lVert X^\Delta\rVert_{\al;R_{k+1}} | \fouri{\psi}(\xi,\eta)| \ ,$$
and accordingly
\begin{equation}\label{estim-b-2-1}
\int_{\R^2}\frac{d\xi d\eta}{|\xi|^{2H_1-1} |\eta|^{2H_2-1}} \big| B^{2,1,\ell}_{(s,x)}(\xi,\eta) \big| \lesssim 2^{-\ell (\al+1)} \lVert X^\Delta\rVert_{\al;R_{k+1}} \ .
\end{equation}
On the other hand, with the notations of the proof of Proposition \ref{prop-l-x-1} (see (\ref{f-n-l})), we have that
$$B^{2,2,\ell}_{(s,x)}(\xi,\eta)=2^\ell \, \fouri{K}(\xi,\eta) \, \fouri{F^{\ell}_{(s,x)}}(\xi,\eta) \ .$$
Therefore, we are dealing with a very similar quantity as the one defined by (\ref{defi-i-n-l}), and we can rely on the same Cauchy-Schwarz argument (that is, the combination of (\ref{cs-i-n-l}) and (\ref{second-ingred})) to assert that
\begin{equation}\label{estim-b-2-2}
\int_{\R^2}\frac{d\xi d\eta}{|\xi|^{2H_1-1} |\eta|^{2H_2-1}} \big| B^{2,2,\ell}_{(s,x)}(\xi,\eta) \big| \lesssim 2^{-\ell (\al+1)} \lVert X^\Delta\rVert_{\al;R_{k+1}} \ .
\end{equation}

\

\noindent
(ii) \textit{Estimation of $B^{3,\ell}_{(s,x)}$.} In the same spirit as with $B^{2,\ell}_{(s,x)}$, write, with the additional notation $\vp(t,y):=\psi(t,y) \cdot y$,
\begin{eqnarray*}
\lefteqn{B^{3,\ell}_{(s,x)}(\xi,\eta)}\\
&= & 2^\ell \int_{\R^2} dt dy \, e^{-\imath \xi t}e^{-\imath \eta y} \vp(t,y) \int_{\R^2}du dz \, e^{-\imath \xi u}e^{-\imath \eta z}(D^{0,1}K)(u,z) X^\Delta\big( K^{(1)}_{(s+2^{-2\ell}t,x+2^{-\ell}y),(s-2^{-2\ell}u,x-2^{-\ell}z)}\big)\\
&=&2^\ell\, \fouri{\vp}(\xi,\eta) \int_{\R^2}du dz \, e^{-\imath \xi u}e^{-\imath \eta z}(D^{0,1}K)(u,z) X^\Delta\big( K^{(1)}_{(s,x),(s-2^{-2\ell}u,x-2^{-\ell}z)}\big)\\
& &-2^\ell \, \fouri{(D^{0,1}K)}(\xi,\eta)  \int_{\R^2} dt dy \, e^{-\imath \xi t}e^{-\imath \eta y} \vp(t,y) X^\Delta\big( K^{(1)}_{(s,x),(s+2^{-2\ell}t,x+2^{-\ell}y)}\big) \\
& =: & B^{3,1,\ell}_{(s,x)}(\xi,\eta)-B^{3,2,\ell}_{(s,x)}(\xi,\eta) \ .
\end{eqnarray*}
Now,
$$\big|B^{3,1,\ell}_{(s,x)}(\xi,\eta)\big| \lesssim 2^{-\ell (\al+1)} \lVert X^\Delta\rVert_{\al;R_{k+1}} | \fouri{\vp}(\xi,\eta)| \int_{\R^2} du dz \, |(D^{0,1}K)(u,z)| \lesssim 2^{-\ell (\al+1)} \lVert X^\Delta\rVert_{\al;R_{k+1}} | \fouri{\vp}(\xi,\eta)| \ ,$$
and so
\begin{equation}\label{estim-b-3-1}
\int_{\R^2}\frac{d\xi d\eta}{|\xi|^{2H_1-1} |\eta|^{2H_2-1}} \big| B^{3,1,\ell}_{(s,x)}(\xi,\eta) \big| \lesssim 2^{-\ell (\al+1)} \lVert X^\Delta\rVert_{\al;R_{k+1}} \ .
\end{equation}
Besides, with the notations of the proof of Proposition \ref{prop-l-2-x} (see (\ref{g-n-l})), one has
$$B^{3,2,\ell}_{(s,x)}(\xi,\eta)=2^\ell \, \fouri{(D^{0,1}K)}(\xi,\eta) \, \fouri{G^{\ell}_{(s,x)}}(\xi,\eta) \ .$$
So, just as above, we can use the same argument as with the estimation of (\ref{i-n-l-bis}) (that is, the bounds (\ref{cs-i-n-l-bis}) and (\ref{second-ingre-bis})) to get that
\begin{equation}\label{estim-b-3-2}
\int_{\R^2}\frac{d\xi d\eta}{|\xi|^{2H_1-1} |\eta|^{2H_2-1}} \big| B^{3,2,\ell}_{(s,x)}(\xi,\eta) \big| \lesssim 2^{-\ell (\al+1)} \lVert X^\Delta\rVert_{\al;R_{k+1}} \ .
\end{equation}

\smallskip

\noindent
We can then combine the estimates (\ref{estim-b-1})-(\ref{estim-b-2-1})-(\ref{estim-b-2-2})-(\ref{estim-b-3-1})-(\ref{estim-b-3-2}) to conclude that
\begin{equation}\label{esti-b-n-l}
|B^{\ell}_{(s,x)}| \lesssim 2^{-\ell (\al+1)} \lVert X^\Delta\rVert_{\al;R_{k+1}} \ .
\end{equation}

\smallskip

\noindent
The desired bound for $I^{\ell}_{(s,x)}$ is now immediate: injecting (\ref{esti-a-n-l}) and (\ref{esti-b-n-l}) into (\ref{i-itilde}) and (\ref{decompo-itilde}) leads us to
$$\big|I^{\ell}_{(s,x)}\big| \lesssim \lVert X^\Delta\rVert_{\al;R_{k+1}} \, 2^{-\ell (\al+1)} 2^{-\ell (4H_1+2H_2-3-\ep(\al+2))} \ .$$
and by choosing $\ep>0$ small enough, we can finally assert that
$$\big|I^{\ell}_{(s,x)}\big| \lesssim \lVert X^\Delta\rVert_{\al;R_{k+1}} \, 2^{-\ell (3\al+4)} \ .$$

\end{proof}

\begin{lemma}\label{lem:a-1-n}
Let $A^{1,\ell,m}_{(s,x)} (\xi,\eta)$ be the quantity defined through (\ref{a-1-n-l-1})-(\ref{a-1-n-l-2}) (note in particular that $0< m\leq \ell$). Then it holds that
\begin{equation}\label{bound-a-1-n-l-1}
\big| A^{1,\ell,m}_{(s,x)} (\xi,\eta) \big| \lesssim 2^{-(\ell-m) (\al+2)} \lVert X^\Delta\rVert_{\al;R_{k+1}}  \ ,
\end{equation}
and for every $p,q\geq 1$,
\begin{equation}\label{bound-a-1-n-l-2}
\big| A^{1,\ell,m}_{(s,x)} (\xi,\eta) \big| \lesssim 2^{-\ell (\al+2)} \lVert X^\Delta\rVert_{\al;R_{k+1}} \, \inf \bigg( \frac{1}{|\xi|^p},\frac{1}{|\eta|^q}\bigg) \ .
\end{equation}
In both (\ref{bound-a-1-n-l-1}) and (\ref{bound-a-1-n-l-2}), the proportional constant does not depend on $\xi,\eta,\ell,m,k$ and $\psi$.
\end{lemma}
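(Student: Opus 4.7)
My plan is to prove both bounds via a rescaling of the inner variables, followed by either a pointwise $L^\infty$-estimate (for (\ref{bound-a-1-n-l-1})) or a repeated integration-by-parts argument (for (\ref{bound-a-1-n-l-2})).

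The first step is the substitution $u = 2^{2m}\tilde u$, $z = 2^m\tilde z$ inside (\ref{a-1-n-l-1}), whose Jacobian $2^{3m}$ absorbs the prefactor $2^{-3m}$ and forces the rescaled derivative of $K_0$, namely $(D^{(0,1)}K_0)(\tilde u - 2^{-2m}(1-r)t,\tilde z)$, to be supported in a compact region of $(\tilde u,\tilde z)$ uniformly in $\ell, m$. The resulting expression can therefore be written as a Fourier-type integral
\begin{equation*}
A^{1,\ell,m}_{(s,x)}(\xi,\eta) = \iint d\tilde u\, d\tilde z \, e^{-\imath 2^{2m}\xi\tilde u}\, e^{-\imath 2^m\eta\tilde z}\, G^{\ell,m}_{(s,x)}(\tilde u,\tilde z),
\end{equation*}
where $G^{\ell,m}_{(s,x)}$ involves the quantity $X^\Delta(K^{(1)}_{a,b})$ with base points $a = (s+2^{-2\ell}t,x+2^{-\ell}y)$ and $b = (s+2^{-2\ell}t - 2^{2(m-\ell)}\tilde u, x+2^{-\ell}y - 2^{m-\ell}\tilde z)$. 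These two base points are separated, in the parabolic scaling $\scal$, by a distance of order $2^{m-\ell}$.

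For (\ref{bound-a-1-n-l-1}), it suffices to estimate $G^{\ell,m}_{(s,x)}$ pointwise. Applying the deterministic bound (\ref{estim-x-1-1}) of the appendix in the form $|X^\Delta(K^{(1)}_{a,b})| \lesssim \lVert X^\Delta\rVert_{\al;R_{k+1}}\cdot \lVert a-b\rVert_\scal^{\al+2}$ (valid since $(s,x)\in R_k$ and the relevant parameters force $a,b\in R_{k+1}$) yields the factor $2^{-(\ell-m)(\al+2)}$, while the remaining integrations over $(\tilde u,\tilde z,t,y,r)$ all take place on compact sets and produce constants independent of $(\xi,\eta,\ell,m,k,\psi)$, giving the claimed estimate.

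For (\ref{bound-a-1-n-l-2}), I exploit the oscillatory structure: performing $p$ successive integrations by parts in $\tilde u$, respectively $q$ in $\tilde z$, produces the prefactor $(2^{2m}|\xi|)^{-p}$, respectively $(2^m|\eta|)^{-q}$. Each derivative either falls on $(D^{(0,1)}K_0)$, yielding higher derivatives of $K_0$ that remain uniformly bounded and compactly supported after rescaling, or on the quantity $X^\Delta(K^{(1)}_{a,b})$ through the dependence of the second base point $b$ on $(\tilde u,\tilde z)$. Since the singularity of $K$ is fixed at the origin, the map $b \mapsto K^{(1)}_{a,b}$ is smooth, and each differentiation in $\tilde z$ (resp.\ $\tilde u$) produces, by the chain rule, a factor $2^{m-\ell}$ (resp.\ $2^{2(m-\ell)}$) multiplied by a derivative of $K$ tested against $X^\Delta$, still controlled by the deterministic estimates of the appendix. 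A careful bookkeeping of these factors---where the gains $(2^{2m})^{-p}$ and $(2^m)^{-q}$ easily dominate the losses coming from the $X^\Delta(K^{(1)}_{a,b})$-derivatives---produces the claimed bound for every $p,q\geq 1$. The main technical obstacle is precisely this bookkeeping: keeping track of the $\ell$, $m$, $\xi$, $\eta$ scaling factors generated when the derivatives $\partial_{\tilde u}$ or $\partial_{\tilde z}$ cascade through $X^\Delta(K^{(1)}_{a,b})$, the $K_0$-piece being essentially automatic.
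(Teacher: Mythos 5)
Your argument for (\ref{bound-a-1-n-l-1}) is fine and is essentially the paper's own computation (rescale $u=2^{2m}\tilde u$, $z=2^m\tilde z$, note that the base points of $K^{(1)}$ are at parabolic distance $\lesssim 2^{m-\ell}$, apply (\ref{estim-x-1-1}), and integrate over compact sets). The problem lies entirely in your treatment of (\ref{bound-a-1-n-l-2}). When you integrate by parts in $\tilde u$ or $\tilde z$ and let the derivative fall on $X^\Delta\big(K^{(1)}_{a,b}\big)$ through the base point $b$, the chain rule produces the quantities $X^\Delta\big((D^{(0,1)}K)(b-\cdot)\big)$ and $X^\Delta\big((D^{(1,0)}K)(b-\cdot)\big)$. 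These are \emph{not} controlled by the estimates of Appendix \ref{append}: (\ref{estim-x-1-1}) only handles increments $X^\Delta\big(K^{(1)}_{a,b}\big)$, and the bound (\ref{estim-x-2-1-1}) for $(D^{(0,1)}K)(x-\cdot)$ applies to the second-order component, of regularity $2\al+2>-1$, not to a first-order distribution of regularity $\al\in(-\frac32,-\frac43)$. In fact these pairings are divergent in general: writing $D^{(0,1)}K=\sum_{j\ge0}2^{-j}\phi^j$ with $\phi^j$ a normalized scale-$j$ test function, the Besov bound gives contributions of order $2^{j(-1-\al)}\lVert X^\Delta\rVert_{\al}$, which grow geometrically since $\al<-1$ (and the time derivative is worse). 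Equivalently, $\theta^\Delta:=K\ast X^\Delta$ only has parabolic regularity $\al+2\in(\frac12,\frac23)$, so it is not differentiable, and "a derivative of $K$ tested against $X^\Delta$" simply does not exist pointwise. Consequently the subsequent "bookkeeping", in which the gains $(2^{2m})^{-p}$, $(2^m)^{-q}$ are supposed to dominate these losses, has nothing finite to dominate.

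The paper's proof avoids precisely this trap, and the device it uses is the missing idea in your proposal: since the $u$-dependence of the increment enters only through $t-u$ (and the $z$-dependence through $y-z$), one replaces $\partial_u$ by $-\partial_t$ (resp. $\partial_z$ by $-\partial_y$) and integrates by parts in $(t,y)$, so that every derivative lands either on the smooth compactly supported factor $\vp(t,y)=t\,\psi(t,y)$ or on the smooth kernel $K_0$ (whose argument $2^{-2m}(u-t+rt)$ yields an extra harmless factor $2^{-2m}$), and \emph{never} on $\theta^\Delta$. After inserting a suitable $t$-independent reference value of $\theta^\Delta$, only increments of $\theta^\Delta$ remain, i.e. quantities of the form $X^\Delta\big(K^{(1)}_{\cdot,\cdot}\big)$ over parabolic distances $\lesssim 2^{-\ell}$ (or accompanied by the compensating factor $2^{-2m}$), all controlled by (\ref{estim-x-1-1}); this is what produces the factor $2^{-\ell(\al+2)}$ together with $|\xi|^{-p}$, $|\eta|^{-q}$. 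Without this transfer of derivatives onto $\psi$ and $K_0$, your argument for (\ref{bound-a-1-n-l-2}) does not go through.
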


\begin{proof}
Set $\vp(t,y):=t\cdot \psi(t,y)$. For (\ref{bound-a-1-n-l-1}), observe that due to both (\ref{estim-x-1-1}) and support reasons (involving the supports of $\vp$ and $K$), one has
\begin{eqnarray}
\big| A^{1,\ell,m}_{(s,x)} (\xi,\eta) \big|&\lesssim &2^{-\ell (\al+2)}\lVert X^\Delta\rVert_{\al;R_{k+1}}\, 2^{-3m} \int_{|u|\lesssim 2^{2m}}du \int_{|z|\lesssim 2^m} dz \int_{\R^2} dt dy \, |\vp(t,y)| \, \{|u|+z^2\}^{\frac12 (\al+2)}\nonumber\\
& &\hspace{5cm} \int_0^1 dr \,  \big|(D^{0,1}K_0)(2^{-2m}(u-t+rt),2^{-m}z)\big|\nonumber\\
&\lesssim & 2^{-\ell (\al+2)}\lVert X^\Delta\rVert_{\al;R_{k+1}}\, 2^{m(\al+2)} \ .\label{support-arg}
\end{eqnarray}
As far as (\ref{bound-a-1-n-l-2}) is concerned, one has for instance, by setting $\theta^{\Delta}:=K\ast X^\Delta$,
\begin{multline*}
\big| A^{1,\ell,m}_{(s,x)} (\xi,\eta) \big| \lesssim \frac{2^{-3m}}{|\xi|} \int_{\R^2} du dz \int_0^1 dr \bigg| \int_{\R^2} dt dy \, \vp(t,y)\, \partial_u \Big( (D^{1,0}K_0)(2^{-2m}(.-t+rt),2^{-m}z) \\ \big\{ \theta^{\Delta}(s+2^{-2\ell} (t-.),x+2^{-\ell}(y-z))-\theta^{\Delta}(s+2^{-2\ell} t,x+2^{-\ell}y)\big\} \Big)(u,z) \bigg| \ .
\end{multline*}
Then note that the integral with respect to $(t,y)$ can be decomposed as
\begin{align*}
&2^{-2m} \int_{\R^2} dtdy \, \vp(t,y) \, (D^{2,0}K_0)(2^{-2m}(u-t+rt),2^{-m}z) \\
&\hspace{2cm} \big\{ \theta^{\Delta}(s+2^{-2\ell} (t-u),x+2^{-\ell}(y-z))-\theta^{\Delta}(s+2^{-2\ell} t,x+2^{-\ell}y)\big\}\\
& -\int_{\R^2} dtdy \, \vp(t,y) \, (D^{1,0}K_0)(2^{-2m}(u-t+rt),2^{-m}z) \\
&\hspace{2cm} \partial_t \big\{ \theta^{\Delta}(s+2^{-2\ell} (.-u),x+2^{-\ell}(y-z))-\theta^{\Delta}(s-2^{-2\ell} u,x+2^{-\ell}z)\big\}(t,y)\\
&=\ 2^{-2m} \int_{\R^2} dtdy \, \vp(t,y) \, (D^{2,0}K_0)(2^{-2m}(u-t+rt),2^{-m}z) \\
&\hspace{2cm} \big\{ \theta^{\Delta}(s+2^{-2\ell} (t-u),x+2^{-\ell}(y-z))-\theta^{\Delta}(s+2^{-2\ell} t,x+2^{-\ell}y)\big\}\\
& +\int_{\R^2} dtdy \, (\partial_t \vp)(t,y) \, (D^{1,0}K_0)(2^{-2m}(u-t+rt),2^{-m}z) \\
&\hspace{2cm}  \big\{ \theta^{\Delta}(s+2^{-2\ell} (t-u),x+2^{-\ell}(y-z))-\theta^{\Delta}(s-2^{-2\ell} u,x+2^{-\ell}z)\big\}\\
& +2^{-2m}(r-1)\int_{\R^2} dtdy \, \vp(t,y) \, (D^{2,0}K_0)(2^{-2m}(u-t+rt),2^{-m}z) \\
&\hspace{2cm}  \big\{ \theta^{\Delta}(s+2^{-2\ell} (t-u),x+2^{-\ell}(y-z))-\theta^{\Delta}(s-2^{-2\ell} u,x+2^{-\ell}z)\big\} \,\\
\end{align*}
which, by (\ref{estim-x-1-1}) and for the same support reasons as in (\ref{support-arg}), easily leads us to the expected bound:
$$\big| A^{1,\ell,m}_{(s,x)} (\xi,\eta) \big| \lesssim \frac{2^{-\ell (\al+2)}}{|\xi|} \lVert X^\Delta\rVert_{\al;R_{k+1}} \ .$$
The other estimates contained in (\ref{bound-a-1-n-l-2}) can then be derived from the same integration-by-parts arguments.

\end{proof}

\begin{lemma}\label{lem:n-n-l}
Let $N^{\ell}_{(s,x)}$ be the function defined by (\ref{n-l-l}). Then for every $p,q \geq 0$ such that $1\leq p+q\leq 4$, one has
$$\sup_{(u,z)\in [-1,1]^2} \big| \partial^p_u \partial_z^q\big( N^{\ell}_{(s,x)}\big)(u,z)\big| \lesssim 2^{-\ell (\al+2)} \lVert X^\Delta\rVert_{\al;R_{k+1}} \ ,$$
where the proportional constant does not depend on $\ell,k$ and $\psi$. In particular, as $ N^{\ell}_{(s,x)}(0,0)=0$, one has for every $(u,z)\in [-1,1]^2$,
$$\big| N^{\ell}_{(s,x)}(u,z)\big| \lesssim 2^{-\ell (\al+2)} \lVert X^\Delta\rVert_{\al;R_{k+1}}\, \{|u|+|z| \}  \ .$$
\end{lemma}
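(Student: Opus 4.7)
The plan is to differentiate under the integral and transfer the $(u,z)$-derivatives into $(t,y)$-derivatives acting on $\psi$, so that the leftover $G$-integral can be closed by the first-order Hölder estimate (\ref{estim-x-1-1}). Setting $G(a,b):=\langle X^\Delta, K((a,b)-\cdot)\rangle$, the integrand of (\ref{n-l-l}) reads
\[
F(t,y,u,z):=G\big(s+2^{-2\ell}(t-u),\, x+2^{-\ell}(y-z)\big)-G\big(s+2^{-2\ell}t,\, x+2^{-\ell}y\big) \ .
\]
For $p+q\geq 1$, any derivative $\partial_u^p\partial_z^q$ annihilates the second (constant in $(u,z)$) summand, and a direct chain-rule computation on the first summand yields the key symmetry
\[
\partial_u^p\partial_z^q F(t,y,u,z)=(-1)^{p+q}\,\partial_t^p\partial_y^q\big[G\big(s+2^{-2\ell}(t-u),\, x+2^{-\ell}(y-z)\big)\big] \ .
\]

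Next I would integrate by parts in $(t,y)$ against the compactly supported $\psi$; the IBP sign $(-1)^{p+q}$ cancels the previous one, giving
\[
\partial_u^p\partial_z^q N^\ell_{(s,x)}(u,z)=\iint dt\,dy\,(\partial_t^p\partial_y^q\psi)(t,y)\; G\big(s+2^{-2\ell}(t-u),\, x+2^{-\ell}(y-z)\big) \ .
\]
Because $\iint \partial_t^p\partial_y^q\psi=0$ for $p+q\geq 1$, I am free to subtract the $(t,y)$-constant value $G(s-2^{-2\ell}u,\, x-2^{-\ell}z)$ under the integral, which recovers a genuine $K^{(1)}$-increment
\[
G\big(s+2^{-2\ell}(t-u),\, x+2^{-\ell}(y-z)\big)-G\big(s-2^{-2\ell}u,\, x-2^{-\ell}z\big)
=\big\langle X^\Delta,\, K^{(1)}_{(s-2^{-2\ell}u,\, x-2^{-\ell}z),\,(s+2^{-2\ell}(t-u),\, x+2^{-\ell}(y-z))}\big\rangle \ .
\]

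Estimate (\ref{estim-x-1-1}) then bounds this increment by $\lVert X^\Delta\rVert_{\al;R_{k+1}}\, 2^{-\ell(\al+2)}(|t|+|y|^2)^{(\al+2)/2}$, since for $(s,x)\in R_k$ and $|u|,|z|\leq 1$ both base points lie in $R_{k+1}$, and the parabolic distance between them equals $2^{-\ell}(|t|+|y|^2)^{1/2}$. Because $\psi\in\cb$ has derivatives bounded by $1$ up to order $4$ (which is precisely what forces the constraint $p+q\leq 4$) and compact support, the outer integral $\iint|(\partial_t^p\partial_y^q\psi)(t,y)|\,(|t|+|y|^2)^{(\al+2)/2}\,dt\,dy$ is a finite universal constant, yielding the first claim. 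The last assertion then follows from $K^{(1)}_{P,P}\equiv 0$, which gives $N^\ell_{(s,x)}(0,0)=0$, together with the one-line Taylor expansion $N^\ell_{(s,x)}(u,z)=\int_0^1\big[u\,\partial_u N^\ell_{(s,x)}+z\,\partial_z N^\ell_{(s,x)}\big](ru,rz)\,dr$ and the case $p+q=1$ of the previous step.

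The main obstacle to watch out for is precisely this chain-rule symmetry in the first step: a naive distributional pairing $\langle X^\Delta, D^{(p,q)}K((a,b)-\cdot)\rangle$ via the dyadic decomposition (\ref{decompo-k}) of $K$ would generate a divergent sum over scales as soon as $p+q\geq 1$ (since $\al<-\tfrac{4}{3}$ makes $2p+q-2-\al\geq 0$). The integration-by-parts trick, combined with the mean-zero property $\iint\partial_t^p\partial_y^q\psi=0$, is what converts this apparent derivative loss back into the favourable Hölder-regularity of the $K^{(1)}$-increments, which is how the target scaling $2^{-\ell(\al+2)}$ is finally obtained.
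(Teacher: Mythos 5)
Your proposal is correct and follows essentially the same route as the paper: rewrite the integrand via $\theta^{\Delta}:=K\ast X^\Delta$, convert the $(u,z)$-derivatives into $(t,y)$-derivatives by the chain-rule symmetry, integrate by parts onto $\psi$, and reinsert a constant reference value (the paper places it directly inside the derivative, you recover it from $\iint\partial_t^p\partial_y^q\psi=0$ — the same thing) before applying (\ref{estim-x-1-1}). The concluding Taylor argument for the second claim also matches the paper's.
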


\begin{proof}
Set $\theta^{\Delta}:=K\ast X^\Delta$. Then we have
\begin{eqnarray*}
\lefteqn{\big| \partial^p_u \partial_z^q\big( N^{\ell}_{(s,x)}\big)(u,z)\big|}\\
&=& \bigg|\int_{\R^2} dt dy \, \psi(t,y)\, \partial_u^p\partial_z^q \big( \theta^{\Delta}(s+2^{-2\ell}(t-.),x+2^{-\ell} (y-.)-\theta^{\Delta}(s+2^{-2\ell}t,x+2^{-\ell} y)\big)(u,z)\bigg|\\
&=& \bigg| \int_{\R^2} dt dy \, \psi(t,y)\, \partial_t^p\partial_y^q \big( \theta^{\Delta}(s+2^{-2\ell}(.-u),x+2^{-\ell} (.-z)-\theta^{\Delta}(s-2^{-2\ell}u,x-2^{-\ell} z)\big)(t,y)\bigg|\\
&=& \bigg| \int_{\R^2} dt dy \, (\partial_t^p\partial_y^q \psi)(t,y)\,  \big\{ \theta^{\Delta}(s+2^{-2\ell}(t-u),x+2^{-\ell} (y-z))-\theta^{\Delta}(s-2^{-2\ell}u,x-2^{-\ell} z)\big\}\bigg|
\end{eqnarray*}
and so, by (\ref{estim-x-1-1}),
$$
\sup_{(u,z)\in [-1,1]^2}  \big| \partial^p_u \partial_z^q\big( N^{\ell}_{(s,x)}\big)(u,z)\big| \lesssim  2^{-\ell (\al+2)} \lVert \theta^{\Delta}\rVert_{\al+2;R_{k+1}} \int_{\R^2} dt dy \, \big| (\partial_t^p \partial_y^q \psi)(t,y)\big| \lesssim  2^{-\ell (\al+2)} \lVert X^\Delta\rVert_{\al;R_{k+1}} \ . 
$$
\end{proof}

\subsection{Convergence in the deterministic chaos}

\begin{proposition}
For every $\al \in (-\frac32,-3+2H_1+H_2)$, there exists $\ep>0$ such that for all $n,\ell\geq 0$, $k\geq 1$, $(s,x)\in R_k$ and $\psi\in \cb$, one has
\begin{equation}\label{estim-l}
\big|\big\{ L^{n}_{(s,x)}-L_{(s,x)}\big\}(\psi_{(s,x)}^\ell) \big| \lesssim 2^{-\ell(2\al+2)} 2^{-n\ep} \ ,
\end{equation}
where the proportional constant is independent of $(n,k,\ell)$ and $\psi$. 
\end{proposition}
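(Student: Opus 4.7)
The proof is a simpler, fully deterministic variant of the treatment of the term $II^{n,\ell}_{(s,x)}$ appearing in Proposition \ref{prop-l-x-1}; in particular, no random field enters, which is why the bound carries no power of $k$.

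First, using the scaling identity $\mathcal{F}(\psi^\ell_{(s,x)})(\xi,\eta) = e^{-\imath\xi s}e^{-\imath\eta x}\mathcal{F}(\psi)(2^{-2\ell}\xi, 2^{-\ell}\eta)$, the phase factors appearing in the definitions of $L_{(s,x)}$ and $L^n_{(s,x)}$ cancel, and the difference reduces to
$$\{L^n_{(s,x)} - L_{(s,x)}\}(\psi^\ell_{(s,x)}) = c_{H_1,H_2}^2 \iint_{\R^2 \setminus D_n} \frac{\mathcal{F}(K)(\xi,\eta)\,\mathcal{F}(\psi)(2^{-2\ell}\xi, 2^{-\ell}\eta)}{|\xi|^{2H_1-1}|\eta|^{2H_2-1}} \, d\xi d\eta.$$
To extract a gain $2^{-n\ep}$, I would use $\R^2\setminus D_n \subset \{|\xi|>2^{2n}\}\cup\{|\eta|>2^n\}$: on $\{|\xi|>2^{2n}\}$ write $|\xi|^{-(2H_1-1)} \leq 2^{-n\ep}|\xi|^{-(2H_1-1-\ep/2)}$, and symmetrically on $\{|\eta|>2^n\}$. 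For $\ep>0$ small enough, the strict assumption $\al<-3+2H_1+H_2$ remains valid under these slightly perturbed Hurst parameters.

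The problem then reduces to bounding, for Hurst parameters in an admissible range, the unrestricted integral by $2^{-\ell(2\al+2)}$. For this I would apply a Cauchy-Schwarz splitting exactly as in the treatment of $I^\ell_{(s,x)}$ in Proposition \ref{prop-l-x-1}, inserting weights $|\xi|^{-\la_1/2}|\eta|^{-\la_2/2}$: the first factor contains $|\mathcal{F}(K)|^2$ and is controlled by Lemma \ref{lem:estim-k-hat}, while the second factor contains $|\mathcal{F}(\psi)(2^{-2\ell}\cdot,2^{-\ell}\cdot)|^2$ and, after the change of variables $\tilde\xi=2^{-2\ell}\xi$, $\tilde\eta=2^{-\ell}\eta$, produces the explicit factor $2^{\ell(3-2\la_1-\la_2)/2}$ times a finite $\psi$-dependent constant (the finiteness requires only $\la_1,\la_2\in(0,1)$ together with the smoothness and compact support of $\psi$, both of which are guaranteed by $\psi\in\cb$).

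To conclude, one picks $\al_0\in(\al,-3+2H_1+H_2)$ and sets $2\la_1+\la_2=7+4\al_0$, which yields $2^{\ell(3-2\la_1-\la_2)/2}=2^{-\ell(2\al_0+2)}\leq 2^{-\ell(2\al+2)}$. The only technical point is that this choice of $2\la_1+\la_2$ must be compatible with the integrability conditions of Lemma \ref{lem:estim-k-hat}, but this reduces to $7+4\al_0<8H_1+4H_2-5$, i.e. $\al_0<-3+2H_1+H_2$, which is built into the choice. The verification of the existence of admissible parameters $(\la_1,\la_2,\al_0)$ proceeds exactly as in the combinatorial step carried out just after \eqref{param-cs-2} in Proposition \ref{prop-l-x-1}, and constitutes the main (but mild) obstacle of the argument.
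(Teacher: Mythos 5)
Your proof is correct and follows essentially the same route as the paper's: a Cauchy--Schwarz splitting with the parameters $(\la_1,\la_2,\al_0)$ of (\ref{param-cs-1})--(\ref{param-cs-2}), Lemma \ref{lem:estim-k-hat} for the $\fouri{K}$ factor, the parabolic scaling of $\fouri{\psi^\ell_{(s,x)}}$ to produce $2^{-\ell(2\al+2)}$, and the slight perturbation of the Hurst exponents on $\R^2\setminus\cd_n$ to extract $2^{-n\ep}$. You merely spell out the last step, which the paper leaves implicit in the phrase ``for $\ep>0$ small enough''.
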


\begin{proof}
It is again a basic Cauchy-Schwartz argument. In fact, pick $(\la_1,\la_2,\al_0)$ exactly as in (\ref{param-cs-1})-(\ref{param-cs-2}) and observe that with this choice, one has
\begin{eqnarray*}
\lefteqn{\big|L_{(s,x)}(\psi_{(s,x)}^\ell) \big|}\\
&\leq& \bigg(\iint_{\R^2} \frac{d\xi d\eta}{|\xi|^{4H_1-2-\la_1} |\eta|^{4H_2-2-\la_2}}|\fouri{K}(\xi,\eta)|^2 \bigg)^{\frac12} \bigg(\iint_{\R^2} \frac{d\xi d\eta}{|\xi|^{\la_1} |\eta|^{\la_2}}|\fouri{\psi_{(0,0)}^\ell}(\xi,\eta)|^2 \bigg)^{\frac12} \\
&\leq& 2^{-\ell (2\al+2)} \bigg(\iint_{\R^2} \frac{d\xi d\eta}{|\xi|^{4H_1-2-\la_1} |\eta|^{4H_2-2-\la_2}}|\fouri{K}(\xi,\eta)|^2 \bigg)^{\frac12} \bigg(\iint_{\R^2} \frac{d\xi d\eta}{|\xi|^{\la_1} |\eta|^{\la_2}}|\fouri{\psi}(\xi,\eta)|^2 \bigg)^{\frac12} \ .
\end{eqnarray*}
The argument showing that the latter bound is finite is then the same as for (\ref{fourier-k-finie}). With these estimations in mind, the proof of (\ref{estim-l}) (for $\ep >0$ small enough) is immediate. 
\end{proof}

\

\appendix
\section{A few deterministic estimates at first and second orders}\label{append}

We collect here a few useful estimates related to the interactions between a given $K$-rough path (more precisely, its first and second-order components) and a localized heat kernel (in the sense of Definition \ref{defi:loc-heat-ker}). Just as in Section \ref{sec:main-results}, we fix a coefficient $\al\in (-\frac32,-\frac43)$, as well as a localized heat kernel $K$. We recall that the notations $K^{(1)}$ and $K^{(2)}$ are then defined through (\ref{defi-k-un}) and (\ref{defi-k-deux}), respectively. Also, for every $R>0$, we set $I_R:=[-R,R]^2$.

\smallskip

Let us start with a basic result applying to the first-order component of a $K$-rough path, that is to a general $\cac^\al(\R^2)$ distribution (the proof of this estimate can be found for instance in \cite[Lemma 2.2]{deya}).
\begin{lemma}
Let $X \in \cac_c^\al(\R^2)$. Then, for every $R>0$ and every $x=(x_0,x_1),y=(y_0,y_1)\in I_R$ such that $x-y\in [-2,2]^2$, one has
\begin{equation}\label{estim-x-1-1}
\big| X\big(K^{(1)}_{x,y}\big)\big| \lesssim \lVert X\rVert_{\al;I_R}  \cdot \lVert x-y\rVert_{\scal}^{\al+2} \ .
\end{equation}
\end{lemma}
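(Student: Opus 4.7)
The plan is to exploit the dyadic decomposition (\ref{decompo-k}) of $K$ in order to reduce the estimate to a sum of pairings of $X$ against rescaled copies of the smooth bump $K_0$. Setting $\tilde K_0(w_0,w_1):=K_0(-w_0,-w_1)$ and unfolding (\ref{decompo-k}) yields
\begin{equation*}
K(x-z)=\sum_{\ell\geq 0}2^\ell \tilde K_0(2^{2\ell}(z_0-x_0),2^\ell(z_1-x_1))=\sum_{\ell\geq 0}2^{-2\ell}(\tilde K_0)^\ell_x(z),
\end{equation*}
and consequently
\begin{equation*}
X(K^{(1)}_{x,y})=\sum_{\ell\geq 0}2^{-2\ell}\big\{\langle X,(\tilde K_0)^\ell_y\rangle-\langle X,(\tilde K_0)^\ell_x\rangle\big\}.
\end{equation*}
Since $\tilde K_0$ and its first-order derivatives $\partial_i\tilde K_0$ are smooth, supported in $[-1,1]^2$ and have derivatives bounded up to order $4$, some fixed constant multiple of each of them belongs to $\cb$. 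By Definition \ref{def:besov-space}, and enlarging $I_R$ slightly to absorb the $\ell$-uniform supports of the $(\tilde K_0)^\ell_\bullet$ with center in $I_R$, this delivers the basic testing bound $|\langle X,(\tilde K_0)^\ell_z\rangle|\lesssim \lVert X\rVert_{\al;I_R}\,2^{-\al\ell}$, and similarly with $\tilde K_0$ replaced by any $\partial_i\tilde K_0$.

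Then I would split the dyadic sum at the threshold $\ell_0\geq 0$ characterised by $2^{-(\ell_0+1)}<\lVert x-y\rVert_\scal\leq 2^{-\ell_0}$. On the \emph{fine} side $\ell\geq \ell_0$, the crude bound on each summand together with $\al+2>0$ (which follows from $\al>-\tfrac32$) yields
\begin{equation*}
\sum_{\ell\geq \ell_0}2^{-2\ell}\cdot \lVert X\rVert_{\al;I_R}\,2^{-\al\ell}\lesssim \lVert X\rVert_{\al;I_R}\,2^{-(\al+2)\ell_0}\sim \lVert X\rVert_{\al;I_R}\lVert x-y\rVert_\scal^{\al+2}.
\end{equation*}
On the \emph{coarse} side $\ell<\ell_0$ the trivial estimate is too weak and I would exploit cancellation through the Taylor-in-the-center identity
\begin{equation*}
(\tilde K_0)^\ell_y(z)-(\tilde K_0)^\ell_x(z)=-\!\int_0^1\!\big\{2^{2\ell}(y_0-x_0)(\partial_1\tilde K_0)^\ell_{z_t}(z)+2^\ell(y_1-x_1)(\partial_2\tilde K_0)^\ell_{z_t}(z)\big\}\,dt,
\end{equation*}
with $z_t:=x+t(y-x)$. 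Testing against $X$ and using that, for $\ell\leq \ell_0$, one has $2^\ell \lVert x-y\rVert_\scal\leq 1$, hence $2^{2\ell}|y_0-x_0|+2^\ell|y_1-x_1|\lesssim 2^\ell\lVert x-y\rVert_\scal$, produces a summand of size $\lesssim 2^{-(\al+1)\ell}\lVert x-y\rVert_\scal\,\lVert X\rVert_{\al;I_R}$. Since $\al+1<0$ (which follows from $\al<-\tfrac43$), the geometric sum over $\ell<\ell_0$ is controlled by its largest term, giving $\lesssim \lVert X\rVert_{\al;I_R}\lVert x-y\rVert_\scal\cdot 2^{-(\al+1)\ell_0}\sim \lVert X\rVert_{\al;I_R}\lVert x-y\rVert_\scal^{\al+2}$. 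Summing the two regimes closes the proof.

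The main obstacle is essentially bookkeeping: verifying that each rescaled $\partial^\beta\tilde K_0$ (for $|\beta|\leq 1$) lies in $\cb$ up to a uniform multiplicative constant depending only on $K_0$, and tracking the mild enlargement of $I_R$ induced by the supports of the $(\tilde K_0)^\ell_\bullet$. The one genuinely structural point is that \emph{both} halves of the dyadic sum must be summable, which is precisely what the ambient range $\al\in(-\tfrac32,-\tfrac43)\subset(-2,-1)$ guarantees via the twin conditions $\al+2>0$ and $\al+1<0$.
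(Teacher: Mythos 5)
Your proof is correct and follows essentially the same route the paper relies on: the paper itself defers this first-order estimate to \cite[Lemma 2.2]{deya}, but the strategy there (and in the paper's own Appendix proofs of the second-order analogues, e.g.\ Lemma \ref{lem:x-2-1}) is exactly your dyadic decomposition of $K$ split at the scale $2^{-\ell_0}\sim \lVert x-y\rVert_{\scal}$, with crude testing bounds on the fine scales and a Taylor/cancellation argument on the coarse scales, the two regimes being summable thanks to $\al+2>0$ and $\al+1<0$.
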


\smallskip

Let us now turn to second-order considerations and fix two paths
$$\mathbf{X}=(\mathbf{X}^{\mathbf{1}},\mathbf{X}^{\mathbf{2}}) \ , \ \mathbf{Y}=(\mathbf{Y}^{\mathbf{1}},\mathbf{Y}^{\mathbf{2}}) \ \in \cac_c^\al(\R^2) \times \pmb{\cac}_c^{2\al+2}(\R^2) $$
that satisfy the $K$-Chen relation (\ref{k-chen-relation}).

\begin{lemma}\label{lem:x-2-1}
For every $R>0$ and every $x=(x_0,x_1),y=(y_0,y_1)\in I_R$ such that $x-y\in [-2,2]^2$, one has
\begin{equation}\label{estim-x-2-1}
\big| \mathbf{X}_{x}^{\mathbf{2}}\big(K^{(2)}_{x,y}\big)\big| \lesssim \big\{ \lVert \mathbf{X}^{\mathbf{2}}\rVert_{2\al+2;I_R} +\lVert \mathbf{X}^{\mathbf{1}} \rVert_{\al;I_R}^2\big\} \cdot \lVert x-y\rVert_{\scal}^{2\al+4} \ ,
\end{equation}
and
\begin{multline}\label{estim-x-2-1-diff}
\big| \big\{ \mathbf{X}_{x}^{\mathbf{2}}-\mathbf{Y}_{x}^{\mathbf{2}}\big\}\big(K^{(2)}_{x,y}\big)\big|\\
 \lesssim \big\{ \lVert \mathbf{X}^{\mathbf{2}}-\mathbf{Y}^{\mathbf{2}}\rVert_{2\al+2;I_R}+\lVert \mathbf{X}^{\mathbf{1}}-\mathbf{Y}^{\mathbf{1}}\rVert_{\al;I_R} \lVert \mathbf{X}^{\mathbf{1}} \rVert_{\al;I_R}+\lVert \mathbf{Y}^{\mathbf{1}}\rVert_{\al;I_R} \lVert \mathbf{X}^{\mathbf{1}}-\mathbf{Y}^{\mathbf{1}}\rVert_{\al;I_R}\big\} \cdot \lVert x-y\rVert_{\scal}^{2\al+4} \ .
\end{multline}
Moreover,
\begin{equation}\label{estim-x-2-1-1}
\big| \mathbf{X}_{x}^{\mathbf{2}}\big( (D^{0,1}K)(x-.)\big)-\mathbf{X}_{y}^{\mathbf{2}}\big( (D^{0,1}K)(y-.)\big) \big| \lesssim  \big\{ \lVert \mathbf{X}^{\mathbf{2}}\rVert_{2\al+2;I_R} +\lVert \mathbf{X}^{\mathbf{1}} \rVert_{\al;I_R}^2\big\}  \cdot \lVert x-y \rVert_{\scal}^{2\al+3} \ .
\end{equation}
\end{lemma}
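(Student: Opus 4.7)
The proof strategy rests on the dyadic decomposition $K = \sum_{\ell\geq 0}K^{(\ell)}$ with $K^{(\ell)}(z):=2^{\ell}K_{0}(2^{2\ell}z_{0},2^{\ell}z_{1})$ from Definition \ref{defi:loc-heat-ker}. Up to reflection and rescaling, each $K^{(\ell)}(x-\cdot)$ equals $2^{-2\ell}$ times a scaled test function of the form $\vp^{\ell}_{x}$, while $(D^{(0,1)}K^{(\ell)})(x-\cdot)$ equals $2^{-\ell}$ times such a test function. Consequently, the Besov-type bounds of Definitions \ref{def:besov-space}--\ref{def:besov-space-2} immediately give $|\mathbf{X}^{\mathbf{1}}(K^{(\ell)}(y-\cdot))|\lesssim 2^{-\ell(\al+2)}\lVert\mathbf{X}^{\mathbf{1}}\rVert_{\al;I_R}$, $|\mathbf{X}^{\mathbf{2}}_{y}(K^{(\ell)}(y-\cdot))|\lesssim 2^{-\ell(2\al+4)}\lVert\mathbf{X}^{\mathbf{2}}\rVert_{2\al+2;I_R}$ and $|\mathbf{X}^{\mathbf{2}}_{y}((D^{(0,1)}K^{(\ell)})(y-\cdot))|\lesssim 2^{-\ell(2\al+3)}\lVert\mathbf{X}^{\mathbf{2}}\rVert_{2\al+2;I_R}$; mismatched centers (e.g., $\mathbf{X}^{\mathbf{2}}_{x}$ paired against $K^{(\ell)}(y-\cdot)$) are handled by recentering through the Chen relation (\ref{k-chen-relation}). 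Fix throughout a threshold $\ell_{0}$ with $2^{-\ell_{0}}\sim \lVert x-y\rVert_{\scal}$ and split every dyadic sum there: high frequencies $\ell\geq \ell_{0}$ are treated by the triangle inequality (plus Chen), and low frequencies $\ell\leq \ell_{0}$ by exploiting the smoothness of $K^{(\ell)}$ through parabolic Taylor expansions.

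For (\ref{estim-x-2-1}), the high-frequency portion produces three geometric series (one per summand of $K^{(2),\ell}_{x,y}$); the middle one uses Chen to rewrite $\mathbf{X}^{\mathbf{2}}_{x}(K^{(\ell)}(y-\cdot)) = \mathbf{X}^{\mathbf{2}}_{y}(K^{(\ell)}(y-\cdot)) + \mathbf{X}^{\mathbf{1}}(K^{(1)}_{x,y})\mathbf{X}^{\mathbf{1}}(K^{(\ell)}(y-\cdot))$, invoking (\ref{estim-x-1-1}) to supply the small factor $\lVert x-y\rVert_{\scal}^{\al+2}$, and this is where the $\lVert\mathbf{X}^{\mathbf{1}}\rVert^{2}$ term on the right of (\ref{estim-x-2-1}) appears. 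The three series converge over $\ell\geq \ell_{0}$ precisely under $2\al+4>0$, $\al+2>0$ and $2\al+3>0$, all ensured by $\al>-3/2$, and each produces $\lVert x-y\rVert_{\scal}^{2\al+4}$. For the low-frequency portion, Taylor's theorem applied at $x$ to $w\mapsto K^{(\ell)}(w-z)$ shows that $K^{(2),\ell}_{x,y}(z)$ equals $(y_{0}-x_{0})(D^{(1,0)}K^{(\ell)})(x-z)$ plus a second-order remainder involving only derivatives $D^{(1,1)}$, $D^{(0,2)}$, $D^{(2,0)}$; in parabolic scaling, each contribution is dominated in $L^{\infty}$ by $(\lVert x-y\rVert_{\scal}\cdot 2^{\ell})^{2}$ times a scaled test function centered near $x$, so pairing with $\mathbf{X}^{\mathbf{2}}_{x}$ yields $|\mathbf{X}^{\mathbf{2}}_{x}(K^{(2),\ell}_{x,y})|\lesssim \lVert x-y\rVert_{\scal}^{2}\cdot 2^{-\ell(2\al+2)}\lVert\mathbf{X}^{\mathbf{2}}\rVert_{2\al+2;I_R}$. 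Summing over $0\leq \ell\leq \ell_{0}$ (using $2\al+2<0$) gives $\lVert x-y\rVert_{\scal}^{2}\cdot\lVert x-y\rVert_{\scal}^{2\al+2}=\lVert x-y\rVert_{\scal}^{2\al+4}$, as required. The difference bound (\ref{estim-x-2-1-diff}) follows from running the exact same scheme on $\mathbf{X}-\mathbf{Y}$, observing that $\mathbf{X}^{\mathbf{2}}-\mathbf{Y}^{\mathbf{2}}$ satisfies a bilinearized Chen relation with cross term $\mathbf{X}^{\mathbf{1}}(K^{(1)}_{x,y})\cdot(\mathbf{X}^{\mathbf{1}}-\mathbf{Y}^{\mathbf{1}})+(\mathbf{X}^{\mathbf{1}}-\mathbf{Y}^{\mathbf{1}})(K^{(1)}_{x,y})\cdot \mathbf{Y}^{\mathbf{1}}$, which accounts exactly for the three norms on the right-hand side.

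For (\ref{estim-x-2-1-1}), which lacks any $K^{(2)}$-type cancellation, dyadically decompose the difference and split again at $\ell_{0}$. For $\ell\geq \ell_{0}$, apply the triangle inequality directly to the two $\mathbf{X}^{\mathbf{2}}$-terms (no Chen needed), getting a summand of size $2^{-\ell(2\al+3)}$ whose geometric tail is $\sim 2^{-\ell_{0}(2\al+3)}=\lVert x-y\rVert_{\scal}^{2\al+3}$. This step is the most delicate point of the proof: convergence of this high-frequency tail hinges precisely on $2\al+3>0$, i.e.\ exactly on the subsection's standing hypothesis $\al>-3/2$. For $\ell\leq \ell_{0}$, Chen's relation rewrites
\[
\mathbf{X}^{\mathbf{2}}_{x}((D^{(0,1)}K^{(\ell)})(x-\cdot))-\mathbf{X}^{\mathbf{2}}_{y}((D^{(0,1)}K^{(\ell)})(y-\cdot))=\mathbf{X}^{\mathbf{2}}_{x}(\Delta_{\ell})+\mathbf{X}^{\mathbf{1}}(K^{(1)}_{x,y})\cdot\mathbf{X}^{\mathbf{1}}((D^{(0,1)}K^{(\ell)})(y-\cdot))
\]
with $\Delta_{\ell}:=(D^{(0,1)}K^{(\ell)})(x-\cdot)-(D^{(0,1)}K^{(\ell)})(y-\cdot)$. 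A first-order parabolic Taylor expansion of the shift $x\to y$ bounds $\mathbf{X}^{\mathbf{2}}_{x}(\Delta_{\ell})$ by $\lVert x-y\rVert_{\scal}\cdot 2^{-\ell(2\al+2)}$, while the Chen correction is $\lesssim \lVert x-y\rVert_{\scal}^{\al+2}\cdot 2^{-\ell(\al+1)}$ by (\ref{estim-x-1-1}); since both exponents $2\al+2$ and $\al+1$ are negative, each sum over $\ell\leq \ell_{0}$ is dominated by its top term and contributes $\sim \lVert x-y\rVert_{\scal}^{2\al+3}$, closing the proof.
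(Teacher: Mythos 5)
Your proof is correct and follows essentially the same route as the paper's: dyadic decomposition $K=\sum_\ell K_\ell$, a split of the sum at the scale $\lVert x-y\rVert_{\scal}$, the Chen relation combined with the Besov bounds at high frequencies, and parabolic Taylor expansions at low frequencies, with the same exponent bookkeeping (in particular the use of $2\al+4>0$, $2\al+3>0$, $\al+2>0$ above the threshold and $2\al+2<0$ below it). It is worth noting that the paper omits the proof of (\ref{estim-x-2-1-1}) altogether (``similar strategy''), and your argument for it --- triangle inequality at high frequencies, which indeed hinges on $2\al+3>0$, then Chen plus a first-order parabolic Taylor expansion at low frequencies --- is a correct completion of that omitted step.
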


\begin{proof}
We follow a similar strategy as in the proof of \cite[Lemma 2.2]{deya}. Namely, with expansion (\ref{decompo-k}) in mind, we write
$$
\mathbf{X}^{\mathbf{2}}_{x}\big(K^{(2)}_{x,y}\big) = \sum_{\ell\geq 0}\mathbf{X}^{\mathbf{2}}_{x}\big(K^{(2),\ell}_{x,y}\big) \ ,
$$
where $K^{(2),\ell}_{x,y}$ stands for the expression obtained by replacing each occurence of $K$ with $K_\ell:=2^\ell K_0(2^{2\ell}.,2^\ell .)$ in $K^{(2)}_{x,y}$. Then pick $i\geq 0$ such that $3 \cdot 2^{-(i+1)} \leq \lVert x-y\rVert_\scal \leq 3\cdot 2^{-i}$. For $\ell>i$, we use (\ref{k-chen-relation}) to derive that
$$
\mathbf{X}^{\mathbf{2}}_{x}\big(K^{(2),\ell}_{x,y}\big)=\Big[ \mathbf{X}_{y}^{\mathbf{2}}\big(K_\ell(y-)\big)+\mathbf{X}^{\mathbf{1}}\big( K^{(1)}_{x,y}\big) \mathbf{X}^{\mathbf{1}}\big( K_\ell(y-.)\big) \Big]
-\mathbf{X}^{\mathbf{2}}_{x}\big( K_\ell(x-.)\big)-(y_1-x_1)\,\mathbf{X}^{\mathbf{2}}_{x}\big( (D^{0,1}K_\ell)(x-.)\big) \ .
$$
Therefore, by (\ref{estim-x-1-1}), we can assert that
$$
\big| \mathbf{X}^{\mathbf{2}}_{x}\big(K^{(2),\ell}_{x,y}\big) \big| \lesssim \big\{ \lVert \mathbf{X}^{\mathbf{2}}\rVert_{2\al+2;I_R} + \lVert \mathbf{X}^{\mathbf{1}} \rVert_{\al;I_R}^2\big\}\,
\big\{ 2^{-\ell(2\al+4)}+ |y_1-x_1|\cdot 2^{-\ell(2\al+3)}+\lVert x-y\rVert_\scal^{\al+2}\cdot 2^{-\ell(\al+2)} \big\} \ ,
$$
and as a result
$$
\sum_{\ell>i} \big| \mathbf{X}^{\mathbf{2}}_{x}\big(K^{(2),\ell}_{x,y}\big) \big| \lesssim \big\{ \lVert \mathbf{X}^{\mathbf{2}}\rVert_{2\al+2;I_R} + \lVert \mathbf{X}^{\mathbf{1}} \rVert_{\al;I_R}^2\big\} \, \lVert x-y\rVert_\scal^{2\al+4} \ .
$$
On the other hand, for $0\leq \ell \leq i$, write
\begin{multline*}
\mathbf{X}^{\mathbf{2}}_{x}\big(K^{(2),\ell}_{x,y}\big)=(y_0-x_0) \, \int_0^1 dr \, \mathbf{X}^{\mathbf{2}}_{x} \big( (D^{1,0}K_\ell)(x_0+r(y_0-x_0)-.,y_1-.)\big) \\
+(y_1-x_1)^2\, \int_0^1 dr \, r\int_0^1 dr' \, \, \mathbf{X}^{\mathbf{2}}_{x} \big( (D^{0,2}K_\ell)(x_0-.,x_1+rr'(y_1-x_1)-.)\big)\ ,
\end{multline*}
and so, using (\ref{k-chen-relation}) and (\ref{estim-x-1-1}) just as above, we get that
\begin{multline*}
\big| \mathbf{X}^{\mathbf{2}}_{x}\big(K^{(2),\ell}_{x,y}\big) \big| \lesssim \big\{ \lVert \mathbf{X}^{\mathbf{2}}\rVert_{2\al+2;I_R} + \lVert \mathbf{X}^{\mathbf{1}} \rVert_{\al;I_R}^2\big\}\cdot\\
\big\{ |y_0-x_0| \cdot 2^{-\ell(2\al+2)}+|y_0-x_0| \cdot \lVert x-y\rVert_\scal^{\al+2} \cdot 2^{-\ell\al}+|y_1-x_1|^2 \cdot 2^{-\ell(2\al+2)}+|y_1-x_1|^{\al+4} \cdot 2^{-\ell\al} \big\} \ .
\end{multline*}
Consequently,
$$
\sum_{0\leq \ell\leq i} \big| \mathbf{X}^{\mathbf{2}}_{x}\big(K^{(2),\ell}_{x,y}\big) \big| \lesssim \big\{ \lVert \mathbf{X}^{\mathbf{2}}\rVert_{2\al+2;I_R} + \lVert \mathbf{X}^{\mathbf{1}} \rVert_{\al;I_R}^2\big\} \, \lVert x-y \rVert_\scal^{2\al+4} \ ,
$$
which achieves the proof of (\ref{estim-x-2-1}). It is then clear that (\ref{estim-x-2-1-diff}) can be shown along the very same lines.

\smallskip

\noindent
The proof of (\ref{estim-x-2-1-1}) also relies on a similar strategy and therefore we omit it for the sake of conciseness. 
\end{proof}

\begin{lemma}\label{lem:x-2-2-time}
For all $R>0$, $x=(x_0,x_1)\in I_R$, $y=(y_0,y_1)\in [-1,1]^2$ and $z_0\in [-1,1]$, it holds that
\begin{multline}\label{estim-x-2-2-time}
\big| \mathbf{X}^{\mathbf{2}}_{x}\big(K^{(2)}_{x,(x_0+y_0,x_1+y_1)}-K^{(2)}_{x,(x_0+z_0,x_1+y_1)}\big)\big|\\
 \lesssim \big\{ \lVert \mathbf{X}^{\mathbf{2}}\rVert_{2\al+2;I_{R+1}} + \lVert \mathbf{X}^{\mathbf{1}} \rVert_{\al;I_{R+1}}^2\big\} \cdot \big\{ |y_0-z_0|^{\al+2}+\lVert y\rVert_\scal^{\al+2} \, |y_0-z_0|^{\frac12 (\al+2)} \big\} \ ,
\end{multline}
and
\begin{multline}\label{estim-x-2-2-time-diff}
\big| \big\{\mathbf{X}_{x}^{\mathbf{2}}-\mathbf{Y}_{x}^{\mathbf{2}}\big\}\big(K^{(2)}_{x,(x_0+y_0,x_1+y_1)}-K^{(2)}_{x,(x_0+z_0,x_1+y_1)}\big)\big|\\
\lesssim \big\{ \lVert \mathbf{X}^{\mathbf{2}}-\mathbf{Y}^{\mathbf{2}}\rVert_{2\al+2;I_{R+1}}+\lVert \mathbf{X}^{\mathbf{1}}-\mathbf{Y}^{\mathbf{1}}\rVert_{\al;I_{R+1}} \lVert \mathbf{X}^{\mathbf{1}} \rVert_{\al;I_{R+1}}+\lVert  \mathbf{Y}^{\mathbf{1}}\rVert_{\al;I_{R+1}} \lVert \mathbf{X}^{\mathbf{1}}-\mathbf{Y}^{\mathbf{1}} \rVert_{\al;I_R}\big\} \\
\cdot \big\{ |y_0-z_0|^{\al+2}+\lVert y\rVert_\scal^{\al+2} \, |y_0-z_0|^{\frac12 (\al+2)} \big\} \ .
\end{multline}
\end{lemma}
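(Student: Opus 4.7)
The strategy mirrors that of Lemma~\ref{lem:x-2-1}. Using the decomposition \eqref{decompo-k}, I write $K=\sum_\ell K_\ell$ with $K_\ell(w):=2^\ell K_0(2^{2\ell}w_0,2^\ell w_1)$ and expand
$$\mathbf{X}^{\mathbf{2}}_x\big(K^{(2)}_{x,y}-K^{(2)}_{x,y'}\big)=\sum_{\ell\geq 0}\mathbf{X}^{\mathbf{2}}_x\big(K^{(2),\ell}_{x,y}-K^{(2),\ell}_{x,y'}\big),$$
where I set $y:=(x_0+y_0,x_1+y_1)$ and $y':=(x_0+z_0,x_1+y_1)$. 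The key point is that $y$ and $y'$ share the same second coordinate, so the $(y_1-x_1)$-correction term from \eqref{defi-k-deux} cancels and each summand reduces to $\mathbf{X}^{\mathbf{2}}_x(K_\ell(y-\cdot)-K_\ell(y'-\cdot))$. I then introduce the separation scale $j\geq 0$ defined by $3\cdot 2^{-(j+1)}\leq |y_0-z_0|^{1/2}\leq 3\cdot 2^{-j}$ and split the sum at $\ell=j$.

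For the fine scales $\ell\geq j$, I bound each of $\mathbf{X}^{\mathbf{2}}_x(K_\ell(y-\cdot))$ and $\mathbf{X}^{\mathbf{2}}_x(K_\ell(y'-\cdot))$ separately via the Chen relation \eqref{k-chen-relation}. Writing $K_\ell(y-\cdot)$ as (a normalising constant times) $2^{-2\ell}\psi^\ell_y$ for a suitable $\psi\in\cb$ gives $|\mathbf{X}^{\mathbf{2}}_y(K_\ell(y-\cdot))|\lesssim 2^{-(2\al+4)\ell}\lVert\mathbf{X}^{\mathbf{2}}\rVert_{2\al+2;I_{R+1}}$ and $|\mathbf{X}^{\mathbf{1}}(K_\ell(y-\cdot))|\lesssim 2^{-(\al+2)\ell}\lVert\mathbf{X}^{\mathbf{1}}\rVert_{\al;I_{R+1}}$; combined with \eqref{estim-x-1-1} and the elementary inequality $\lVert x-y'\rVert_\scal\lesssim\lVert y\rVert_\scal+|y_0-z_0|^{1/2}$, the two geometric sums over $\ell\geq j$ (convergent thanks to $2\al+4>0$ and $\al+2>0$) yield contributions of order $|y_0-z_0|^{\al+2}+\lVert y\rVert_\scal^{\al+2}|y_0-z_0|^{(\al+2)/2}$.

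For the coarse scales $\ell<j$, a first-order Taylor expansion in the time coordinate gives $K_\ell(y-w)-K_\ell(y'-w)=(y_0-z_0)\int_0^1(D^{(1,0)}K_\ell)(y'(r)-w)\,dr$, with $y'(r):=(x_0+z_0+r(y_0-z_0),x_1+y_1)$. Now $(D^{(1,0)}K_\ell)(y'(r)-\cdot)$ is proportional to $\psi^\ell_{y'(r)}$ for some $\psi\in\cb$ (the extra time derivative exactly offsets the prefactor $2^{-2\ell}$ of the previous paragraph). Chen combined with \eqref{estim-x-1-1} and $\lVert x-y'(r)\rVert_\scal\lesssim\lVert y\rVert_\scal+|y_0-z_0|^{1/2}$ produces
$$\big|\mathbf{X}^{\mathbf{2}}_x\big((D^{(1,0)}K_\ell)(y'(r)-\cdot)\big)\big|\lesssim\big(\lVert\mathbf{X}^{\mathbf{2}}\rVert+\lVert\mathbf{X}^{\mathbf{1}}\rVert^2\big)\big\{2^{-(2\al+2)\ell}+(\lVert y\rVert_\scal+|y_0-z_0|^{1/2})^{\al+2}\,2^{-\al\ell}\big\}.$$
Multiplying by $|y_0-z_0|\sim 2^{-2j}$ and summing for $\ell<j$ (each series being dominated by its top index since $2\al+2<0$ and $\al<0$) yields $|y_0-z_0|^{\al+2}+(\lVert y\rVert_\scal+|y_0-z_0|^{1/2})^{\al+2}|y_0-z_0|^{(\al+2)/2}$; distributing the power $\al+2>0$ in the second term finally produces the two advertised summands.

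The main technical obstacle is the simultaneous presence of two natural parabolic scales --- the scale $\lVert y\rVert_\scal$ of the distance from the base point $x$ to the testing center, and the scale $|y_0-z_0|^{1/2}$ of the increment $y-y'$ --- which is what forces the intermediate splitting at level $j$ and the systematic use of the triangle bound $\lVert x-y'\rVert_\scal\lesssim\lVert y\rVert_\scal+|y_0-z_0|^{1/2}$. The difference estimate \eqref{estim-x-2-2-time-diff} is obtained by running the same argument on the pair $(\mathbf{X}^{\mathbf{1}}-\mathbf{Y}^{\mathbf{1}},\mathbf{X}^{\mathbf{2}}-\mathbf{Y}^{\mathbf{2}})$: the Chen relation applied at the level of these increments generates exactly the three-summand structure $\lVert\mathbf{X}^{\mathbf{2}}-\mathbf{Y}^{\mathbf{2}}\rVert+\lVert\mathbf{X}^{\mathbf{1}}-\mathbf{Y}^{\mathbf{1}}\rVert\lVert\mathbf{X}^{\mathbf{1}}\rVert+\lVert\mathbf{Y}^{\mathbf{1}}\rVert\lVert\mathbf{X}^{\mathbf{1}}-\mathbf{Y}^{\mathbf{1}}\rVert$ appearing on the right-hand side, while the scale-by-scale analysis is identical.
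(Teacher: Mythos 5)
Your argument is correct, but it takes a noticeably longer road than the one the paper actually follows. You both start from the same key observation — since $(x_0+y_0,x_1+y_1)$ and $(x_0+z_0,x_1+y_1)$ share the same spatial coordinate, the $(y_1-x_1)$-correction in (\ref{defi-k-deux}) cancels and the difference of kernels reduces to a first-order increment $K^{(1)}$ between these two points. At that stage the paper simply applies the Chen relation (\ref{k-chen-relation}) \emph{once}, to move the base point of $\mathbf{X}^{\mathbf{2}}$ from $x$ to $(x_0+y_0,x_1+y_1)$: the recentered term is then exactly of the form $\mathbf{X}^{\mathbf{2}}_{a}\big(K^{(2)}_{a,b}\big)$ with $a_1=b_1$ and $\lVert a-b\rVert_\scal=|y_0-z_0|^{1/2}$, so the already-established bound (\ref{estim-x-2-1}) gives the $|y_0-z_0|^{\al+2}$ contribution, while the cross term $\mathbf{X}^{\mathbf{1}}\big(K^{(1)}_{x,x+y}\big)\,\mathbf{X}^{\mathbf{1}}\big(K^{(1)}_{b,a}\big)$ is controlled by two applications of (\ref{estim-x-1-1}) and yields $\lVert y\rVert_\scal^{\al+2}|y_0-z_0|^{(\al+2)/2}$ — two lines in total. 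You instead rerun the whole dyadic machinery of Lemma \ref{lem:x-2-1} from scratch on the increment: splitting at the scale $j$ with $2^{-j}\sim|y_0-z_0|^{1/2}$, summing the fine scales via Chen at each of the two centers, and Taylor-expanding in time at the coarse scales. Your scale-by-scale bounds, the identification of $(D^{(1,0)}K_\ell)(y'(r)-\cdot)$ as a genuine $\psi^\ell$ test function, the triangle bound $\lVert x-y'\rVert_\scal\lesssim\lVert y\rVert_\scal+|y_0-z_0|^{1/2}$ together with subadditivity of $u\mapsto u^{\al+2}$, and the convergence conditions on the geometric series all check out, and the difference estimate (\ref{estim-x-2-2-time-diff}) follows by the same bilinearity argument in both approaches. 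What your route buys is self-containedness; what it costs is that you duplicate, in the time direction, work that Lemma \ref{lem:x-2-1} has already done and that a single base-point shift makes directly available.
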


\begin{proof}
One has trivially
$$
K^{(2)}_{x,(x_0+y_0,x_1+y_1)}-K^{(2)}_{x,(x_0+z_0,x_1+y_1)}=K^{(2)}_{(x_0+y_0,x_1+y_1),(x_0+z_0,x_1+y_1)}=K^{(1)}_{(x_0+y_0,x_1+y_1),(x_0+z_0,x_1+y_1)} \ ,
$$
and so, by (\ref{k-chen-relation}), we get the identity
\begin{multline*}
\mathbf{X}^{\mathbf{2}}_{x}\big(K^{(2)}_{x,(x_0+y_0,x_1+y_1)}-K^{(2)}_{x,(x_0+z_0,x_1+y_1)}\big)\\
=\mathbf{X}_{(x_0+y_0,x_1+y_1)}^{\mathbf{2}}\big(K^{(2)}_{(x_0+y_0,x_1+y_1),(x_0+z_0,x_1+y_1)}\big)+\mathbf{X}^{\mathbf{1}}\big(K^{(1)}_{x,x+y}\big) \, \mathbf{X}^{\mathbf{1}}\big(K^{(1)}_{(x_0+y_0,x_1+y_1),(x_0+z_0,x_1+y_1)}\big) \ .
\end{multline*}
The desired estimate (\ref{estim-x-2-2-time}) now follows from (\ref{estim-x-1-1}) and (\ref{estim-x-2-1}). The very same arguments can be applied in order to derive (\ref{estim-x-2-2-time-diff}).

\end{proof}

\begin{lemma}\label{lem:x-2-2}
For all $R>0$, $x=(x_0,x_1)\in I_R$, $y=(y_0,y_1)\in [-1,1]^2$ and $z_1\in [-1,1]$, it holds that
\begin{multline}\label{estim-x-2-2}
\big| \mathbf{X}_{x}^{\mathbf{2}}\big(K^{(2)}_{x,(x_0+y_0,x_1+y_1)}-K^{(2)}_{x,(x_0+y_0,x_1+z_1)}\big)\big|\\
 \lesssim \big\{ \lVert \mathbf{X}^{\mathbf{2}}\rVert_{2\al+2;I_{R+1}} + \lVert \mathbf{X}^{\mathbf{1}} \rVert_{\al;I_{R+1}}^2\big\} \cdot \big\{ |y_1-z_1|^{2\al+4}+|y_1-z_1| \cdot \lVert y\rVert_\scal^{2\al+3}+|y_1-z_1|^{\al+2} \cdot \lVert y\rVert_\scal^{\al+2}\big\} \ ,
\end{multline}
and
\begin{multline}\label{estim-x-2-2-diff}
\big| \big\{\mathbf{X}_{x}^{\mathbf{2}}-\mathbf{Y}_{x}^{\mathbf{2}}\big\}\big(K^{(2)}_{x,(x_0+y_0,x_1+y_1)}-K^{(2)}_{x,(x_0+y_0,x_1+z_1)}\big)\big|\\
\lesssim \big\{ \lVert \mathbf{X}^{\mathbf{2}}-\mathbf{Y}^{\mathbf{2}}\rVert_{2\al+2;I_{R+1}}+\lVert \mathbf{X}^{\mathbf{1}}-\mathbf{Y}^{\mathbf{1}}\rVert_{\al;I_{R+1}} \lVert \mathbf{X}^{\mathbf{1}} \rVert_{\al;I_{R+1}}+\lVert  \mathbf{Y}^{\mathbf{1}}\rVert_{\al;I_{R+1}} \lVert \mathbf{X}^{\mathbf{1}}-\mathbf{Y}^{\mathbf{1}} \rVert_{\al;I_{R+1}}\big\} \cdot\\
\big\{ |y_1-z_1|^{2\al+4}+|y_1-z_1| \cdot \lVert y\rVert_\scal^{2\al+3}+|y_1-z_1|^{\al+2} \cdot \lVert y\rVert_\scal^{\al+2}\big\} \ .
\end{multline}

\end{lemma}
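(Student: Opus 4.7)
My plan mirrors the approach used for Lemma \ref{lem:x-2-2-time}, but the spatial-increment situation requires a more delicate algebraic step because the $(D^{(0,1)}K)$-correction in $K^{(2)}$ no longer cancels. Setting $\bar{y}:=(x_0+y_0,x_1+y_1)$ and $\bar{y}':=(x_0+y_0,x_1+z_1)$, so that $\lVert \bar{y}-\bar{y}'\rVert_{\scal}=|y_1-z_1|$, a direct computation from (\ref{defi-k-deux}) yields
\begin{equation*}
K^{(2)}_{x,\bar{y}}-K^{(2)}_{x,\bar{y}'}=K^{(2)}_{\bar{y}',\bar{y}}+(y_1-z_1)\big\{(D^{(0,1)}K)(\bar{y}'-\cdot)-(D^{(0,1)}K)(x-\cdot)\big\}\ .
\end{equation*}
The key point is that $\mathbf{X}^{\mathbf{2}}_{x}$ applied to either summand on the right is a priori ill-defined (neither test-function singularity matches the basepoint $x$), yet the specific combination is legitimate.

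Next I apply Chen's relation (\ref{k-chen-relation}) in the form $\mathbf{X}^{\mathbf{2}}_{x}=\mathbf{X}^{\mathbf{2}}_{\bar{y}'}+\mathbf{X}^{\mathbf{1}}(K^{(1)}_{x,\bar{y}'})\mathbf{X}^{\mathbf{1}}$ to both terms. The resulting cross contributions $\mathbf{X}^{\mathbf{1}}(K^{(2)}_{\bar{y}',\bar{y}})$ and $(y_1-z_1)\mathbf{X}^{\mathbf{1}}((D^{(0,1)}K)(\bar{y}'-\cdot))$ are once more each individually ill-defined, but they recombine, via the identity $K^{(2)}_{\bar{y}',\bar{y}}+(y_1-z_1)(D^{(0,1)}K)(\bar{y}'-\cdot)=K^{(1)}_{\bar{y}',\bar{y}}$, into the legitimate quantity $\mathbf{X}^{\mathbf{1}}(K^{(1)}_{\bar{y}',\bar{y}})$. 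This delivers the tractable decomposition
\begin{equation*}
\mathbf{X}^{\mathbf{2}}_{x}\big(K^{(2)}_{x,\bar{y}}-K^{(2)}_{x,\bar{y}'}\big)=\mathbf{X}^{\mathbf{2}}_{\bar{y}'}(K^{(2)}_{\bar{y}',\bar{y}})+(y_1-z_1)\Delta+\mathbf{X}^{\mathbf{1}}(K^{(1)}_{x,\bar{y}'})\,\mathbf{X}^{\mathbf{1}}(K^{(1)}_{\bar{y}',\bar{y}})\ ,
\end{equation*}
where $\Delta:=\mathbf{X}^{\mathbf{2}}_{\bar{y}'}((D^{(0,1)}K)(\bar{y}'-\cdot))-\mathbf{X}^{\mathbf{2}}_{x}((D^{(0,1)}K)(x-\cdot))$.

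The three terms are now controlled, in order, by (\ref{estim-x-2-1}), (\ref{estim-x-2-1-1}), and two applications of (\ref{estim-x-1-1}), yielding bounds of magnitude $|y_1-z_1|^{2\al+4}$, $|y_1-z_1|\lVert x-\bar{y}'\rVert_{\scal}^{2\al+3}$, and $\lVert x-\bar{y}'\rVert_{\scal}^{\al+2}|y_1-z_1|^{\al+2}$, all multiplied by the expected $\{\lVert \mathbf{X}^{\mathbf{2}}\rVert_{2\al+2;I_{R+1}}+\lVert \mathbf{X}^{\mathbf{1}}\rVert_{\al;I_{R+1}}^{2}\}$ prefactor. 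Since $\lVert x-\bar{y}'\rVert_{\scal}\lesssim \lVert y\rVert_{\scal}+|y_1-z_1|$ and both exponents $2\al+3$ and $\al+2$ lie in $(0,1]$ under the assumption $-\tfrac{3}{2}<\al\leq-\tfrac{4}{3}$, the subadditivity $(a+b)^{\gamma}\lesssim a^{\gamma}+b^{\gamma}$ converts these contributions into precisely the three summands of (\ref{estim-x-2-2}). The difference estimate (\ref{estim-x-2-2-diff}) follows by applying the same decomposition to $\mathbf{X}^{\mathbf{2}}-\mathbf{Y}^{\mathbf{2}}$, using (\ref{estim-x-2-1-diff}) in place of (\ref{estim-x-2-1}), the natural difference counterpart of (\ref{estim-x-2-1-1}) (obtained along the same scale-decomposition argument as in Lemma \ref{lem:x-2-1}), and the bilinear identity $ab-a'b'=(a-a')b+a'(b-b')$ for the product term.

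The step I expect to be the main obstacle is the algebraic cancellation exploited in the second paragraph. Without recognising that the pair $\mathbf{X}^{\mathbf{1}}(K^{(2)}_{\bar{y}',\bar{y}})+(y_1-z_1)\mathbf{X}^{\mathbf{1}}((D^{(0,1)}K)(\bar{y}'-\cdot))$ reassembles into the well-defined quantity $\mathbf{X}^{\mathbf{1}}(K^{(1)}_{\bar{y}',\bar{y}})$, a scale-by-scale treatment of the two pieces separately produces non-summable contributions of order $2^{-\ell(\al+1)}$ coming from the derivative in $(D^{(0,1)}K)$, because $\al+1<0$ throughout the roughness range $\al\in(-\tfrac{3}{2},-\tfrac{4}{3}]$ under consideration.
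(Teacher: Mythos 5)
Your proof is correct and takes essentially the same route as the paper: the same $K$-Chen decomposition of the spatial increment into a term controlled by (\ref{estim-x-2-1}), a product of first-order pairings controlled by (\ref{estim-x-1-1}), and the increment of $\mathbf{X}^{\mathbf{2}}$ tested against the shifted kernels $(D^{(0,1)}K)$, which the paper re-estimates scale by scale (its quantity $V_{x,y}$) while you quote (\ref{estim-x-2-1-1}) directly — the same mathematics either way. The only other difference is cosmetic: you anchor the Chen relation at $(x_0+y_0,x_1+z_1)$ rather than at $x+y$, which costs the harmless subadditivity step replacing $\lVert x-(x_0+y_0,x_1+z_1)\rVert_\scal$ by $\lVert y\rVert_\scal+|y_1-z_1|$ but still lands within the stated bound.
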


\begin{proof}
We will rely on the following readily-checked identity:
\begin{multline*}
\mathbf{X}_{x}^{\mathbf{2}}\big(K^{(2)}_{x,(x_0+y_0,x_1+z_1)}-K^{(2)}_{x,(x_0+y_0,x_1+y_1)}\big)=\mathbf{X}_{x+y}^{\mathbf{2}}\big(K^{(2)}_{x+y,(x_0+y_0,x_1+z_1)}\big)\\
+\mathbf{X}^{\mathbf{1}}\big(K^{(1)}_{x,x+y}\big)\, \mathbf{X}^{\mathbf{1}} \big( K^{(1)}_{x+y,(x_0+y_0,x_1+z_1)}\big)+(z_1-y_1)\cdot V_{x,y}\ ,
\end{multline*}
where
$$
V_{x,y}:= \mathbf{X}^{\mathbf{2}}_{x}\big((D^{0,1}K)(x+y-.)-(D^{0,1}K)(x-.)\big)
-\mathbf{X}^{\mathbf{1}}\big(K^{(1)}_{x,x+y}\big) \, \mathbf{X}^{\mathbf{1}} \big( (D^{0,1}K)(x+y-.)\big) \ .
$$
Using (\ref{estim-x-1-1}), resp. (\ref{estim-x-2-1}), we get that
$$
\big|\mathbf{X}^{\mathbf{1}}\big(K^{(1)}_{x,x+y}\big)\, \mathbf{X}^{\mathbf{1}} \big( K^{(1)}_{x+y,(x_0+y_0,x_1+z_1)}\big)\big| \lesssim \lVert \mathbf{X}^{\mathbf{1}} \rVert_{\al;I_{R+1}}^2\cdot \lVert y\rVert_\scal^{\al+2}\cdot |y_1-z_1|^{\al+2} \ ,
$$
resp.
$$
\big|\mathbf{X}_{x+y}^{\mathbf{2}}\big(K^{(2)}_{x+y,(x_0+y_0,x_1+z_1)}\big) \big| 
\lesssim \big\{ \lVert \mathbf{X}^{\mathbf{2}}\rVert_{2\al+2;I_{R+1}} + \lVert \mathbf{X}^{\mathbf{1}} \rVert_{\al;I_{R+1}}^2\big\} \cdot |y_1-z_1|^{2\al+4} \ .
$$
Therefore, the proof reduces to the estimation of $V_{x,y}$. To this end, and in the same vein as in the proof of Lemma \ref{lem:x-2-1}, pick $i\geq 0$ such that $2^{-i} \leq \lVert y\rVert_\scal \leq 2^{-(i-1)}$. Also, with expansion (\ref{decompo-k}) in mind, denote by $V^{\ell}_{x,y}$ the expression obtained by replacing each occurence of $K$ with $K_\ell:=2^\ell K_0(2^{2\ell}.,2^\ell .)$ in $V_{x,y}$. For $\ell> i$, let us use (\ref{k-chen-relation}) in order to write $V^{\ell}_{x,y}$ as
$$V^{\ell}_{x,y}=\mathbf{X}_{x+y}^{\mathbf{2}}\big((D^{0,1}K_\ell)(x+y-.) \big)-\mathbf{X}^{\mathbf{2}}_{x}\big((D^{0,1}K_\ell)(x-.) \big) \ ,$$
which immediately entails that $\big|V^{\ell}_{x,y} \big| \lesssim \lVert \mathbf{X}^{\mathbf{2}} \rVert_{2\al+2;I_{R+1}}\, 2^{-\ell(2\al+3)}$, and so
\begin{equation}
\sum_{\ell> i} \big|V^{\ell}_{x,y} \big| \lesssim \lVert \mathbf{X}^{\mathbf{2}} \rVert_{2\al+2;I_{R+1}} \, \lVert y\rVert_\scal^{2\al+3} \ .
\end{equation}
For $0\leq \ell\leq i$, observe on the one hand that
\begin{equation}\label{proof-x-2-1}
\big| \mathbf{X}^{\mathbf{1}} \big( (D^{0,1}K_\ell)(x+y-.)\big)\big| \lesssim \lVert \mathbf{X}^{\mathbf{1}}\rVert_{\al;I_{R+1}} \, 2^{-\ell(\al+1)} \ .
\end{equation}
On the other hand, combining basic Taylor expansions with (\ref{k-chen-relation}) gives us that
\begin{eqnarray}
\lefteqn{\big|\mathbf{X}^{\mathbf{2}}_{x}\big((D^{0,1}K_\ell)(x+y-.)-(D^{0,1}K_\ell)(x-.)\big) \big|}\nonumber\\
&\leq & |y_0| \int_0^1 dr\, \Big\{ \big| \mathbf{X}_{(x_0+ry_0,x_1+y_1)}^{\mathbf{2}}\big((D^{1,1}K_\ell)(x_0+ry_0-.,x_1+y_1-.)\big)  \big|\nonumber\\
& &\hspace{3cm}+\big| \mathbf{X}^{\mathbf{1}}\big( K^{(1)}_{x,(x_0+ry_0,x_1+y_1)}\big)\big| \, \big| \mathbf{X}^{\mathbf{1}}\big((D^{1,1}K_\ell)(x_0+ry_0-.,x_1+y_1-.)\big) \big| \Big\}\nonumber\\
& & +|y_1| \int_0^1 dr\, \Big\{ \big| \mathbf{X}_{(x_0,x_1+ry_1)}^{\mathbf{2}}\big((D^{0,2}K_\ell)(x_0-.,x_1+ry_1-.)\big)  \big|\nonumber\\
& &\hspace{3cm}+\big| \mathbf{X}^{\mathbf{1}}\big( K^{(1)}_{x,(x_0,x_1+ry_1)}\big)\big| \, \big| \mathbf{X}^{\mathbf{1}}\big((D^{0,2}K_\ell)(x_0-.,x_1+ry_1-.)\big) \big| \Big\}\nonumber\\
&\lesssim& \big\{ \lVert \mathbf{X}^{\mathbf{2}}\rVert_{2\al+2;I_{R+1}} + \lVert \mathbf{X}^{\mathbf{1}} \rVert_{\al;I_{R+1}}^2\big\}\cdot  \big\{ |y_0|\cdot \big[ 2^{-\ell(2\al+1)} +\lVert y \rVert_\scal^{\al+2}\cdot 2^{-\ell(\al-1)}\big]\nonumber\\
& & \hspace{7cm}+|y_1| \cdot \big[ 2^{-\ell(2\al+2)}+\lVert y\rVert_\scal^{\al+2}\cdot 2^{-\ell\al} \big] \big\}\ .\label{proof-x-2-2}
\end{eqnarray} 
Putting together (\ref{proof-x-2-1}) and (\ref{proof-x-2-2}) easily provides us with the desired estimate, that is
$$\sum_{0\leq \ell\leq i} \big|V^{\ell}_{x,y} \big| \lesssim \big\{ \lVert \mathbf{X}^{\mathbf{2}}\rVert_{2\al+2;I_{R+1}} + \lVert \mathbf{X}^{\mathbf{1}} \rVert_{\al;I_{R+1}}^2\big\} \cdot \lVert y \rVert_\scal^{2\al+3} \ ,$$
which achieves the proof of (\ref{estim-x-2-2}). A similar strategy can then be implemented towards (\ref{estim-x-2-2-diff}).

\end{proof}

\

\bigskip

\end{document}